\theoremstyle{plain}
\newtheorem{theo}{Theorem}[section]
\newtheorem{cor}[theo]{Corollary}
\newtheorem{prop}[theo]{Proposition}
\numberwithin{equation}{section}
\theoremstyle{definition}
\newtheorem{remark}[theo]{Remark}
\newcommand{\bequ}{\begin{equation}}
\newcommand{\eequ}{\end{equation}}
\newcommand{\bali}{\begin{align}}
\newcommand{\eali}{\end{align}}
\def\e{\varepsilon}
\def\vf{\varphi}
\def\b{\beta}
\def\d{\delta}
\def\D{\Delta}
\def\G{\Gamma}
\def\g{\gamma}
\def\k{\kappa}
\def\LL{\mathcal L}
\def\pp{\partial}
\def\l{\lambda}
\def\s{\sigma}
\def\x{\times}
\def \R{\mathbb R}
\def \N{{\mathbb N}}
\def\E{\mathbb E}
\def \Z{\mathbb Z}
\def \H{\mathbb H}
\def \P{\mathbb P}
\def\ov{\overline}
\def\un{\underline}
\def\U{\underbar}
\def\om{\omega}
\def\Om{\Omega}
\def\U{\mathcal U}
\def\C{\mathbb C}
\def\W{\mathcal W}
\def\wh{\widehat}
\def\wt{\widetilde}
\def\({\biggl(}
\def\){\biggr)}
\def\<{\bold\langle}
\def\>{\bold\rangle}
\def\LL{{{\mathcal L}}}
\def\M{\widetilde {M}}
\def\vol{{\rm{Vol}}}
\newcommand\AR[1]{\makebox[0pt][l]{$#1$}\kern0.5em\raisebox{1.5ex}{$\curvearrowleft$}} %arrow
\begin{document}
\title[Entropies for negatively curved manifolds]{Entropies for negatively curved manifolds}
\author{Fran\c cois Ledrappier  and  Lin Shu}
\address{Fran\c cois Ledrappier,  Sorbonne Universit\'e, UMR 8001, LPSM, Bo\^{i}te Courrier 158, 4, Place Jussieu, 75252 PARIS cedex
05, France}\email{fledrapp@nd.edu}
\address{Lin Shu,  LMAM, School
of Mathematical Sciences, Peking University, Beijing 100871,
People's Republic of China} \email{lshu@math.pku.edu.cn}
\subjclass[2010]{37D40, 58J65} \keywords{Entropy, stable diffusions}
%\date{}

\thanks{The second author was  partially  supported by  NSFC (No.11331007 and No.11422104).}

\maketitle

This is a survey of several notions of entropy related to a compact manifold of negative curvature and of some relations between them. Namely, let $(M, g)$ be a $C^ \infty$ compact boundaryless Riemannian connected manifold with negative curvature. After recalling the basic definitions, we will define and state the first properties of 
\begin{enumerate}
\item the volume entropy $V$,
\item the dynamical entropies  of the geodesic flow, in particular the entropy $H$ of the Liouville measure and the topological entropy (which coincides with $V$), 
\item the stochastic entropy $h_\rho$ of a family of (biased) diffusions related to the stable foliation of the geodesic flow, \item the relative dynamical entropy of natural  stochastic flows representing the (biased) diffusions.
\end{enumerate}

Most of the results in this survey are not new, some are classical, and we apologize in advance for any inaccuracy in the attributions. New observations are Theorems  \ref{cute} and \ref{cuter}, but the main goal of this survey is to present together related notions that are spread out in the literature. In particular, we are interested in the different so-called rigidity results and problems that (aim to) characterize locally symmetric spaces among negatively curved manifolds by equalities in general entropy inequalities.

These notes grew out from lectures delivered by the second author in the workshop {\it {Probabilistic methods in negative curvature}} in ICTS, Bengaluru, India, and we thank Riddhipratim Basu, Anish Ghosh and Mahan Mj for giving us this opportunity. We also thank  Nalini Anantharaman, 
Ashkan Nikeghbali  for organizing  the 2nd  Strasbourg/Zurich Meeting on {\it {Frontiers in Analysis and Probability}} and Michail Rassias for allowing us to publish these notes that have only a loose connection with the talk of the first author there.

\

\section{Local symmetry and volume growth}

 Let $(M, g)$ be a $C^ \infty$ compact boundaryless connected $d$-dimensional Riemannian manifold and for $u,v$ vector fields on $M$ we denote $\nabla _u v$ the covariant derivative of $v$ in the direction of $u$. Given $u,v \in T_xM$, the {\it {curvature tensor $R$}}
   associates to a vector $w \in T_x M$ the vector $R(u,v) w $ given by 
\[ R(u,v)w \; = \; \nabla _u\nabla _v w -\nabla _v\nabla _u w - \nabla _{[u,v]} w . \]
The space $(M, g)$ is called {\it{locally symmetric}} if $\nabla R=0$.

Consider the case $(M, g)$ has {\it{negative sectional curvature}}, i.e.,  for  all  non colinear $u,v \in T_x M,$ $x\in \M$, the  {\it sectional curvature  $\displaystyle K(u,v):= \frac{<R(u,v)v,u>}{|u\wedge v|^2}$} is negative.  Connected simply connected locally  symmetric spaces of negative sectional curvature are non-compact. They have been classified and are one of the hyperbolic spaces  $\H_\R^n, \H_\C^n, \H_\H^n, \H_\mathbb O^2$, respectively of dimension respectively $n, 2n, 4n, 16$. Hyperbolic spaces are obtained as quotients  of  semisimple Lie groups of real rank one (respectively $SO(n,1), SU(n,1), Sp(n,1), F_{4(-20)}$), endowed with  the  metrics coming from the Killing forms, by  maximal compact subgroups. By general results of Borel (\cite{Bl}) and Selberg (\cite{Se}), these spaces admit compact boundaryless quotient manifolds and those locally symmetric $(M,g_0)$ are the basic examples of our objects of study.  Clearly, $C^2$ small $C^\infty $ perturbations of $g_0$ on the same space $M$ yield  other examples of compact negatively curved  manifolds.  Different examples  of non-locally symmetric, compact, negatively curved  manifolds  have been constructed (see \cite{MS}, \cite{D}, \cite {GT}, \cite {FJ}). They are supposed to be abundant, even if constructing explicit ones is often delicate. 

It is natural to ask if we can  recognize locally symmetric spaces through global properties or  quantities.   One supportive example is the volume entropy. Let $\M$ be the universal cover space of $M$ such that $M=\M/\Gamma$, where $\Gamma:=\Pi_1(M)$ is the fundamental group of $M$, and endow $\M$ with metric $\wt{g}$, which is the $\G$-invariant extension of $g$. The volumes on $(M, g)$ and $(\M, \wt{g})$ are denoted ${\rm Vol}_{g}$ and ${\rm Vol}_{\wt{g}}$, respectively.  (We will fix  a connected fundamental domain  $M_0$ for the  action of $\Gamma$ on $\M$. The restriction of ${\rm Vol}_{\wt{g}}$ on $M_0$ is also denoted ${\rm Vol}_{g}$.)  For $x\in \M$, let $B_{\M}(x, r), r>0,$ denote the ball centered at $x$ with radius $r$.  The following limit exists (independent of $x \in \M$) and defines the {\it{volume entropy}} (Manning, \cite{Mg}):
\begin{equation*}
 V(g) : =  \lim\limits_ {r \to +\infty} \frac{1}{r} \log \vol_{\wt{g}} B_{\M}(x, r). \end{equation*}
Since $(M, g)$ is negatively curved, by Bishop comparison theorem, $V(g)>0$. The following rigidity result is shown by Besson-Courtois-Gallot (\cite{BCG}):

\begin{theo}[\cite{BCG}]\label{BCG95} Let $(M,g_0) $ be closed locally symmetric space of negative curvature,  and consider another metric $g$ on $M$ with negative curvature and such that  $\vol _g (M) = \vol _{g_0} (M) $. Then, 
\[V(g) \; \geq V(g_0).\] If $d = dim (M) >2$, one has equality only if $(M,g)$ is isometric to $(M,g_0)$.\end{theo}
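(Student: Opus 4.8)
The plan is to run the barycentre (``natural map'') argument of Besson--Courtois--Gallot. Write $h:=V(g)$, $h_0:=V(g_0)$, pass to the universal covers $(\wt M,\wt g)$ and $(Y,\wt g_0)$ --- the latter a rank-one symmetric space --- both carrying the cocompact isometric action of $\Gamma=\pi_1(M)$, and fix a $\Gamma$-equivariant homeomorphism $\iota\colon\partial\wt M\to\partial Y$ of ideal boundaries; such an $\iota$ exists because each space, being negatively curved and cocompact, is quasi-isometric to $\Gamma$, so each ideal boundary is $\Gamma$-equivariantly identified with $\partial\Gamma$. For $c>h$ the function $z\mapsto e^{-c\,d_{\wt g}(x,z)}$ is integrable on $(\wt M,\mathrm{vol}_{\wt g})$ --- this is exactly the role of the hypothesis $c>h$, since $\mathrm{vol}_{\wt g}\big(B_{\wt M}(x,r)\big)\asymp e^{hr}$ --- so $d\mu_x^{c}(z):=e^{-c\,d_{\wt g}(x,z)}\,d\mathrm{vol}_{\wt g}(z)$ is a finite measure on $\wt M$. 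Let $\bar\mu_x^{c}$ be the probability measure on $\partial Y$ obtained by normalising the pushforward of $\mu_x^{c}$ under $z\mapsto\iota(\sigma_x(z))$, where $\sigma_x(z)\in\partial\wt M$ is the endpoint of the geodesic ray issued from $x$ through $z$; it is non-atomic, being the image of an absolutely continuous measure, and one checks $\bar\mu^c_{\gamma x}=\gamma_*\bar\mu^c_x$ for $\gamma\in\Gamma$. Define the \emph{natural map} $F_c\colon\wt M\to Y$ by letting $F_c(x)$ be the barycentre of $\bar\mu^c_x$, i.e.\ the unique minimiser of $y\mapsto\int_{\partial Y}B^0_\theta(y)\,d\bar\mu^c_x(\theta)$, where $B^0_\theta$ is the Busemann function of $Y$ centred at $\theta$; existence and uniqueness are standard for barycentres of non-atomic measures on rank-one symmetric spaces. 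By equivariance at every step $F_c(\gamma x)=\gamma F_c(x)$, so $F_c$ descends to $f_c\colon M\to M$ inducing the identity on $\pi_1$; since $M$ is aspherical, $f_c$ is homotopic to $\mathrm{id}_M$, hence $|\deg f_c|=1$.

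The analytic heart is the pointwise bound $|\mathrm{Jac}\,F_c|(x)\le(c/h_0)^{d}$. To obtain it I would differentiate, by the implicit function theorem, the first-order condition $\int_{\partial Y}\mathrm{grad}\,B^0_\theta\big|_{F_c(x)}\,d\bar\mu^c_x(\theta)=0$ in the variable $x$: the $y$-derivative produces the positive symmetric form $H_x:=\int_{\partial Y}\mathrm{Hess}\,B^0_\theta\big|_{F_c(x)}\,d\bar\mu^c_x(\theta)$ on $T_{F_c(x)}Y$, while differentiating the measure produces a term governed by $\nabla_x\big({-}c\,d_{\wt g}(x,\cdot)\big)$, a vector field of norm $c$, paired against the unit field $\mathrm{grad}\,B^0_\theta$. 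Pairing the differentiated identity with a tangent vector and applying Cauchy--Schwarz then gives $|\mathrm{Jac}\,F_c|(x)\le c^{d}(\det K_x)^{1/2}/\det H_x$, where $K_x:=\int_{\partial Y}\mathrm{grad}\,B^0_\theta\otimes\mathrm{grad}\,B^0_\theta\big|_{F_c(x)}\,d\bar\mu^c_x(\theta)$. Here one uses the rank-one structure of $(Y,\wt g_0)$: its Busemann Hessians have explicit eigenvalues, so $\mathrm{tr}\,H_x=\int_{\partial Y}\Delta B^0_\theta\,d\bar\mu^c_x=h_0$ (horospheres in $Y$ have constant mean curvature $h_0=V(g_0)$) and $H_x\ge\mathrm{Id}-K_x$; feeding $H_x,K_x$ into the sharp Besson--Courtois--Gallot determinant inequality for positive symmetric $d\times d$ forms under these constraints yields $c^{d}(\det K_x)^{1/2}/\det H_x\le(c/h_0)^{d}$ --- and this is the one place where local symmetry of $g_0$ enters essentially. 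Integrating over $M$, from $f_c^*\,d\mathrm{vol}_{g_0}=\mathrm{Jac}(f_c)\,d\mathrm{vol}_g$ and $|\deg f_c|=1$ we get $\mathrm{vol}_{g_0}(M)\le(c/h_0)^{d}\,\mathrm{vol}_g(M)$; letting $c\downarrow h=V(g)$ and using $\mathrm{vol}_g(M)=\mathrm{vol}_{g_0}(M)$ gives $1\le(V(g)/V(g_0))^{d}$, i.e.\ $V(g)\ge V(g_0)$.

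For the rigidity assertion, assume $V(g)=V(g_0)$. Then the last inequality is an equality, so in the limit $c\downarrow V(g)$ --- in which $\bar\mu^c_x$ converges to (a multiple of) the image under $\iota$ of the Patterson--Sullivan measure of $(\wt M,\wt g)$, a conformal density of dimension $V(g)$ --- the limiting natural map $F$ satisfies $|\mathrm{Jac}\,F|\equiv 1$ and equality holds pointwise in the Besson--Courtois--Gallot inequality. For $d>2$ that equality case is rigid: it forces $H_x$ and $K_x$ to be scalar multiples of the identity, hence $d_xF$ to be a conformal linear map at every $x$; combined with $|\mathrm{Jac}\,F|\equiv1$ the conformal factor is $1$, so $F^*\wt g_0=\wt g$. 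Since $F$ is a proper local diffeomorphism of degree $\pm1$ between simply connected manifolds, it is a diffeomorphism, hence a Riemannian isometry, and therefore $(M,g)$ is isometric to $(M,g_0)$. When $d=2$ the equality case is not rigid --- an area-preserving $d_xF$ need not be conformal --- which is why the conclusion must be dropped; indeed all hyperbolic metrics on a given surface have the same area and the same volume entropy without being mutually isometric.

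The step I expect to be the real obstacle is the Jacobian bound of the second paragraph. It is not a routine computation, resting on two delicate points: (i) choosing the family $\{\bar\mu^c_x\}$ so that its logarithmic derivative in $x$ is controlled by $c$ alone, uniformly in the a priori arbitrary geometry of $(M,g)$; and (ii) the sharp determinant inequality for pairs of positive symmetric forms under the constraints $\mathrm{tr}\,H_x=V(g_0)$ and $H_x\ge\mathrm{Id}-K_x$ forced by the rank-one structure of $g_0$, together with the analysis of its equality case, which is precisely what powers the rigidity statement and forces the restriction $d>2$. The subsidiary technical issues --- the limited regularity of Patterson--Sullivan measures and the smoothness and convergence of $F_c$ as $c\downarrow V(g)$ --- are handled exactly by keeping $c>V(g)$ until the very end.
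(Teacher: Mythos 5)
The paper does not prove this theorem: it is imported verbatim from Besson--Courtois--Gallot, so there is no internal argument to compare against. Your outline is a faithful reconstruction of the BCG barycentre-map proof (the $e^{-c\,d(x,z)}$ measures for $c>V(g)$, the equivariant natural map, the pointwise Jacobian bound, integration against the degree, and the conformality analysis of the equality case), and you correctly identify where the hypotheses $c>V(g)$, local symmetry of $g_0$, and $d>2$ enter; the $d=2$ remark about hyperbolic metrics on a surface is also the right explanation for why rigidity must be dropped there.

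One step, however, is quantitatively wrong as written and would break the displayed chain of inequalities. Differentiating the first-order condition and applying Cauchy--Schwarz only in the $\theta$-variable gives $\bigl|H_x(d_xF_c(u),v)\bigr|\le c\,\bigl(K_x(v,v)\bigr)^{1/2}\bigl(K'_x(u,u)\bigr)^{1/2}$, where $K'_x:=\int \nabla_x d_{\wt g}(x,z)\otimes\nabla_x d_{\wt g}(x,z)\,d\mu^c_x / \|\mu^c_x\|$ is a second positive symmetric form of trace $1$ (the gradients of the distance are unit vectors). Bounding $K'_x(u,u)$ crudely by $\|u\|^2$, as your text does, yields $|\mathrm{Jac}\,F_c|\le c^{d}(\det K_x)^{1/2}/\det H_x$, but the subsequent claim $c^{d}(\det K_x)^{1/2}/\det H_x\le(c/h_0)^{d}$ is then false: testing at the extremal configuration $K_x=\tfrac1d\,\mathrm{Id}$, $H_x=\mathrm{Id}-K_x$ (real hyperbolic case, $h_0=d-1$ after normalisation) gives $(\det K)^{1/2}/\det H= d^{-d/2}\bigl(\tfrac{d}{d-1}\bigr)^{d}$, which exceeds $h_0^{-d}$ by the factor $d^{d/2}$. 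The correct intermediate bound is $|\mathrm{Jac}\,F_c|\le \tfrac{c^{d}}{d^{d/2}}\,(\det K_x)^{1/2}/\det H_x$, using $\det K'_x\le d^{-d}$ from $\mathrm{tr}\,K'_x=1$; only then does the sharp BCG determinant inequality (under $\mathrm{tr}\,K_x=1$ and $H_x\ge\mathrm{Id}-K_x$) deliver $(c/h_0)^{d}$. This is a repairable slip rather than a wrong approach, but since the Jacobian estimate is the analytic heart of the proof --- as you yourself emphasise --- the missing $d^{-d/2}$ has to be accounted for, and it also matters in the equality analysis, where both Cauchy--Schwarz applications must be saturated.
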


 If $ d = 2,$ equality holds if, and only if, the curvature is constant (Katok, \cite{Kk}). In the case $d>2$, Katok (\cite{Kk}) proved  Theorem \ref{BCG95} under the hypothesis that $g $ is conformally equivalent to $g_0$. 
 \begin{remark} The theorem holds even if $g'$ is a metric on another manifold $M'$, homotopically equivalent to $M.$ \end{remark}

The locally symmetric property can also be interpreted as  geodesic symmetry. A {\it {geodesic}} in $M$ is a curve $t \mapsto \g(t), t \in \R,$ such that, if $\dot \g (t) := \frac{d}{dt} \g (s) \big|_{s=t}$, satisfies $\nabla _{\dot \g(t)} \dot \g(t)=0$ for all $t$.    For all $v \in TM, $ there is a unique geodesic $\g_v (t) $ such that $\dot \g (0) = v$. The {\it {exponential map}} $\exp_x: T_x M \to M$ is given by $\exp_x v = \g_v (1). $ By compactness, there exists $\iota>0 $ such that, for all $x \in M, \, \exp_x $ is a diffeomorphism between the  ball of radius $\iota$ in $(T_xM, g_x)$ and the ball of radius $\iota$ about $x$ in $M$.  The Cartan-Ambrose-Hicks Theorem implies that the space is locally symmetric if, and only if,  for any $x \in M$, the geodesic symmetry about $x$ defined by $ y \mapsto \exp_x (- \exp_x^{-1} y ) $ is a local isometry. 

One natural dynamics related to geodesics is the geodesic flow.  Let $SM := \{ v, v \in TM : \| v \| = 1 \}$ be the unit tangent bundle. The {\it {geodesic flow }} $\vf _t $ on $SM$ is such that $\vf _t (v) = \dot \g_v(t)$ for $ t \in \R.$ Denote $\ov X(v) \in T_vSM$ the vector field on $SM$ generating the geodesic flow. The derivative $D_v \vf _t$ is described using {\it {Jacobi fields.}} Let $s \mapsto v(s) $ be a curve in  $SM$ with $v(0) = v, \dot v(0) = w \in T_vSM. $ Then, $s\mapsto \g_{v(s)}(t) $ is a curve with tangent vector $J(t) $ at $\g_v(t).$
$J(t) $ satisfies the {\it {Jacobi equation:}}
\begin{equation}\label{Jacobi} \nabla _{\dot \g }  \nabla _{\dot \g } J(t) + R (J(t), \dot \g (t)) \dot \g (t) = 0. \end{equation}
\begin{proof} By definition, \[ R (J(t), \dot \g (t)) \dot \g (t) = \nabla _{J(t)}  \nabla _{\dot \g(t) } \dot\g (t) - \nabla _{\dot \g(t) }  \nabla _{J(t)}\dot \g(t) - \nabla _{[J(t), \dot \g(t) ]} \dot \g .\]
 We have $\nabla _{\dot \g(t) } \dot\g (t) =0$ by definition, $[J(t), \dot \g(t) ] = [\frac{\pp}{\pp s} , \frac{\pp}{\pp t} ] = 0 $ and so $ \nabla _{J(t)}\dot \g(t) =  \nabla _{\dot \g(t)}J(t) $ (we use the fact that $\nabla_uv-\nabla _v u = [u,v]$). \end{proof}

 We will consider $C^ \infty $ compact boundaryless connected  Riemannian manifolds with  negative sectional curvature.  It follows  from (\ref{Jacobi}) that  $t \mapsto \| J(t) \|^2 $ is a strictly convex function (by a direct computation). In particular,  $\exp_x$ is a diffeomorphism  from $T_xM $ to  the universal cover $\M$.  Two geodesic rays $\gamma_1, \gamma_2$ in $\M$ are said to be equivalent if $\sup _{t\geq0 } d(\g_1(t), \g_2(t)) < \infty .$ The space of equivalence classes $\partial\M:=\{[\g _v (t), t\geq 0], v\in TM\}$ is the {\it {geometric  boundary at infinity}}.
For $x \in \M, \pi _x: S_x\M \to \pp \M, \pi _x (v) = [\g _v (t), t\geq 0]$ is one-to-one ($\pi _x $ is injective by convexity (of $t\mapsto d(\g_v(t), \g_{w}(t))$  for $w\in S_x\M$ with $w\not=v$)  and for any geodesic $\g(t)$, any $t>0$, one can find $v_t \in S_x\M$ such that $\g(t) \in \g_{v_t} (s), s\geq 0$; any limit point $v$ of $v_t, t\to +\infty,$ is such that $\g_v$ is equivalent to $\g$).  Thus, the unit tangent bundle $S\M$ is identified with $\M \x \pp \M.$  For any two points $\xi , \eta$  in $\pp \M$, there is a unique geodesic $\g_{\eta, \xi} $ (up to time translation) such that $\g_{\eta, \xi}  (+ \infty ) :=\lim_{t\to +\infty}\g_{\eta, \xi}(t)= \xi$ and $\g_{\eta, \xi}  (-\infty ) :=\lim_{t\to -\infty}\g_{\eta, \xi}(t) = \eta $. The topology on $\M \x \pp \M$ is such that two pairs $(x,\xi )$ and $(y,\eta ) $ are close if $x$ and $y$ are close and the distance from $x$ to the  geodesic $\g_{\eta, \xi} $ is large. The group $\G$ acts discretely and cocompactly on $\M$. The action of $\G$ extends continuously to $\pp \M$ and the diagonal action of $\G$ on $\M \x \pp \M$ is again discrete and cocompact. The quotient $(\M \x \pp \M )/\G= S\M /\G$  is identified with $SM$.

We continue  to use $\varphi_t$ to denote the geodesic flow on $S\M$. It has the {\it{Anosov property}} (\cite{Ano}): each $\varphi_t, t\not=0$,  has no fixed point and there is a continuous decomposition $\{T_vS\M=E^{ss}(v)\oplus \ov{X}(v)\oplus E^{su}(v), v\in S\M\}$ with $\ov{X}(v)$ being the geodesic spray tangent to the flow direction and constants $C$, $C>0$, $\l$,  $\l\in (0, 1)$,  such that,  for $t>0$, 
\[
\|D_v\varphi_t w_s\|\leq C\l^t\|w_s\|, \ \forall w_s\in E^{ss}(v),\ \|D_v\varphi_{-t} w_u\|\leq C\l^t\|w_u\|, \ \forall w_u\in E^{su}(v).
\]
For $v=(x, \xi)\in S\M$, the 
{\it {stable manifold  at $v$}} of the geodesic flow, \[ \wt W^s(v) \; := \; \big\{ w: \sup _{t \geq 0} d(\vf _t w, \vf _t v) < + \infty \big\}\]
is  tangent  to $E^{ss}(v)\oplus \ov{X}(v)$.  The $\wt W^s(v)$ can be identified with $\M \x \{\xi\}$ and hence is endowed naturally with the metric $\wt g$. The quotient $( \M \x \{\xi\} )/\G $ is the {\it {stable manifold }} $W^s (v). $ As $\xi $  varies, they form a H\"older continuous lamination  $\W^s$ of $SM$ into $C^\infty $ manifolds of dimension $d$ which is called the {\it {stable foliation}}. Therefore, the  metric on each individual stable manifold  comes from the local identification with $\M.$ The {\it {strong stable manifold at $v$}}, 
\[ \wt W^{ss} (v) := \big\{ (y,\xi ) : \lim\limits_{t \to +\infty} d(\g_{x, \xi} (t), \g_{y, \xi} (t)) = 0 \big\} \]
has tangent $E^{ss}(v)$. 
Let $\underline{v}$ be the projection of $v$ on $SM$; then, $\wt W^{ss} (v) $ projects  onto  
\[W^{ss} (\underline{v}) := \big\{ w \in SM: \lim\limits_{t \to +\infty} d(\g_w(t), \g_{\underline{v}} (t)) = 0 \big\}.\]
The collection of $\{W^{ss} (\underline{v}), \underline{v}\in SM\}$ form a H\"older continuous lamination $\W^{ss}$   of $SM$ into $C^\infty $ manifolds of dimension $d-1$ which is called the {\it {strong stable foliation}}. 

  For $v=(x, \xi ) \in S\M$, define the {\it{Busemann function}} \[ b_{x, \xi } (y) =  b_{x, \xi } (y,\xi )\;: = \; \lim\limits_{z\to \xi} \left( d(y,z ) - d(x,z) \right), \ \forall y\in \M. \]
The level set $\{ (y,\xi ) :  b_{x, \xi } (y, \xi ) = 0 \} $ coincides with  $\wt W^{ss}(x,\xi)$ and the set of  its foot points is the  horosphere of $(x, \xi)$. Denote ${\textrm {Div}}^s, \nabla ^s$ the divergence and gradient  along $\wt W^s$ (and $W^s$) 
induced by the metric $\wt g$ on $\M \x \{\xi\},$   $\D ^s = {\textrm {Div}}^s \nabla ^s.$   Then, 
\[ \nabla _y  b_{x, \xi } (y) |_{y=x} = - (x,\xi) \  {\textrm {or}} \ \nabla ^s_w b_v (w)|_{w=v} = -\ov X(v).\]
Set 
\begin{equation*}
\quad B(x, \xi) := \D_y b_{x,\xi}(y) |_{y=x} = - {\textrm {Div}}^s \ov X (v).
\end{equation*}
Geometrically, the  $B(x, \xi)$ is the mean curvature at $x$ of the horosphere of $(x, \xi)$. 
The function $B$ is a $\Gamma$-invariant function on $S\M$. We still denote $B$ the function on the  quotient $SM$. From the definition follows:  
\begin{equation}\label{jacobian} B(v) = - \frac{d}{dt} \log {\textrm {Det}} D_v \vf_t |_{W^{ss}(v)} \big |_{t=0}.\end{equation}
So, dynamically, $-B$ tells the exponential  growth rate of the volume on $W^{ss}$ under the geodesic flow $\varphi_{t}$, $t>0$. 
 It follows from (\ref{jacobian}) that the function $B$ is H\"older continuous on $SM$.  The main property of the function $B$ is the following, whose proof combines the works of Benoist-Foulon-Labourie (\cite{BFL}), Foulon-Labourie (\cite{FL}) and Besson-Courtois-Gallot (\cite{BCG}).
\begin{theo}[\cite{BFL, FL, BCG}] \label{BCG} The function $B$ is constant if, and only if, the space $(M,g)$ is locally symmetric.\end{theo}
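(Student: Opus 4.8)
The plan is to establish the two implications separately. The forward implication is quick: I would observe that $B$ is invariant under $\mathrm{Isom}(\M)$, since an isometry of $\M$ carries horospheres to horospheres preserving their mean curvature while carrying $v=(x,\xi)$ to $(g(x),g(\xi))$; it therefore suffices that $\mathrm{Isom}(\M)$ act transitively on $S\M$, which holds when $(M,g)$ is locally symmetric because $\M$ is then one of the two-point homogeneous rank-one symmetric spaces. (Equivalently, working from $\nabla R=0$ directly: parallel transport along a geodesic $\g$ turns the Jacobi operator $\mathcal R:=R(\,\cdot\,,\dot\g)\dot\g$ into a fixed symmetric matrix, so the shape operator $U^s$ of the horosphere, being the bounded stable solution along $\g$ of the Riccati equation $\dot U+U^2+\mathcal R=0$, is constant along $\g$; its trace, up to sign equal to $B$, is then forced to be the same scalar for every geodesic by homogeneity.)

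For the converse, the hypothesis that $B$ be constant is precisely the statement that $(M,g)$ is \emph{asymptotically harmonic}, and I would run the argument in three stages, one for each cited reference. Stage 1 (Foulon--Labourie, \cite{FL}): prove that on a compact asymptotically harmonic manifold of negative curvature the strong stable and strong unstable distributions $E^{ss},E^{su}$ are $C^\infty$ --- equivalently, that the laminations $\W^{ss},\W^{su}$, and hence $\W^s,\W^u$, are $C^\infty$. The mechanism is that the scalar identity $\mathrm{tr}\,U^s=-B=\mathrm{const}$, fed back into the Riccati equation for the shape operators of the horospheres and combined with the $\vf_t$-invariance of the canonical contact form $\a_v(\,\cdot\,)=\langle\,\cdot\,,\ov X(v)\rangle$, rigidifies the a priori merely H\"older distributions into $C^\infty$ ones. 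Stage 2 (Benoist--Foulon--Labourie, \cite{BFL}): apply the classification of contact Anosov flows whose strong stable and strong unstable distributions are $C^\infty$ (the geodesic flow $\vf_t$, equipped with its canonical contact form, is one such) to conclude that, after passing to a finite cover and a constant rescaling of time, $\vf_t$ is $C^\infty$ conjugate to the geodesic flow of a closed locally symmetric space $N$ of negative curvature; in particular $\G=\Pi_1(M)$ is, up to finite index, a cocompact lattice in a simple Lie group of real rank one, and a finite cover of $M$ is homotopy equivalent to $N$. Stage 3 (Besson--Courtois--Gallot, \cite{BCG}): upgrade the smooth conjugacy to an isometry. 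The conjugacy matches, up to the rescaling constant, the periods of the closed orbits and the contact/Liouville structure, so the scale-invariant quantity $V(g)^{d}\vol_g(M)$ equals its value on the locally symmetric model; since a finite cover of $M$ is homotopy equivalent to $N$, the equality case of the Besson--Courtois--Gallot inequality (Theorem \ref{BCG95} and the remark following it) forces $(M,g)$, up to the finite cover and the rescaling, to be isometric to $(N,g_0)$. As local symmetry is a local condition, unaffected by rescalings and finite covers, $(M,g)$ itself is then locally symmetric.

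I expect the main obstacle to be Stage 1: producing $C^\infty$ regularity of the horospherical foliations from the single hypothesis that all horospheres share the same mean curvature. General Anosov theory supplies only H\"older (and, under curvature pinching, $C^1$) regularity of $E^{ss}\oplus\ov X$ and $E^{su}$, and the extra smoothness requires a delicate bootstrap that simultaneously exploits the Riccati equation for the shape operators and the constraints imposed by the $\vf_t$-invariant contact form; this is the technical heart of \cite{FL}. By contrast, Stage 2 is a hard but essentially self-contained rigidity theorem, and Stage 3 is a comparatively clean application of the entropy-rigidity machinery of \cite{BCG}, once the homotopy equivalence and the equality of the scale-invariant entropy--volume invariant are in hand.
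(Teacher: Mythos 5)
Your proposal is correct and follows exactly the route the paper intends: the paper gives no proof beyond citing \cite{FL}, \cite{BFL} and \cite{BCG}, and your three stages are precisely how those works combine (smoothness of the strong stable/unstable distributions for asymptotically harmonic manifolds, classification of smooth contact Anosov flows, and conjugacy--rigidity via the equality case of the minimal-entropy theorem). The only caveat is that the equality case of Theorem \ref{BCG95} requires $d>2$, so for $d=2$ you should instead read constant curvature directly off the Riccati equation $\dot B+B^2+K=0$, as the paper's remark following the theorem does.
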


\begin{remark} There is a positive operator $U$ on the orthogonal space to $v$ in $T_xM$  satisfying the Riccati equation $\dot U + U^2 + R( \cdot, \dot \g (t)) \dot \g (t) = 0$ and  such that $B = {\textrm {Tr}}U.$   If $d=2$, the equation reduces to $\dot B + B^2 +K = 0.$ Clearly, if $B$ is constant, then the curvature $K$ is the constant $-B^2.$ If $d=3$, one can also conclude from the Riccati equation and some matrix calculations that $B$ is constant if, and only if, the sectional curvature  is constant (see Knieper \cite{Kn}).  \end{remark}

\

\section{Dynamical entropy and an application of thermodynamical formalism}

More quantities related to $V, B$ can be introduced through a dynamical point of view.

\subsection{Dynamical entropy}
Let $T $ be a continuous transformation of a compact metric space $X$. For $x \in X, \e >0, n \in \N,$  define the {\it {Bowen ball }} $B(x, \e, n)$
\[ B(x,\e, n) := \{ y \in X: d(T^jy, T^jx) <\e {\textrm { for }} 1\leq j\leq n \}\]
and the {\it{entropy}} $h_m(T) $ of a $T$-invariant probability measure $m$
\[ h_m(T) := \sup _\e \int \left( \limsup_n -\frac{1}{n} \log m (B(x,\e,n) )\right) \, dm(x) .\]
It is easy to see that for $j \in \Z, h_m (T^j ) = |j| h_m (T).$ A useful upper bound of $h_m(T)$ is given by Ruelle inequality (\cite{Re}) using the average maximal exponential growth rate of all the parallelograms under the iteration of the tangent map  $DT$. 
\begin{theo}[Ruelle, \cite{Re}] Assume $X$ is a compact manifold and $T$ a $C^1$ mapping of $X$. Then, for any $T$-invariant probability measure $m$,
\[ h_m (T) \; \leq \;  \int \left(  \sup _k  \limsup_n \frac{1}{n} \log \| \wedge ^k  D_x T^n \| \right) \, dm(x),\]
where $\wedge ^k  D_x T^n$ denotes  the $k$-th exterior power of $D_x T^n$.\end{theo}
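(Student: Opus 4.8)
The plan is to reduce the problem to a fine partition by the Shannon--McMillan--Breiman theorem, to bound the partition entropy by a one-step conditional entropy, and then to replace $T$ by $T^N$ and let $N\to\infty$; all the geometric content is concentrated in a single estimate: the image under $T$ of a tiny ball is, up to bounded distortion, an ellipsoid whose semi-axes are $\e$ times the singular values $\sigma_1\ge\cdots\ge\sigma_d$ of $D_xT$, so it meets at most a constant times $\prod_i\max(1,\sigma_i(D_xT))=\sup_k\|\wedge^kD_xT\|$ cells of a grid-like partition. Concretely, I would first fix $\e>0$ and choose a finite partition $\mathcal P$ of $X$ with $m(\partial\mathcal P)=0$, $\mathrm{diam}\,\mathcal P<\e$, and with every atom containing a ball of radius $\ge \e/C_0$, where $C_0$ depends only on $(X,g)$ (such ``grid'' partitions exist on a compact manifold). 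Since $\mathrm{diam}\,\mathcal P<\e$, the atom $\mathcal P^n(x):=\bigcap_{0\le j<n}T^{-j}\mathcal P(T^jx)$ is contained in $B(x,\e,n)$, whence $-\tfrac1n\log m(B(x,\e,n))\le -\tfrac1n\log m(\mathcal P^n(x))$; taking $\limsup_n$, integrating $dm$, and applying Shannon--McMillan--Breiman on the right-hand side gives $h_m(T)\le h_m(T,\mathcal P)$ (valid for every such $\e$, as the defining expression for $h_m(T)$ is a supremum over $\e$).

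Next I would bound $h_m(T,\mathcal P)$. From the chain rule $H_m\!\big(\bigvee_{0\le j<n}T^{-j}\mathcal P\big)=H_m(\mathcal P)+\sum_{j=1}^{n-1}H_m\big(T^{-j}\mathcal P\mid\bigvee_{0\le i<j}T^{-i}\mathcal P\big)$, the fact that conditioning on a finer $\sigma$-algebra decreases entropy, and the $T$-invariance of $m$, each summand is $\le H_m(T^{-1}\mathcal P\mid\mathcal P)$; dividing by $n$ gives
\[
h_m(T,\mathcal P)\ \le\ H_m(T^{-1}\mathcal P\mid\mathcal P)\ \le\ \int \log\#\{Q\in\mathcal P:\ Q\cap T(\mathcal P(x))\ne\emptyset\}\,dm(x),
\]
the last step because the conditional entropy of a partition is bounded by the logarithm of the number of atoms it meets, and $m(P\cap T^{-1}Q)>0$ forces $T(P)\cap Q\ne\emptyset$.

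To estimate the integrand I would, for $\e$ below a uniform radius on which all exponential maps are diffeomorphisms with controlled distortion, read $T$ near $x$ in normal coordinates as a $C^1$ map $\psi$ between balls with $D\psi(0)=D_xT$ and $\|D\psi-D_xT\|\le\eta(\e)$ on $B(0,\e)$, where $\eta(\e)\to 0$ by uniform continuity of $DT$ on the compact $X$. Then $T(\mathcal P(x))$ lies in an ellipsoid with semi-axes comparable to $\e\,\sigma_i(D_xT)$ (up to the factor $e^{2\eta(\e)}$), which, covered by atoms that each contain a ball of radius $\e/C_0$, meets at most $C_1\prod_i\max(1,\sigma_i(D_xT))$ of them, with $C_1=C_1(C_0,\eta(\e),d)$ remaining bounded as $\e\to 0$. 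Since $\sup_k\|\wedge^kA\|=\sigma_1\cdots\sigma_{k_0}=\prod_i\max(1,\sigma_i(A))$, combining the three displayed bounds gives $h_m(T)\le \log C_1+\int\log\sup_k\|\wedge^kD_xT\|\,dm$.

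Finally I would apply this inequality with $T^N$ in place of $T$ (choosing $\e=\e_N\to 0$ so that $C_1$ stays bounded) and use $h_m(T^N)=N\,h_m(T)$; dividing by $N$ and letting $N\to\infty$ yields
\[
h_m(T)\ \le\ \int \lim_{N\to\infty}\tfrac1N\log\sup_k\|\wedge^kD_xT^N\|\,dm(x),
\]
the interchange of limit and integral being legitimate because $\tfrac1N\log\sup_k\|\wedge^kD_xT^N\|$ is bounded above uniformly (by $\log\max(1,\|DT\|_\infty^d)$) and converges a.e.\ by Kingman's subadditive ergodic theorem applied to each of the finitely many exterior powers $k=0,\dots,d$, so that $\lim_N$ commutes with $\sup_k$; this is exactly the asserted inequality. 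The heart of the matter, and the only delicate point, is the geometric estimate: one must bound the multiplicity of $\mathcal P$ on $T(\mathcal P(x))$ by the \emph{volume}-type quantity $\prod_i\sigma_i^+=\sup_k\|\wedge^kD_xT\|$ rather than by the crude diameter bound $\|D_xT\|^d$, which is why a grid-like partition and careful distortion control at scale $\e$ are needed, and why the estimate is carried out one step at a time and then bootstrapped through $T^N$ instead of directly for the $n$-fold refinement, where repeated folding of $T$ would spoil a naive count.
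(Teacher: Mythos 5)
The paper does not prove this theorem: it is quoted from Ruelle's 1978 paper as a black box, and the only thing the authors actually establish is the corollary for geodesic flows (for which Remark 2.3 even indicates a softer route via Manning's convexity lemma that bypasses the general inequality entirely). So there is no in-paper proof to compare against; judged on its own, your argument is correct and is essentially Ruelle's original proof as it appears in the standard references (Ruelle, Ma\~n\'e, Katok--Hasselblatt): reduce the Brin--Katok local entropy to a partition entropy via $\mathcal P^n(x)\subseteq B(x,\e,n)$ and Shannon--McMillan--Breiman, bound $h_m(T,\mathcal P)$ by the one-step conditional entropy $H_m(T^{-1}\mathcal P\mid\mathcal P)$, count the atoms of a grid-like partition met by the image of an atom using the volume of a slightly inflated ellipsoid (which is where $\prod_i\max(1,\sigma_i)=\sup_k\|\wedge^k D_xT\|$ enters, rather than the wasteful $\|D_xT\|^d$), and then bootstrap through $T^N$ with $\e_N\to0$, invoking Kingman and bounded convergence to pass to the limit. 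All the standard pitfalls are handled: the constant $C_1$ is kept uniform in $N$ by shrinking $\e_N$ to tame the $C^1$ distortion of $T^N$, the interchange of $\sup_k$ with $\lim_N$ is legitimate because $k$ ranges over a finite set, and the integrands are uniformly bounded above by $d\log^+\|DT\|_\infty$ and below by $0$ (via the $k=0$ term, which must indeed be included for the identity $\sup_k\|\wedge^kA\|=\prod_i\max(1,\sigma_i)$ and for the stated inequality to be meaningful for contracting maps). The only cosmetic imprecision is the claim that the image ellipsoid has semi-axes comparable to $\e\sigma_i$ ``up to the factor $e^{2\eta(\e)}$'': the correct control from the mean value inequality is additive, namely semi-axes at most $\e(\sigma_i+\eta(\e))$, but this changes nothing in the ensuing count since $\sigma_i+\eta+2\le C\max(1,\sigma_i)$.
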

\begin{cor} \label{Ruelle} If $X = SM$, where $(M, g)$  is a compact, boundaryless, $C^2 $ Riemannian manifold with negative sectional curvature and dimension $d$,  $m$ a geodesic flow invariant probability measure, and $t\in \R$, \[ h_m (\vf _t ) \;\leq\; |t| \int_{SM}  B \, dm .\] \end{cor}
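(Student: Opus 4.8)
The plan is to apply Ruelle's inequality (the preceding theorem) to a time-$s$ map of the geodesic flow and then to evaluate the resulting integrand by combining the cocycle identity (\ref{jacobian}) for $B$ with the Birkhoff ergodic theorem. Since time reversal does not change the dynamical entropy of a transformation, $h_m(\vf_t)=h_m(\vf_t^{-1})=h_m(\vf_{-t})$, and $h_m(\vf_0)=0$; hence it suffices to prove that $h_m(\vf_{-s})\le s\int_{SM}B\,dm$ for every $s>0$, from which the claimed bound $h_m(\vf_t)\le|t|\int_{SM}B\,dm$ follows at once. So fix $s>0$. As $\vf_{-s}$ is a $C^\infty$ (in particular $C^1$) diffeomorphism of the compact manifold $SM$ preserving $m$, Ruelle's inequality gives
\[
h_m(\vf_{-s})\ \le\ \int_{SM}\Big(\,\sup_k\ \limsup_n\ \tfrac1n\log\big\|\wedge^kD_v\vf_{-ns}\big\|\,\Big)\,dm(v).
\]

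The next step is to identify the integrand. By the Anosov property of the geodesic flow there is a continuous, flow-invariant splitting $T_vSM=E^{ss}(v)\oplus\ov{X}(v)\oplus E^{su}(v)$ with $\dim E^{ss}=d-1$ and $E^{ss}(v)=T_vW^{ss}(v)$, such that for $s>0$ the cocycle $n\mapsto D_v\vf_{-ns}$ over $\vf_{-s}$ expands $E^{ss}$ exponentially, acts on the flow direction $\ov{X}$ with norm uniformly bounded above and below (indeed $D_v\vf_t\,\ov{X}(v)=\ov{X}(\vf_tv)$), and contracts $E^{su}$ exponentially. A standard consequence of the multiplicative ergodic theorem together with this structure is that for $m$-a.e.\ $v$
\[
\sup_k\ \limsup_n\ \tfrac1n\log\big\|\wedge^kD_v\vf_{-ns}\big\|\ =\ \limsup_n\ \tfrac1n\log{\textrm{Det}}\big(D_v\vf_{-ns}|_{W^{ss}(v)}\big),
\]
the right-hand side being the exponential rate of growth of $(d-1)$-dimensional volume along $W^{ss}$: the supremum over $k$ is attained at $k=d-1$, equivalently at $k=d$ after adjoining the neutral direction $\ov{X}$ (which, by continuity and compactness, makes a bounded angle with $E^{ss}$ and so alters the norm only by a uniformly bounded factor), while adjoining any contracted direction can only lower the rate.

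The last step is to evaluate the right-hand side with (\ref{jacobian}). From the cocycle relation ${\textrm{Det}}\big(D_v\vf_{\sigma+\sigma'}|_{W^{ss}}\big)={\textrm{Det}}\big(D_{\vf_\sigma v}\vf_{\sigma'}|_{W^{ss}}\big)\cdot{\textrm{Det}}\big(D_v\vf_\sigma|_{W^{ss}}\big)$, differentiating in $\sigma'$ at $\sigma'=0$ and using (\ref{jacobian}) give $\frac{d}{d\sigma}\log{\textrm{Det}}\big(D_v\vf_\sigma|_{W^{ss}(v)}\big)=-B(\vf_\sigma v)$; integrating from $\sigma=0$ and substituting $\sigma=-\tau$,
\[
\log{\textrm{Det}}\big(D_v\vf_{-ns}|_{W^{ss}(v)}\big)\ =\ -\!\int_0^{-ns}\! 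B(\vf_\sigma v)\,d\sigma\ =\ \int_0^{ns}\! B(\vf_{-\tau}v)\,d\tau.
\]
Applying the Birkhoff ergodic theorem to $B$ (continuous, hence bounded on $SM$) and the $m$-preserving flow $\tau\mapsto\vf_{-\tau}$ furnishes $B^{*}\in L^1(m)$ with $\int_{SM}B^{*}\,dm=\int_{SM}B\,dm$ and $\tfrac1n\int_0^{ns}B(\vf_{-\tau}v)\,d\tau\to sB^{*}(v)$ for $m$-a.e.\ $v$. Chaining the three displays yields $h_m(\vf_{-s})\le\int_{SM}sB^{*}(v)\,dm(v)=s\int_{SM}B\,dm$, as wanted.

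The step I expect to be the main obstacle is the middle one: matching Ruelle's exterior-power quantity with the strong-stable Jacobian. This uses that the Anosov splitting refines the Oseledets decomposition --- concretely, that $E^{ss}$ is exactly the sum of the Oseledets subspaces with negative Lyapunov exponent --- and it forces one to keep careful track of \emph{which} time direction makes $E^{ss}$ the expanding bundle. That bookkeeping is precisely why the reduction $h_m(\vf_t)=h_m(\vf_{-t})$ is invoked: applied directly to $\vf_t$ with $t>0$, Ruelle's inequality would instead produce the expansion rate along the strong \emph{unstable} bundle, i.e.\ the bound $|t|\int_{SM}B\circ\iota\,dm$ with $\iota\colon v\mapsto -v$ the flip, whereas reversing time puts the bound in terms of $B$ itself, the mean curvature of the stable horospheres, as stated.
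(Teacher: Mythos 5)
Your proposal is correct and follows essentially the same route as the paper's proof: apply Ruelle's inequality to the flow in negative time, identify the supremum over $k$ with the $(d-1)$-dimensional Jacobian along $W^{ss}$ (the bundle expanded in backward time), convert that Jacobian via (\ref{jacobian}) into $\int_0^{ns}B(\vf_{-\tau}v)\,d\tau$, and conclude with the ergodic theorem. The only cosmetic difference is that you reduce to $t<0$ via $h_m(\vf_t)=h_m(\vf_{-t})$, whereas the paper simply works with $t<0$ from the start.
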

\begin{proof} For $ v\in SM, t<0,  |t| $ large, the highest value of $\| \wedge ^k  D_v \vf _t \| $ is obtained for $k =d-1$ and is the Jacobian of $D_v \vf _t $ restricted to $T_v W^{ss} (v).$   By (\ref{Jacobi}), this is $e^{\int_t^0 B(\vf _s v)\, ds}$. By the ergodic theorem, \[ \lim\limits _{n\to +\infty} \frac{1}{n} \log \big\| \wedge ^{d-1}  D_v \vf _{nt}|_{W^{ss}} \big\| = \lim\limits _{n\to +\infty} \frac{1}{n} \int_{nt}^0 B(\vf _s v)\, ds \] exists and has integral $ |t| \int B \,dm.$ The conclusion follows by Ruelle inequality.\end{proof}

Another general inequality is given by 

\begin{theo}[Manning, \cite{Mg}] \label{Manning} Let  $(M, g)$  be  a compact, boundaryless, $C^2 $ Riemannian manifold with negative sectional curvature and dimension $d$,  $m$ a geodesic flow invariant probability measure, and $t\in \R$, \[ h_m (\vf _t ) \;\leq\; |t| V.\] \end{theo}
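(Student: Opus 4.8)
The plan is to reduce the statement to a bound on the topological entropy: I will show $h_{\textrm{top}}(\vf_t)\le |t|V$ by counting orbit segments of the lifted geodesic flow against balls in the universal cover $\M$, and then conclude via the variational principle $h_m(\vf_t)\le h_{\textrm{top}}(\vf_t)$ (alternatively, the same count can be fed directly into the Brin--Katok formula for $h_m$ displayed above, through a standard covering argument). Since $\vf_{-t}=(\vf_t)^{-1}$ and $h_m(T^{-1})=h_m(T)$, it suffices to treat $t>0$. This is in essence Manning's original argument.

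The geometric input is the following. As $(\M,\wt g)$ is a simply connected manifold of negative curvature, it is a Hadamard manifold, so for any two geodesics $\g_1,\g_2$ the function $s\mapsto d(\g_1(s),\g_2(s))$ is convex; in particular, if $d(\g_1(0),\g_2(0))\le\e$ and $d(\g_1(T),\g_2(T))\le\e$ then $d(\g_1(s),\g_2(s))\le\e$ for all $s\in[0,T]$. Moreover, a unit-length geodesic segment of $\M$ is determined by its two endpoints, and by negative curvature two such segments with $\e$-close basepoints that remain $\e$-close have initial directions within $O(\e)$; hence compactness of $M$ provides a constant $C\ge 1$, independent of $T$, such that under the same hypothesis $d_{S\M}(\vf_s\hat v_1,\vf_s\hat v_2)\le C\e$ for all $s\in[0,T]$, where $\hat v_i=\dot\g_i(0)$. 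Since $S\M\to SM$ is $1$-Lipschitz, the projected orbit segments in $SM$ are $C\e$-close as well.

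Now fix $t>0$, $\e\in(0,1)$, a basepoint $x_0$ in the fundamental domain $M_0$ of diameter $D$, and a maximal $(3C\e,n)$-separated subset $E\subset SM$ for $\vf_t$. Lift each $v\in E$ to $\hat v\in S\M$ with $\pi\hat v\in M_0$, and cover $M_0$ by $N(\e)$ balls of radius $\e/2$. If $v\ne v'$ in $E$ have $\pi\hat v,\pi\hat v'$ in a common such ball and also $d(\g_{\hat v}(nt),\g_{\hat v'}(nt))\le\e$, then by the previous paragraph $d_{SM}(\vf_{jt}v,\vf_{jt}v')\le C\e<3C\e$ for $1\le j\le n$, contradicting separation. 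Hence within each of the $N(\e)$ groups the endpoints $\g_{\hat v}(nt)$ are $\e$-separated points of $B_\M(x_0,nt+D)$, and a packing bound gives
\[
\#E\ \le\ N(\e)\,\frac{\vol_{\wt g}B_\M(x_0,\,nt+D+\e)}{v_0(\e)},\qquad v_0(\e):=\inf_{y\in\M}\vol_{\wt g}B_\M(y,\e/2)>0,
\]
the positivity of $v_0(\e)$ again following from compactness. Taking $\frac1n\log$ of both sides and then $\limsup_{n\to\infty}$ kills the constants, and the definition of $V$ yields $\limsup_n\frac1n\log\#E\le tV$; letting $\e\to0$ gives $h_{\textrm{top}}(\vf_t)\le tV$, and the variational principle completes the proof.

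The step I expect to be the genuine obstacle is the geometric lemma of the second paragraph: convexity of the distance function disposes of the basepoints at once, but controlling the tangent directions uniformly in the length $T$ --- which is exactly what lets the count see honest Bowen balls in $SM$ rather than mere proximity of footpoints --- and securing the uniform lower bound $v_0(\e)>0$ both rest squarely on the compactness (bounded geometry) of $M$; granting these, the remainder is packing and volume bookkeeping.
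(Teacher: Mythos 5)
Your argument is correct and is precisely Manning's original proof, which is the one the paper points to: it cites \cite{Mg} and isolates in Remark \ref{Manning-Lemma} exactly the convexity lemma (distance between geodesics in nonpositive curvature is controlled by its values near the two ends) that your second paragraph supplies, the rest being the standard lift-to-$\M$, group-by-fundamental-domain, packing-against-$\vol_{\wt g}B_{\M}(x_0,nt+D)$ count followed by the variational principle. No gaps worth flagging beyond the routine bookkeeping you already acknowledge.
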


\begin{remark}\label{Manning-Lemma} The proof of Theorem \ref{Manning} is based on the following consequence of nonpositive curvature (\cite{Mg}, Lemma page 571). 
For any $v,w \in SM$, any $r \geq 1,$
\begin{eqnarray*}\max \{\sup_{0\leq s\leq 1} d(\vf_s v, \vf_s w), \sup_{r-1\leq s\leq r} d(\vf_s v, \vf_s w)\} &\leq & \sup_{0\leq s\leq r} d(\vf_s v, \vf_s w)\\
&\leq & \sup_{0\leq s\leq 1} d(\vf_s v, \vf_s w) + \sup_{r-1\leq s\leq r} d(\vf_s v, \vf_s w).
\end{eqnarray*}
This observation can also be used to give a direct proof of Corollary \ref{Ruelle}. \end{remark}

\subsection{Thermodynamical formalism} For simplicity, we introduce the notion of pressure  by  the classical variational principle. 
Let $(X,T)$ be a continuous mapping of a compact metric space.
The {\it {pressure}} $P(F) $ of a  continuous function $F: X \to \R$ is defined by 
\[ P(F) \; := \; \sup _m \left \{h_m(T) + \int F \, dm\right \},\] where $m$ runs over all $T$-invariant probability measures. Let $X= SM$ be closed negatively curved and $T=  \vf _1.$ From Ruelle and Manning inequalities follow
\[ P(-B) \leq 0 \quad {\textrm{and}} \quad P(0 ) \leq V.\]
We will construct later  the {\it {Liouville measure }} $m_L$ with the property  (Theorem \ref{Pesin}) 
\begin{equation}\label{entropy-Liouville}h_{m_L}(\vf _1) = \int B \, dm_L = : H
\end{equation}
 and the {\it{ Bowen-Margulis measure}} $m_{BM} $ such that (Theorem \ref{BM-measure})
\begin{equation}\label{entropy-BM}
h_{m_{BM}} (\vf _1) = V.\end{equation}
This will show that $P(-B) = 0 $ and $P(0)= V$. Using these properties, we can prove:

\begin{theo}\label{cute}  Let $(SM, \vf_t)$ be the geodesic flow on a closed manifold of negative curvature. Let $\mathcal M$ be the set of $\vf_t$-invariant probability measures, $H$ and $V$ as defined above. Then, 
\begin{equation}\label{inf-sup-inequality} \inf _{m \in \mathcal M} \int B \, dm  \leq H \leq V \leq \sup_{m \in \mathcal M} \int B \, dm,
\end{equation}
with equality in one of the inequalities if, and only if, $m_L = m_{BM}.$ Moreover, in that case, $\int B \, dm = V$ for all  $ m \in \mathcal M$. \end{theo}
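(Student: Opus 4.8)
The plan is to derive everything from the two identities that will be proved later, namely $P(-B)=0$ and $P(0)=V$, together with the realization of the supremum/infimum in the variational principle by the equilibrium states $m_{BM}$ (for $F=0$) and $m_L$ (for $F=-B$). First I would establish the chain of inequalities in \eqref{inf-sup-inequality}. The inner inequality $H\le V$ is immediate from Theorem \ref{Manning} applied to $m_L$ together with \eqref{entropy-Liouville}: $H=h_{m_L}(\vf_1)\le V$. For the outer inequalities, use $P(-B)=0$: for every $m\in\mathcal M$ one has $h_m(\vf_1)+\int(-B)\,dm\le 0$, i.e. $\int B\,dm\ge h_m(\vf_1)\ge 0$; specializing to $m=m_{BM}$ and using \eqref{entropy-BM} gives $\int B\,dm_{BM}\ge h_{m_{BM}}(\vf_1)=V$, hence $V\le\sup_{m}\int B\,dm$. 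Symmetrically, from $P(0)=V$ applied to $m_L$: $h_{m_L}(\vf_1)\le V$, and combined with \eqref{entropy-Liouville}, $\int B\,dm_L=H\le V$; but $\int B\,dm_L\ge\inf_m\int B\,dm$, which is the direction we do not yet need. To get $\inf_m\int B\,dm\le H$ directly, apply $P(-B)=0$ in the other direction: $m_L$ is the equilibrium state for $-B$, so $h_{m_L}(\vf_1)-\int B\,dm_L=P(-B)=0$, giving $H=h_{m_L}(\vf_1)=\int B\,dm_L\ge\inf_m\int B\,dm$, which is exactly the leftmost inequality.

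Next I would analyze the equality case. Suppose equality holds somewhere in \eqref{inf-sup-inequality}. Consider first the case $H=V$. Then $m_L$ achieves $h_{m_L}(\vf_1)=H=V=P(0)$, so $m_L$ is an equilibrium state for $F\equiv 0$, i.e. a measure of maximal entropy. Since the geodesic flow on a closed negatively curved manifold is Anosov and topologically mixing, the measure of maximal entropy is unique and equals $m_{BM}$; hence $m_L=m_{BM}$. Conversely, if $m_L=m_{BM}$ then $H=h_{m_L}(\vf_1)=h_{m_{BM}}(\vf_1)=V$ by \eqref{entropy-Liouville} and \eqref{entropy-BM}, so equality holds in the middle. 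Now suppose instead equality holds in the rightmost inequality, $V=\sup_m\int B\,dm$. Pick $m_*$ attaining (or a sequence approaching) the sup; from $P(-B)=0$, $h_{m_*}(\vf_1)\le\int B\,dm_*=V=P(0)$, so again $m_*$ is a measure of maximal entropy, forcing $m_*=m_{BM}$; but then $V=\int B\,dm_{BM}$. On the other hand the defining property of $m_{BM}$ as the equilibrium state for $-B$ would need examination: actually it is $m_L$, not $m_{BM}$, that is the equilibrium state for $-B$, with $h_{m_L}(\vf_1)-\int B\,dm_L=0$; combining $V=h_{m_{BM}}(\vf_1)$ and $V=\int B\,dm_{BM}$ gives $h_{m_{BM}}(\vf_1)-\int B\,dm_{BM}=0=P(-B)$, so $m_{BM}$ is \emph{also} an equilibrium state for $-B$; by uniqueness of equilibrium states for H\"older potentials under the Anosov property, $m_{BM}=m_L$. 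The leftmost equality case $\inf_m\int B\,dm=H$ is handled the same way: the minimizer $m_{**}$ satisfies $h_{m_{**}}(\vf_1)\ge 0$ and $\int B\,dm_{**}=H=h_{m_L}(\vf_1)=\int B\,dm_L$, so $m_{**}$ also realizes the equilibrium value for $-B$, hence equals $m_L$; pushing this through yields $P(-B)=0$ is attained with $\int B\,dm_{**}$ minimal, and one deduces $\int B$ is constant on $\mathcal M$, which already gives $m_L=m_{BM}$ via the other equalities.

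Finally, for the "moreover" clause: assume $m_L=m_{BM}$, equivalently (by the above) equality holds throughout \eqref{inf-sup-inequality}, so all four quantities coincide; call the common value $c$. Then $c=\inf_m\int B\,dm=\sup_m\int B\,dm$, which forces $\int B\,dm=c$ for every $m\in\mathcal M$. Since $c=V>0$, we get $\int B\,dm=V$ for all $m\in\mathcal M$. (One can remark that, by the standard fact that a H\"older function whose integral against every invariant measure is constant must be cohomologous to a constant, this actually forces $B$ itself to be cohomologous to the constant $V$; but the statement only asks for the integral, so this is a bonus observation rather than something needed.)

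\textbf{Main obstacle.} The genuinely nontrivial input is the \emph{uniqueness} of equilibrium states for H\"older potentials of the Anosov geodesic flow — both the uniqueness of the measure of maximal entropy ($m_{BM}$) and the uniqueness of the equilibrium state for $-B$ ($m_L$). This is where the rigidity "if and only if" gets its teeth: without uniqueness, equality in an entropy inequality would not pin down the measures. This uniqueness is classical (Bowen–Ruelle thermodynamic formalism for Axiom A flows, using topological mixing of the geodesic flow in negative curvature), so I would cite it rather than reprove it; modulo that citation, the argument above is just chasing the two pressure identities $P(0)=V$ and $P(-B)=0$ through the variational principle.
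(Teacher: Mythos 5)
Your chain of inequalities and your treatment of the cases $H=V$ and $V=\sup_{m}\int B\,dm$ are essentially the paper's (for the latter, the detour through ``$m_*$ is a measure of maximal entropy'' is a non sequitur, since $h_{m_*}(\vf_1)\le V$ does not make $m_*$ an MME, and the sup need not be attained; but the squeeze $V\le\int B\,dm_{BM}\le\sup_m\int B\,dm=V$ gives $\int B\,dm_{BM}=V=h_{m_{BM}}(\vf_1)$ directly, so $m_{BM}$ is an equilibrium state for $-B$ and uniqueness finishes it, as you then say). However, the case $\inf_m\int B\,dm=H$ has a genuine gap. From $\int B\,dm_{**}=H$ you cannot conclude that $m_{**}$ ``realizes the equilibrium value for $-B$'': that would require $h_{m_{**}}(\vf_1)=\int B\,dm_{**}$, whereas Ruelle's inequality only gives $h_{m_{**}}(\vf_1)\le\int B\,dm_{**}$ (a zero-entropy minimizer is not excluded by anything you wrote), and the subsequent ``one deduces $\int B$ is constant on $\mathcal M$'' is not an argument. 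The paper handles this case with a different tool: the function $s\mapsto{\rm P}(s)=P(sB)$ is convex and \emph{real-analytic}, its asymptotic slopes at $\mp\infty$ are $\inf_m\int B\,dm$ and $\sup_m\int B\,dm$, and its slope at $s$ is $\int B\,dm_s$. If $H=\inf_m\int B\,dm$, then ${\rm P}$ is affine on $(-\infty,-1]$, hence affine everywhere by analyticity, which forces $\int B\,dm_{BM}={\rm P}'(0)=H$, and then $V=H$ and $m_{BM}=m_L$. Some input of this kind is needed; convexity alone does not propagate affineness from $(-\infty,-1]$ to $[-1,0]$.

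The ``moreover'' clause is also circular as written: you assert that $m_L=m_{BM}$ is ``equivalent (by the above)'' to equality holding \emph{throughout} \eqref{inf-sup-inequality}, but what precedes only shows that equality in one inequality implies $m_L=m_{BM}$, and the converse yields only the middle equality $H=V$; the outer equalities are exactly what the ``moreover'' is supposed to establish. The missing ingredient (used in the paper, \cite[Proposition 4.9]{PP}) is the converse Livsic-type statement: if the H\"older potentials $0$ and $-B$ have the same equilibrium state, then $-B={\rm P}(-1)-{\rm P}(0)+\frac{\pp}{\pp t}F\circ\vf_t\big|_{t=0}$ for some continuous $F$ that is $C^1$ along orbits, i.e.\ $B$ is cohomologous to the constant $V$, whence $\int B\,dm=V$ for every invariant $m$. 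The cohomology fact you mention in parentheses is the easy reverse implication and does not substitute for it.
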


\begin{proof} Since the function $B$ is H\"older continuous on $SM$, for each $s\in \Bbb R$, there exists a unique invariant probability measure  (equilibrium measure for $sB$) $ {\rm P}(s):= P(sB)= h_{m_s}(\vf _1) + s  \int B \, dm_s$ (\cite[Proposition 4.10]{PP}).\footnote{Chapter 4 in \cite{PP} is only concerned with subshifts of finite type. The extension of \cite{PP} Propositions 4.8, 4.9, 4.10 to suspended flows  is direct (see \cite{PP}, Chapter 6) and the application to geodesic flows on compact negatively curved manifolds is standard (cf. \cite{PP}, Appendix 3).}  For example, by (\ref{entropy-Liouville}), (\ref{entropy-BM}), $m_{L}$, $m_{BM}$ are equilibrium measures for $-B$ and $0$ respectively.  Together with  
Corollary \ref{Ruelle}, we obtain
\[
\inf _{m \in \mathcal M} \int B \, dm\leq \int B \, dm_L=H\leq \sup_{m \in \mathcal M}\{h_m(\varphi_1)\}=V\leq  \int B \, dm_{BM}\leq \sup_{m \in \mathcal M} \int B \, dm,
\]
which gives (\ref{inf-sup-inequality}).

 Clearly, using the uniqueness of $m_s$, we have $H=V$ if,  and only if $m_L = m_{BM}$.  To show any equality in  the other inequalities  of (\ref{inf-sup-inequality}) holds  if, and only if, $m_L = m_{BM},$ we use  properties of the pressure function, in particular of the convex function $s \mapsto {\rm P}(s)$. We already know that ${\rm P}(-1) = 0 $ and that ${\rm P}(0) = V.$ From the definition follows that $\inf _{m \in \mathcal M} \int B \, dm $ and $\sup _{m \in \mathcal M} \int B \, dm $ are the slopes of  the asymptotes of the function ${\rm P}(s)$ as $s \to -\infty $ and $+\infty $ respectively. Since the function $B$ is H\"older continuous on $SM$, the function $s \mapsto {\rm P}(s) $  is real analytic (\cite[Proposition 4.8]{PP}). Moreover, the slope at $s$ is given by $\int B\, dm_s$ (\cite[Proposition 4.10]{PP}). Now, if $H  =\inf _{m \in \mathcal M} \int B \, dm ,$ the function  $s \mapsto {\rm P}(s) $ is affine on $[-\infty ,-1]$ and thus everywhere. Since the slopes of ${\rm P}(s)$ at $-1$   and $0$  are $\int B \, dm_{L}=H$  and $\int B \, dm_{BM},$ respectively, and $H\leq V\leq \int B \, dm_{BM}$, hence we must have $V =\int B \, dm_{BM}$, which implies that $m_{BM} $ coincides with  $m_L$ and $V= H$. Finally, if $V = \sup_{m \in \mathcal M} \int B \, dm,$ the measure $m_{BM} $ is the equilibrium measure for $-B$, which must coincide with $m_{L}$.  
 
Assume $m_{BM} $ and $m_L$ coincide, then (\cite[Proposition 4.9]{PP}), there exists a continuous function $F$ on $SM$, $C^1$ along the trajectories of the geodesic flow, such that 
\[  - B  = {\rm P}(-1) - {\rm P}(0) + \frac{\pp }{\pp t } F\circ \vf _t \big|_{t=0}.\] 
In particular,   $\int B \, dm = P(0) = V$ for all  $ m \in \mathcal M$.
\end{proof}

\subsection{Liouville measure}

For $x \in \M $, let $\l_x$ denote the pull back  measure on $\partial\M$  of the Lebesgue probability measure on $S_x\M$ through the mapping $\partial\M\mapsto S_x\M:\ \xi\mapsto (x, \xi)$.  Define a measure $\wt m_L$ on $\M\x \pp \M$ by setting
\[ \int F (x,\xi )\, d\wt m_L  =   \int _{\M} \left(\int _{\pp \M} F(x, \xi) \, d\l_x (\xi) \right)  \frac{ d\vol_{\wt{g}} (x)}{\vol_{g} (M)}.\]
It is clear from the definition that the measure $\wt m_L$ is $\G$-invariant. There is  a $D\vf_t$-invariant  2-form on $\ov X^\perp $ in $TSM$ defined by the Wronskian $\W$
 \[ \W \big( (J_1, J'_1), (J_2, J'_2)\big) \; := \; <J_1(t), J'_2(t)> - <J'_1(t), J_2(t)>. \] Assume $M$ is orientable. The $(2d-1)$-form  $\wedge^{d-1}\W \wedge dt $ is  nondegenerate  and invariant. 
 For $v \in SM$, take a positively oriented  orthonormal basis $\{e_0, \cdots , e_{n-1} \}$  in $T_xM$ such that $e_0 = v$. By computing $\wedge^{d-1}\W \wedge dt $ on the $(2d-1)$-vector
 $\left((e_1, 0), (0,e_1), \cdots, (e_{n-1},0),(0, e_{n-1}), \ov X \right),$ one sees that the measure associated to this volume form is the one we defined. So the measure $\wt m_L$ is invariant  under the geodesic flow.  We do the same computation on a double  cover of $M$ if $M$ is not orientable.
 
  The measure $m_L $ on $SM$ that extends to $\wt m_L $ is a $\vf_t$-invariant probability measure which is called the {\it{Liouville probability measure}.}  It satisfies
\begin{theo}\label{Pesin} For all $t\in \R, \; \;  h_{m_L} (\vf _t ) =|t| \int B \, dm_L.$ \end{theo}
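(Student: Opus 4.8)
The plan is to prove the Pesin-type identity $h_{m_L}(\vf_t) = |t|\int B\,dm_L$ by establishing the reverse inequality to Corollary \ref{Ruelle}; since that corollary already gives $h_{m_L}(\vf_t)\le |t|\int B\,dm_L$, it suffices to prove $h_{m_L}(\vf_t)\ge |t|\int B\,dm_L$, and by the scaling relation $h_m(\vf_t)=|t|h_m(\vf_1)$ it is enough to treat $t=1$. The natural route is Pesin's entropy formula: for a smooth measure (one absolutely continuous with respect to the Riemannian volume on $SM$) that is invariant under a $C^{1+\alpha}$ diffeomorphism, the metric entropy equals the integral of the sum of the positive Lyapunov exponents. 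I would first record that $m_L$ is, by its very construction via the nondegenerate invariant volume form $\wedge^{d-1}\W\wedge dt$, smooth in this sense, and that the time-one map $\vf_1$ of the geodesic flow of a $C^\infty$ metric is $C^\infty$, hence certainly $C^{1+\alpha}$, so Pesin's theorem applies.

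The second step is to identify the sum of positive Lyapunov exponents of $\vf_1$ with respect to $m_L$ pointwise (almost everywhere) with $\int_0^1 B(\vf_s v)\,ds$, and then integrate. By the Anosov splitting $T_vS\M = E^{ss}(v)\oplus \ov X(v)\oplus E^{su}(v)$, the flow direction contributes exponent zero, the stable subbundle $E^{ss}$ contributes only negative exponents, and all positive exponents come from $E^{su}(v)$. The sum of the Lyapunov exponents along $E^{su}$ equals the exponential growth rate of $\operatorname{Det} D_v\vf_n|_{E^{su}(v)}$; by the symmetry between stable and unstable manifolds (reversing the flow, or equivalently replacing $v$ by $-v$), this growth rate is $-\lim_n \frac1n\log\operatorname{Det} D_v\vf_{-n}|_{W^{ss}(v)}$, which by formula (\ref{jacobian}) — exactly as used in the proof of Corollary \ref{Ruelle} — equals $\lim_n\frac1n\int_0^n B(\vf_s v)\,ds = \int B\,dm_L$ by the Birkhoff ergodic theorem applied to $\vf_1$ and the continuous function $v\mapsto \int_0^1 B(\vf_s v)\,ds$. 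Combining Pesin's formula with this identification gives $h_{m_L}(\vf_1) = \int B\,dm_L$, which together with Corollary \ref{Ruelle} yields equality; the case $t<0$ follows from $h_m(\vf_t)=h_m(\vf_{-t})$ and the time-reversal symmetry, or simply from $h_m(\vf_t)=|t|h_m(\vf_1)$.

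The main obstacle is not any single inequality but the invocation of Pesin's entropy formula in the correct setting: Pesin's theorem is classically stated for diffeomorphisms, so one must either cite a version for flows (time-one maps) preserving a smooth measure on a compact manifold, or reduce to the diffeomorphism case directly — the latter is clean here because $\vf_1$ is itself a $C^\infty$ diffeomorphism of the compact manifold $SM$ preserving the smooth probability measure $m_L$, so the standard statement applies verbatim. One should also be slightly careful that Pesin's formula requires the derivative cocycle to satisfy the integrability condition $\log^+\|D\vf_1\|\in L^1$, which is automatic by compactness and smoothness. A secondary point worth a sentence is why no positive exponent hides inside $E^{ss}\oplus\ov X$: this is immediate from the Anosov contraction estimate $\|D_v\vf_t w_s\|\le C\lambda^t\|w_s\|$ for $w_s\in E^{ss}(v)$ recalled above, forcing all exponents on $E^{ss}$ to be at most $\log\lambda<0$, and the flow-direction exponent is $0$. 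With these remarks in place the argument is short; essentially all the geometric content has already been assembled in the proof of Corollary \ref{Ruelle} and in equation (\ref{jacobian}).
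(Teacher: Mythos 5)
Your argument is correct, but it takes a genuinely different route from the paper. The paper's proof is a direct Brin--Katok computation: using Manning's observation (Remark \ref{Manning-Lemma}) it squeezes the flow Bowen ball ${\bf B}(v,\e,r)$ between intersections ${\bf B}(v,\cdot,1)\cap\vf_{r-1}{\bf B}(\vf_{-r+1}v,\cdot,1)$, reduces the Liouville measure of such a set to the $d$-dimensional volume of $B^s(v,\e)\cap\vf_{r-1}B^s(\vf_{-r+1}v,\e)$ inside a stable leaf, and evaluates that volume by the Jacobian formula (\ref{jacobian}); in effect it reproves the entropy identity from scratch in this uniformly hyperbolic setting. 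You instead invoke Pesin's entropy formula for the $C^\infty$ diffeomorphism $\vf_1$ preserving the smooth measure $m_L$, identify the sum of the positive exponents with the unstable Jacobian growth rate, and integrate. What your route buys is brevity and a clean reduction to a standard theorem (no Bowen-ball geometry needed); what the paper's route buys is self-containedness and the template that is then reused verbatim for Theorem \ref{BM-measure}, where no smooth-measure Pesin formula is available. One small point to repair: your intermediate identity has a sign slip, since by (\ref{jacobian}) one has $\log{\rm Det}\,D_v\vf_{-n}|_{E^{ss}(v)}=+\int_{-n}^{0}B(\vf_s v)\,ds$, so $-\frac1n$ of it tends to $-\int B\,dm_L$, not $+\int B\,dm_L$. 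The cleanest fix is to note that the full Jacobian ${\rm Det}\,D_v\vf_n$ on $T_vSM$ is bounded above and below (as $m_L$ is a smooth invariant volume and the angles of the Anosov splitting are bounded away from zero), whence $\log{\rm Det}\,D_v\vf_n|_{E^{su}(v)}=-\log{\rm Det}\,D_v\vf_n|_{E^{ss}(v)}+O(1)=\int_0^n B(\vf_s v)\,ds+O(1)$, and then apply Birkhoff; alternatively use the flip $v\mapsto -v$, which preserves $m_L$ and exchanges $E^{ss}$ and $E^{su}$. With that correction the argument is complete.
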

\begin{proof}({\it {Sketch}}) It suffices to prove the theorem for $t =-1.$ In the definition of entropy, we can use the flow Bowen balls ${\bf B}(v,\e, r)$, $\e, r>0$, 
\[{\bf B}(v,\e, r) := \{ w :   \sup_{-r\leq s\leq 0} d(\vf_s v, \vf_s w) < \e \}.\] 
By Remark \ref{Manning-Lemma},\[ {\bf B}(v,\e/2, 1) \cap \vf _{r-1} {\bf B}(\vf _{-r+1}v,\e/2, 1) \subset {\bf B}(v,\e, r) \subset {\bf B}(v,\e, 1) \cap \vf _{r-1} {\bf B}(\vf _{-r+1}v,\e, 1).\]
Estimating the Liouville  measure of $ {\bf B}(v,\e, 1) \cap \vf _{r-1} {\bf B}(\vf _{-r+1}v,\e, 1)$ reduces to estimating the $d$-dimensional measure of $ B^s(v,\e) \cap \vf _{r-1} B^s(\vf _{-r+1}v,\e), $ where $B^s (v, a) $ is the ball of radius $a$ and center $v$ in $W^s (v)$. It follows from (\ref{jacobian}) that this measure is, up to error terms that depend on $\e$ small enough, but not on $r$, equal to \[ \textrm{Det}  D_{\vf _{-r+1} v} \vf_r |_{W^{s}(\vf _{-r+1}v)} = e^{-\int_{-r+1}^0 B(\vf _s v) \, ds}.\]
It follows that, if one takes $\e $ small enough,
\[ h_{m_L} (\vf _{-1}) = \lim\limits _{r\to +\infty } \frac{1}{r} \int_{SM} \left( \int_{-r+1}^0 B(\vf _s v) \, ds \right) \, dm_L (v) \; = \; \int_{SM} B \, dm_L.\]\end{proof}
Observe that, since $m_L$ is a measure realizing the maximum in  $P(-B)$, it is ergodic.
\begin{remark} Basic facts about ergodic theory and thermodynamic formalism are in Bowen (\cite{B2}); see also Parry-Pollicott (\cite{PP}). The definition of the entropy given here is due to Brin-Katok (\cite{BK}). The ergodicity of $m_L$ with respect to the geodesic flow is a landmark result of Anosov (\cite{Ano}).
\end{remark}
\

\section{ Patterson-Sullivan, Bowen-Margulis, Burger-Roblin}

In analogy to the construction of the measure $m_{L}$, one can obtain the Bowen-Margulis measure $m_{BM}$ using a class of measures (Patterson-Sullivan measures) on the boundary at infinity.

\subsection{ Patterson-Sullivan}

\begin{theo} There exists a family of measures on $\pp \M$, $x \mapsto \nu _x, x \in \M$,  such that 
\begin{equation}\label{PS1} \nu _{\b x} = \b_\ast \nu _x,   {\textrm { for }} \b \in \G,  {\textrm { and }} 
\frac{d\nu_y}{d\nu_x} (\xi ) = e^{-Vb_{x,\xi}(y) }.
\end{equation} The family is unique if normalized by $\displaystyle \int_M \nu_x (\pp\M) \, d\vol_{{g}} (x) =1.$ 
Moreover, the measures $\nu _x$ are continuous. \end{theo}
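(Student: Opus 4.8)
The plan is to use the Patterson--Sullivan construction in the form adapted to pinched negatively curved manifolds. Fix a base point $o \in \M$ and for $s > V$ consider the Poincar\'e series $\mathcal P(s) := \sum_{\g \in \G} e^{-s\, d(o, \g o)}$. First I would identify the critical exponent of $\mathcal P$ with $V$: by cocompactness of the $\G$-action the orbit $\G o$ is coarsely dense, so $\#\{\g \in \G : d(o,\g o) \le r\}$ and $\vol_{\wt g} B_\M(o,r)$ agree up to multiplicative constants (each $\g o$ ``carries'' a copy of the fixed-volume fundamental domain $M_0$), whence $\mathcal P(s)$ converges for $s > V$ and diverges for $s < V$. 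I would then record that $\mathcal P$ in fact diverges at $s = V$ (``$\G$ is of divergence type''), which for cocompact $\G$ follows from the sharp lower bound $\vol_{\wt g} B_\M(o,r) \gg e^{Vr}$ coming from volume comparison and cocompactness, or, failing that, from Patterson's device of multiplying the summand by a slowly varying weight $\theta(d(o,\g o))$.

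For $s > V$ set
\[ \mu_{o,s} \; := \; \frac{1}{\mathcal P(s)}\sum_{\g\in\G} e^{-s\, d(o,\g o)}\,\d_{\g o}, \]
a probability measure on the compactification $\ov\M = \M \cup \pp\M$, and pass to a weak-$\ast$ limit $\nu_o$ along some sequence $s_k \downarrow V$. Since $\mathcal P(s_k) \to \infty$ while $\sum_{d(o,\g o)\le T} e^{-s_k d(o,\g o)}$ stays bounded for each fixed $T$, no mass accumulates at an interior point, so $\nu_o$ is carried by $\pp\M$. For $x \in \M$ write $B_x(z) := d(x,z) - d(o,z)$; this bounded function extends continuously to $\ov\M$ with $B_x(\xi) = b_{o,\xi}(x)$ on $\pp\M$, and at the atomic level one checks $\mu_{x,s} := \mathcal P(s)^{-1}\sum_\g e^{-s d(x,\g o)}\d_{\g o} = e^{-sB_x}\mu_{o,s}$. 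As $e^{-s_k B_x} \to e^{-V B_x}$ uniformly on $\ov\M$, we get $\mu_{x,s_k} \to e^{-VB_x}\nu_o =: \nu_x$ weakly for every $x$, so in particular each $\nu_x$ is supported on $\pp\M$. The Radon--Nikodym relation in (\ref{PS1}) is then the Busemann cocycle identity $b_{o,\xi}(y) - b_{o,\xi}(x) = b_{x,\xi}(y)$, and $\G$-equivariance passes to the limit from the finite-level identity $\b_\ast\mu_{x,s} = \mu_{\b x, s}$ (reindex $\g \mapsto \b^{-1}\g$ using that $\b$ is an isometry of $\M$ and a homeomorphism of $\ov\M$). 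A final rescaling by a positive constant enforces $\int_M \nu_x(\pp\M)\,d\vol_g(x) = 1$.

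For uniqueness, (\ref{PS1}) forces any such family to satisfy $\nu_x = e^{-V b_{o,\cdot}(x)}\nu_o$, so it is determined by $\nu_o$; given two normalized families, the density $\psi := d\nu'_o/d\nu_o$ is, by equivariance of both, a $\G$-invariant function on $\pp\M$, hence $\nu_o$-a.e.\ constant by ergodicity of the $\G$-action on $(\pp\M,\nu_o)$ (classical for cocompact $\G$, equivalently the ergodicity of the geodesic flow for the associated Bowen--Margulis measure), and the normalization pins the constant to $1$. For continuity (atomlessness) of each $\nu_x$: as $M$ is closed and negatively curved, $\G$ is non-elementary and torsion-free with every nontrivial element axial, so an atom of $\nu_o$ at $\xi$, pushed by the powers $\b^n$ of a hyperbolic $\b$ fixing $\xi$ and combined with (\ref{PS1}), is forced to have zero mass --- the standard non-atomicity of conformal densities for groups of divergence type. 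Continuity of $x \mapsto \nu_x$ is then immediate from the explicit density $e^{-Vb_{o,\cdot}(x)}$ together with continuity of the Busemann cocycle.

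I expect the main obstacle to be obtaining the \emph{exact} exponent $e^{-Vb_{x,\xi}(y)}$ in the weak limit rather than merely bounded multiplicative control: this rests on the locally uniform continuous extension of $z \mapsto d(x,z) - d(o,z)$ from $\M$ to $\M \cup \pp\M$, which is exactly where negative curvature (convexity of $t \mapsto d(\g_v(t),\g_w(t))$ and the resulting good structure of the geometric boundary) enters, together with the divergence-type property that forces the limit measure to live on $\pp\M$; the uniqueness statement additionally relies on the ergodicity input.
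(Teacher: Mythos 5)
Your construction is the classical Patterson--Sullivan one and is correct in its main lines, but it differs from the paper's in several choices that are worth comparing. The paper works with the \emph{continuous} Poincar\'e integral $\int_{\M}e^{-sd(x_0,y)}\,d\vol_{\wt g}(y)$ rather than the orbital sum $\sum_\g e^{-sd(o,\g o)}$; this makes the identification of the critical exponent with $V$ immediate from the definition of volume entropy, whereas you need the extra (easy, but nonzero) step of comparing orbit counting with volume via cocompactness. Both treatments handle the convergence-type case by Patterson's weight trick, both pass to weak* limits supported on $\pp\M$, and both extract the exact conformal factor from the continuous extension of $z\mapsto d(x,z)-d(o,z)$ to $\ov\M$, so the heart of the argument is the same. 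For uniqueness, the paper defers to Remark \ref{PSunique}: two families produce two candidate Bowen--Margulis measures, and uniqueness of the measure of maximal entropy forces proportionality. Your route through ergodicity of the boundary action is also standard, but note that writing $\psi=d\nu'_o/d\nu_o$ presupposes $\nu'_o\ll\nu_o$, which is not given; one should pass through the sum $\nu_o+\nu'_o$ (or a shadow-lemma comparison) before invoking ergodicity. Since that ergodicity is itself usually derived from the geodesic flow and $m_{BM}$, the two uniqueness arguments ultimately draw on the same dynamical input.

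The one step I would call a genuine gap is your non-atomicity argument. You propose to push a putative atom at $\xi$ by the powers of a hyperbolic $\b$ fixing $\xi$; but only countably many boundary points are fixed by elements of $\G$, so this mechanism says nothing about an atom at a generic $\xi\in\pp\M$. The correct ``divergence-type'' argument sums the masses $\nu_o(\{\g\xi\})=e^{Vb_{o,\g\xi}(\g o)}\nu_o(\{\xi\})$ over the orbit of $\xi$ and contradicts finiteness of $\nu_o$ using $\sum_\g e^{-Vd(o,\g o)}=\infty$; alternatively, the paper's argument is simpler still and needs no divergence hypothesis: $x\mapsto\nu_x(\pp\M)$ is continuous and $\G$-invariant, hence bounded, while an atom of mass $a>0$ at $\xi$ would give $\nu_{y_n}(\{\xi\})=e^{-Vb_{x_0,\xi}(y_n)}a\to+\infty$ as $y_n\to\xi$ along the ray. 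I would replace your argument by one of these.
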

\begin{proof}We first show the existence of such a family. Fix $x_0\in \M$. It suffices to construct  the family $\nu_{\b x_0}, \b \in \G$,  such that  
\begin{equation}\label{PS2}  {\textrm { for all }} \b  \in \G, \nu _{\b x_0} = \b_\ast \nu _{x_0}  {\textrm { and }} 
\frac{d\nu_{\b x_0}}{d\nu_{x_0}} (\xi ) = e^{-Vb_{x_0,\xi}(\b x_0)}.
\end{equation} 
Indeed, assume such a family  $\nu_{\b x_0}, \b \in \G$,  is constructed, we then set $\nu _y := e^{-Vb_{x_0,\xi}(y) } \nu _{x_0}$ for all $y \in \M$. Using the cocycle property of the Busemann function:
\[
b_{x, \xi}(\beta y)=b_{x, \beta^{-1}\xi}(y)+b_{x, \xi}(\beta x), \ \forall x, y\in \M, \xi\in \partial\M, 
\]
one can easily check that the class of measures $\{\nu _y\}$ satisfies the requirement of (\ref{PS1}).

Recall   $V =  \lim\limits_ {r \to +\infty} \frac{1}{r} \log \vol_{\wt{g}} B_{\M}(x_0, r). $ 
Set, for $s>V$, $\displaystyle d\nu ^s_{\b x_0} (y ):= \frac {e^{-sd(\b x_0,y)} \, d\vol_{\wt{g}} (y)}{\int _{\M} e^{-sd( x_0,y)} \, d\vol_{\wt{g}} (y)}.$   We have \[ \b_\ast d\nu ^s_{ x_0} (y ) \;= \; d\nu ^s_{ x_0} (\b ^{-1}y )\; = \; \frac {e^{-sd( x_0,\b^{-1} y)} \, d\vol_{\wt{g}} (y)}{\int _{\M} e^{-sd( x_0,y)} \, d\vol_{\wt{g}} (y)}\; = \; \frac {e^{-sd(\b x_0,y)} \, d\vol_{\wt{g}} (y)}{\int _{\M} e^{-sd( x_0,y)} \, d\vol_{\wt{g}} (y)}\;=\; d\nu ^s_{\b x_0} (y ).\]
Recall that $\M \cup \pp\M $ is compact and assume that $\int _{\M} e^{-sd(x_0,y)} \, d\vol_{\wt{g}} (y) \to \infty $ as $s \searrow V$. Choose $s_n \searrow V$ such that 
$\nu^{s_n}_{x_0}$ weak*  converge  to $\nu _{x_0}.$ Then, $\nu_{x_0} $ is supported by $\pp \M.$ Moreover, for any $\b \in \G$, $\nu^{s_n}_{\b x_0}$ weak* converge as well and call  $\nu _{\b x_0} := \lim_{s_n \searrow V} \nu^{s_n}_{\b x_0}$. The family $\nu _{\b x_0}, \b \in \G,$ satisfies (\ref{PS2}). Indeed, $\nu _{\b x_0} = \b_\ast \nu _{x_0} .$ Moreover, consider an open cone  $C$ based on $x_0$. We have, for any $\b \in \G$,
\[ \nu _{\b x_0} (C) = \lim\limits_{s_n \searrow V} \nu^{s_n}_{\b x_0}(C) = \lim\limits_{s_n \searrow V} \int _C e^{-s_n\left(d(\b x_0,y)-d(x_0,y)\right)}\, d\nu^{s_n}_{x_0}(y).\]
As $s_n \searrow V,$ most of the $\nu^{s_n}_{x_0}$ measure is supported by a neighbourhood of $\pp\M$ and, for $y$ close to $\xi \in \pp \M, \; d(\b x_0,y)-d(x_0,y) $ is close to $b_{x_0,\xi}(\b x_0).$ The density property follows.

If $\int _{\M} e^{-sd(x_0,y)} \, d\vol_{\wt{g}} (y)$ is bounded, use Patterson's trick (\cite[Lemma 3.1]{P}): one can find a real function $L $ on $\R_+$ such that 
\[ \lim\limits_{s \searrow V} \int _{\M} L(d(x_0,y)) e^{-sd(x_0,y)} \, d\vol_{\wt{g}} (y) =\infty  \quad {\textrm {and}}\quad  \forall a\in \R, \;  \lim\limits _{t\to +\infty } \frac{L(t + a)}{L(t)} = 1 .\]
We can then replace the previous $\nu ^s_{\b x_0} (y )$ by $\nu '^s_{\b x_0} (y ):= \frac {L(d(\b x_0,y)) e^{-sd(\b x_0,y)} \, d\vol_{\wt{g}} (y)}{\int _{\M} L(d(x_0,y)) e^{-sd( x_0,y)} \, d\vol_{\wt{g}} (y)}.$ 

The function $x \mapsto \nu_x (\pp\M) $ is $\G$-invariant and continuous; in particular, it is bounded. This implies that the measure $\nu _{x_0}$ is continuous since otherwise, there is $\xi \in \pp \M$ with $\nu _{x_0} (\{\xi \}) =a >0.$ When $\{ y_n\}_{n\in \N }\in \M $ converge to $\xi$, $\nu _{y_n} (\{\xi \}) = e^{-Vb_{x_0,\xi}(y_n)} a \to +\infty, $ a contradiction.

We will see later (Remark \ref{PSunique})  that such a family is unique, up to multiplication by a constant factor.
\end{proof}
The family $\nu _x, x \in \M, $ is called the family of {\it {Patterson-Sullivan  measures.}}

\subsection{Bowen-Margulis} 
Define, for $x \in \M, \xi, \eta \in \pp\M,$ the {\it{Gromov product }}\[ (\xi ,\eta)_x = \frac{1}{2} \lim\limits_{y \to \xi , z \to \eta} \left(d(x,y) +d(x,z ) - d(y,z)\right).\]
The Gromov product is a nonnegative number (by the triangle inequality) and because of pinched negative curvature, the Gromov product is finite; actually it is (exercise)  uniformly bounded away from the distance from $x$ to the geodesic $\g_{\eta, \xi}.$ Moreover, the Gromov product satisfies the cocycle relation 
\begin{equation}\label{cocycleGromov} (\xi ,\eta)_{x'} - (\xi ,\eta)_{x} = \frac{1}{2} (b_{x, \xi }(x') +b_{x,\eta}(x') ).\end{equation}
Let $\M^{(2)}:  = \{ (\xi, \eta ) \in \pp\M\x\pp\M, \xi \neq \eta \}$. Then, $S\M$ is identified with $\M^{(2)} \x \R $ by the {\it{Hopf coordinates:}} \[ v\mapsto (\g_v (+\infty), \g_v (-\infty ), b_v(x_0)).\]
\begin{prop} Let $\nu _x, x \in \M,$ be the family of {\it {Patterson-Sullivan }} measures. The measure $d\nu (\xi, \eta) :=  \frac{d\nu _x(\xi )\x d\nu _x(\eta)}{ e^{-2V (\xi, \eta )_x}} $ does not depend on $x$. The measure $\nu \x dt$ on $\M^{(2)} \x \R $ is $\G$-invariant  and invariant by the geodesic flow. \end{prop}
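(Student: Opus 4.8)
The plan is to derive all three assertions from the two cocycle identities already recorded: the transformation rule $\frac{d\nu_y}{d\nu_x}(\xi)=e^{-Vb_{x,\xi}(y)}$ of (\ref{PS1}) and the Gromov cocycle relation (\ref{cocycleGromov}). Throughout I write the candidate measure as $d\nu(\xi,\eta)=e^{2V(\xi,\eta)_x}\,d\nu_x(\xi)\,d\nu_x(\eta)$; since $(\xi,\eta)_x$ is finite on $\M^{(2)}$ (it stays within bounded distance of $d(x,\g_{\eta,\xi})$, as noted above), this is a well-defined Radon measure there, and the point is to show it does not depend on $x$.

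\emph{Independence of the base point.} Comparing the defining expression at two points $x$ and $x'$, (\ref{PS1}) gives
\[ d\nu_{x'}(\xi)\,d\nu_{x'}(\eta)=e^{-V(b_{x,\xi}(x')+b_{x,\eta}(x'))}\,d\nu_x(\xi)\,d\nu_x(\eta), \]
while (\ref{cocycleGromov}) gives $e^{2V(\xi,\eta)_{x'}}=e^{V(b_{x,\xi}(x')+b_{x,\eta}(x'))}\,e^{2V(\xi,\eta)_{x}}$. Multiplying the two, the exponential prefactors cancel and one obtains $e^{2V(\xi,\eta)_{x'}}\,d\nu_{x'}(\xi)\,d\nu_{x'}(\eta)=e^{2V(\xi,\eta)_{x}}\,d\nu_x(\xi)\,d\nu_x(\eta)$, i.e. $\nu$ is independent of $x$.

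\emph{$\G$-invariance of $\nu$.} Fix $\beta\in\G$ and test against $F\in C_c(\M^{(2)})$. Computing $\nu$ with base point $x_0$, substituting $(\xi,\eta)\mapsto(\beta\xi,\beta\eta)$, and using that $\beta$ is an isometry, so $(\xi,\eta)_{x_0}=(\beta\xi,\beta\eta)_{\beta x_0}$, together with $\beta_\ast\nu_{x_0}=\nu_{\beta x_0}$ from (\ref{PS1}), yields
\[ \int F(\beta\xi,\beta\eta)\,d\nu(\xi,\eta)=\int F(\xi,\eta)\,e^{2V(\xi,\eta)_{\beta x_0}}\,d\nu_{\beta x_0}(\xi)\,d\nu_{\beta x_0}(\eta)=\int F\,d\nu, \]
the last equality being the base-point independence just proved, applied with $x=\beta x_0$. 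Hence $\beta_\ast\nu=\nu$.

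\emph{Invariance of $\nu\times dt$ under $\G$ and under the geodesic flow.} I would make the two actions explicit in the Hopf coordinates $v\mapsto(\g_v(+\infty),\g_v(-\infty),b_v(x_0))$, in which only the last coordinate moves. Since the Busemann function decreases at unit speed along a ray towards its endpoint, $b_{\vf_s v}(x_0)=b_v(x_0)+s$, so the geodesic flow acts by $(\xi,\eta,t)\mapsto(\xi,\eta,t+s)$ and visibly preserves $\nu\times dt$. For $\beta\in\G$, isometry-invariance and additivity of the Busemann cocycle give $b_{\beta v}(x_0)=b_v(x_0)+b_{x_0,\xi}(\beta^{-1}x_0)$ with $\xi=\g_v(+\infty)$, so $\beta$ acts by $(\xi,\eta,t)\mapsto(\beta\xi,\beta\eta,t+\tau_\beta(\xi))$ with $\tau_\beta(\xi):=b_{x_0,\xi}(\beta^{-1}x_0)$ depending on neither $\eta$ nor $t$; by Fubini, pushing $\nu\times dt$ forward then translates each $\R$-fibre (preserving $dt$) and moves the $\M^{(2)}$-factor by the diagonal action of $\beta$ (preserving $\nu$ by the previous step), so $\nu\times dt$ is $\G$-invariant. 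The part that needs care is the sign and cocycle bookkeeping that produces the formula for $b_{\beta v}(x_0)$ and confirms that $\tau_\beta(\xi)$ is independent of the remaining coordinates; the rest are one-line substitutions.
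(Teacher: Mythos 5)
Your proposal is correct and follows essentially the same route as the paper: base-point independence from the Gromov cocycle relation (\ref{cocycleGromov}) combined with the density property (\ref{PS1}), $\G$-invariance of $\nu$ from equivariance of the family $\nu_x$ and isometry-invariance of the Gromov product, and then the explicit formulas for the actions of $\G$ and $\vf_s$ in Hopf coordinates (your $b_{\beta v}(x_0)=b_v(x_0)+b_{x_0,\xi}(\beta^{-1}x_0)$ and $b_{\vf_s v}(x_0)=b_v(x_0)+s$ are exactly the paper's formulas). You have simply written out the bookkeeping the paper leaves implicit; the only point the paper mentions that you omit is that $\nu$ charges no mass near the diagonal because each $\nu_x$ is continuous, but this is not needed for the invariance statements themselves.
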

\begin{proof} The first affirmation follows directly from the cocycle relation (\ref{cocycleGromov}).  In particular, the measure $\nu $ is $\G$-invariant on $\pp\M \x \pp\M$. The measure $\nu$ is supported by $\M^{(2)}$ because $\nu _x $ is continuous. 
The actions of $\G$ and of $ \vf _s$ in Hopf coordinates are given by:
\begin{eqnarray*} \b (\xi,\eta,t) &=& (\b \xi, \b \eta, t + b_{x_0, \xi} (\b^{-1} x_0)),  {\textrm{ for }} \b \in \G, \\
\vf_s (\xi,\eta,t) &=& (\xi,\eta,t+s).\end{eqnarray*}
The invariance of $\nu \times dt$ under the actions of $\G$ and of $ \vf _s$ follows.
\end{proof}
We call {\it {Bowen-Margulis  measure }} $m_{BM} $ the unique probability measure on $SM$ such that its $\G$-invariant extension is proportional to $\nu \x dt$. It satisfies

\begin{theo}\label{BM-measure}  $h_{m_{BM}} (\vf _t ) \; = \; |t| V. $ \end{theo}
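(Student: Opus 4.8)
The plan is to prove the two inequalities $h_{m_{BM}}(\vf_t)\le |t|V$ and $h_{m_{BM}}(\vf_t)\ge |t|V$ separately. The first is free: Manning's inequality (Theorem~\ref{Manning}) holds for \emph{every} $\vf_t$-invariant probability measure, in particular for $m_{BM}$. So the theorem is really the lower bound, and since $h_m(\vf_t)=|t|\,h_m(\vf_1)$ it suffices to show $h_{m_{BM}}(\vf_{-1})\ge V$. For this I would run the scheme of the proof of Theorem~\ref{Pesin}: compute the entropy with the flow Bowen balls $\mathbf{B}(v,\e,r)=\{w:\sup_{-r\le s\le 0}d(\vf_s v,\vf_s w)<\e\}$, use Remark~\ref{Manning-Lemma} to sandwich
\[
\mathbf{B}(v,\tfrac{\e}{2},1)\cap\vf_{r-1}\mathbf{B}(\vf_{-r+1}v,\tfrac{\e}{2},1)\ \subset\ \mathbf{B}(v,\e,r)\ \subset\ \mathbf{B}(v,\e,1)\cap\vf_{r-1}\mathbf{B}(\vf_{-r+1}v,\e,1),
\]
and reduce the problem to estimating $m_{BM}\big(\mathbf{B}(v,\e,1)\cap\vf_{r-1}\mathbf{B}(\vf_{-r+1}v,\e,1)\big)$, up to multiplicative constants that depend on $\e$ but not on $r$ or $v$.

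The key is the product structure of $m_{BM}$. In the Hopf coordinates $v\mapsto(\g_v(+\infty),\g_v(-\infty),b_v(x_0))$ the set just mentioned is, up to bounded distortion, the product of three pieces: an $O(\e)$-neighbourhood of $\g_v(+\infty)$ in the forward endpoint (which is unconstrained on $[-r,0]$, by convexity of $s\mapsto d(\g_v(s),\g_w(s))$); a Lebesgue interval of length $O(\e)$ in the flow parameter; and, in the backward endpoint, the set of $\eta$ such that the geodesic with endpoints $(\g_v(+\infty),\eta)$ passes $\e$-close to both $\g_v(0)$ and $\g_v(-r)$ --- that is, the shadow, seen from $x=\g_v(0)$, of the ball $B_{\M}(\g_v(-r),\e)$. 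Since $m_{BM}$ extends $e^{2V(\xi,\eta)_x}\,d\nu_x(\xi)\,d\nu_x(\eta)\times dt$ and, on this product, the endpoints remain in a fixed small neighbourhood of $(\g_v(+\infty),\g_v(-\infty))$ so that the weight $e^{2V(\xi,\eta)_x}$ stays between two positive constants depending only on $\e$ (the Gromov product being comparable to the distance from $x$ to the geodesic, cf.~(\ref{cocycleGromov})), the whole estimate collapses to the $\nu_x$-measure of that shadow.

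This is exactly what Sullivan's shadow lemma provides: from the transformation rule~(\ref{PS1}), the uniform bound $|b_{x,\xi}(y)-d(x,y)|=O(1)$ for $\xi$ in the shadow of $B_{\M}(y,\e)$ from $x$, the cocompactness of $\G$ (used to propagate a uniform positive lower bound on the $\nu_z$-mass of balls of a fixed visual size), and the continuity of the $\nu_x$ (so that shadow boundaries are $\nu$-null), one obtains constants $c,C>0$ depending on $\e$ with $c\,e^{-Vd(x,y)}\le\nu_x\big(\mathcal{O}_x(B_{\M}(y,\e))\big)\le C\,e^{-Vd(x,y)}$, uniformly in $x,y$. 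With $d(\g_v(0),\g_v(-r))=r$ this gives $m_{BM}(\mathbf{B}(v,\e,r))\asymp e^{-Vr}$ uniformly in $v$, hence $\lim_{r\to\infty}-\frac{1}{r}\log m_{BM}(\mathbf{B}(v,\e,r))=V$ for every small $\e$; the Brin--Katok formula (the definition of entropy recalled above) then gives $h_{m_{BM}}(\vf_{-1})=V$, and the general statement follows by scaling.

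The main obstacle I expect is the uniform bookkeeping of the second step: making the identification of the telescoped Bowen ball with a shadow precise and quantitative --- in particular the passage between the matched-time condition defining $\mathbf{B}(v,\e,r)$ and the geometric condition ``the geodesic passes $\e$-close to $\g_v(0)$ and to $\g_v(-r)$'' --- and checking that every comparison constant depends on $\e$ alone, not on $r$ or the base vector $v$. This is where the cocompactness of $\G$ and the H\"older continuity (Anosov property) of the stable and unstable foliations are really used; the shadow lemma itself is classical but must be applied carefully, as the Patterson--Sullivan measures need not be doubling a priori.
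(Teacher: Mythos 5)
Your proposal follows essentially the same route as the paper's (sketched) proof: both reduce via Remark~\ref{Manning-Lemma} to estimating $m_{BM}$ of the telescoped set ${\bf B}(v,\e,1)\cap\vf_{r-1}{\bf B}(\vf_{-r+1}v,\e,1)$, describe it in Hopf coordinates as a product of a fixed-size set in $(\xi,t)$ with an exponentially small visual set in $\eta$ based at $\g_v(-r)$, and use the conformal transformation rule of the Patterson--Sullivan family together with boundedness of the Gromov product to get $m_{BM}({\bf B}(v,\e,r))\asymp e^{-Vr}$. Your explicit appeal to Sullivan's shadow lemma is precisely the quantitative content the paper compresses into the assertion that the $\nu\times dt$ measure is within $A^{\pm 2}e^{(-r+3/2)V}m_{BM}({\bf B}(v,\e,1))$, so the argument is correct and not genuinely different.
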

\begin{proof}({\it {Sketch}})  We follow the sketch of the proof of Theorem {\ref{Pesin}}. We have to estimate $m_{BM} \left( {\bf B}(v,\e, 1) \cap \vf _{r-1} {\bf B}(\vf _{-r+1}v,\e, 1)\right).$
Choose $\e$ small enough that this set lifts to $S\M$ into a set of the same form. In Hopf coordinates, this is, up to some constant $A$, of the form:
\[ {\bf B}(v,\e, 1) \;\asymp \; \left\{ (\xi, \eta, t):\ \xi \in C(\vf _{1/2}v, A^{\pm 1} \e), \eta  \in C(-\vf _{1/2}v, A^{\pm 1} \e),  b_v(x_0 ) \leq t \leq b_v(x_0) +1 \right\}, \]
where, for $w\in S\M$ and $0< \d <\pi , C(w, \d)$ is the cone of geodesics starting from $w$ with an angle smaller than $\d.$ Our set 
${\bf B}(v,\e, 1) \cap \vf _{r-1} {\bf B}(\vf _{-r+1}v,\e, 1)$ is \[  \left\{ (\xi, \eta, t):\ \xi \in C(\vf _{1/2}v, A^{\pm 1} \e), \eta  \in C(-\vf _{-r +3/2}v, A^{\pm 1} \e),  b_v(x_0 ) \leq t \leq b_v(x_0) +1 \right\}.\]
The  $\nu \x dt$ measure of this set  is within $A^{\pm 2}  e^{(-r +3/2)V} m_{BM} ( {\bf B}(v,\e, 1) ).$
\end{proof}

\begin{cor} $P(0) = V$ and $ m_{BM} $ is the measure of maximal entropy for the geodesic flow $\varphi_t$. In particular, $m_{BM} $ is ergodic.\end{cor}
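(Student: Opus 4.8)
The plan is to read off both assertions directly from the variational principle together with the two entropy estimates already established. With $T=\vf_1$ one has by definition $P(0)=\sup_m h_m(\vf_1)$, the supremum taken over invariant probability measures. Theorem \ref{Manning} applied with $t=1$ gives $h_m(\vf_1)\le V$ for every such $m$, hence $P(0)\le V$ (as already observed right after Theorem \ref{Manning}); and Theorem \ref{BM-measure} with $t=1$ gives $h_{m_{BM}}(\vf_1)=V$, hence $P(0)\ge V$. Therefore $P(0)=V$, and the supremum defining $P(0)$ is attained at $m_{BM}$. Since $h_m(\vf_t)=|t|\,h_m(\vf_1)$ for all $t$, the same measure $m_{BM}$ maximizes the entropy among $\vf_t$-invariant probability measures for every $t\ne0$; this is exactly the statement that $m_{BM}$ is a measure of maximal entropy for the geodesic flow.

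For the uniqueness implicit in ``\emph{the} measure of maximal entropy'', and for ergodicity, I would argue as in the observation following Theorem \ref{Pesin}. The constant function $0$ is (trivially) H\"older continuous on $SM$, so by \cite[Proposition 4.10]{PP}, in the version for the geodesic flow indicated in the footnote to Theorem \ref{cute}, there is a unique equilibrium state for the potential $0$; by the previous paragraph it is $m_{BM}$, and an equilibrium state for $0$ is precisely a measure of maximal entropy. A unique equilibrium state is automatically ergodic: in the ergodic decomposition $m_{BM}=\int m_x\,dm_{BM}(x)$ the functional $m\mapsto h_m(\vf_1)$ is affine, so $V=h_{m_{BM}}(\vf_1)=\int h_{m_x}(\vf_1)\,dm_{BM}(x)$, and since $h_{m_x}(\vf_1)\le V$ for every $x$ by Theorem \ref{Manning}, we get $h_{m_x}(\vf_1)=V=P(0)$ for $m_{BM}$-a.e.\ $x$; each such $m_x$ is then an equilibrium state for $0$, so uniqueness forces $m_x=m_{BM}$ a.e., which is the ergodicity of $m_{BM}$ (for $\vf_1$, and hence for the flow, since every flow-invariant set is $\vf_1$-invariant).

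There is no real obstacle: the corollary is a direct consequence of Theorems \ref{Manning} and \ref{BM-measure}. The only step that merits a word of care is the passage from ``measure of maximal entropy'' to ``ergodic'', which is not automatic for an arbitrary continuous transformation; here it works because the zero potential is H\"older, so the thermodynamic formalism of \cite{PP} already invoked above (and in the proof of Theorem \ref{cute}) supplies uniqueness of the equilibrium state, and uniqueness combined with the affinity of entropy under ergodic decomposition yields ergodicity.
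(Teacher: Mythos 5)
Your proof is correct and follows exactly the route the paper intends: the corollary is stated without proof as an immediate consequence of Manning's inequality (giving $P(0)\le V$, as noted after Theorem \ref{Manning}), of Theorem \ref{BM-measure} (giving the reverse inequality and exhibiting $m_{BM}$ as a maximizer), and of the uniqueness of equilibrium states for H\"older potentials from \cite[Proposition 4.10]{PP}, which is the same mechanism the paper invokes in Theorem \ref{cute} and uses to assert the ergodicity of $m_L$ after Theorem \ref{Pesin}. One small caution in your ergodicity step: take the ergodic decomposition with respect to the flow rather than with respect to $\vf_1$, because the components of the $\vf_1$-decomposition need not be flow-invariant, whereas both Theorem \ref{Manning} as stated and the uniqueness of the equilibrium state for the potential $0$ concern flow-invariant measures; with the flow decomposition the identical computation gives $m_x=m_{BM}$ almost everywhere and hence flow-ergodicity directly (alternatively, one can avoid decompositions altogether by noting that the set of measures of maximal entropy is a face of the simplex of invariant measures, so a unique such measure is extremal, hence ergodic).
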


\begin{remark}\label{PSunique} It also follows from this construction that the Patterson-Sullivan family $\nu _x$ is unique. Indeed, let  $\nu'_x$ be another Patterson-Sullivan family. One can construct as above a family $\nu'$, $d\nu' (\xi, \eta) :=  \frac{d\nu _x(\xi )\x d\nu' _x(\eta)}{ e^{-2V (\xi, \eta )_x}} .$ By the same reasoning, the mesure $\nu ' \x dt$ is proportional to an invariant probability measure with entropy $V$. It follows that $\nu '$ is proportional to $\nu$, i.e.,   $\nu '_x$ is proportional to $\nu _x$ for all $x$. \end{remark}

\subsection{Burger-Roblin}

Define a measure $\wt m_{BR}$ on $\M\x \pp \M$ by setting, for all continuous function $F$ with compact support on $S\M$,
\begin{equation}\label{BRmeasure} \int F (x,\xi )\, d\wt m_{BR} =   \int _{\M} \left(\int _{\pp \M} F(x, \xi) \, d\nu_x (\xi) \right)  d\vol_{\wt{g}} (x).\end{equation}
It follows from the definition that the measure $\wt m_{BR}$ is $\G$-invariant. Call $m_{BR}$ the induced 
 measure on $SM$; by our normalization,  we have $m_{BR}(SM) = 1.$ The measure $m_{BR}$ is called the 
{\it {Burger-Roblin measure}.} Many of its properties follow from 

\begin{theo}  For any vector field $Z$ on $SM$ such that  $Z(v)$ is tangent to $W^s(v)$ for all $v \in SM$, we have
 \begin{equation}\label{int.byparts} 
\int_{SM} {\rm{Div}}^s Z (v) + V <Z(v), \ov X (v)> \, dm_{BR} (v) \;= \;0 .\end{equation}
\end{theo}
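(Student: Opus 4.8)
The plan is to prove the integration-by-parts formula by lifting everything to the universal cover $\M$, where the stable manifolds become copies of $\M \times \{\xi\}$ carrying the metric $\wt g$, and then exploiting the explicit Radon--Nikodym cocycle of the Patterson--Sullivan family. Concretely, fix $\xi \in \pp\M$ and work on the leaf $\M \times \{\xi\}$. On this leaf the lifted vector field $\wt Z$ is an ordinary vector field on a complete Riemannian manifold, $\mathrm{Div}^s$ is the ordinary Riemannian divergence, and $\langle \wt Z, \ov X\rangle$ is $-\langle \wt Z, \nabla^s b_{x,\xi}\rangle = -\wt Z (b_{x,\xi})$ by the formula $\nabla^s_w b_v(w)|_{w=v} = -\ov X(v)$ recalled before Theorem~\ref{BCG}. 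So on each leaf the integrand, written against the leafwise volume $\vol_{\wt g}$, is $\mathrm{Div}^s \wt Z - V\, \wt Z(b_{x,\xi})$, and the weight $\nu_x$ puts a density on the leaf. The key point is that, by (\ref{PS1}), $d\nu_y/d\nu_x(\xi) = e^{-V b_{x,\xi}(y)}$; fixing a basepoint $x_0$ and writing the leafwise density of $\wt m_{BR}$ (from (\ref{BRmeasure})) as $e^{-V b_{x_0,\xi}(x)}\, d\vol_{\wt g}(x)$ up to the $\xi$-dependent constant $\nu_{x_0}$-mass, the divergence of the weighted field satisfies
\[
\mathrm{Div}^s_{e^{-Vb}\vol_{\wt g}}(\wt Z) = \mathrm{Div}^s \wt Z + \langle \wt Z, \nabla^s(-V b_{x_0,\xi})\rangle = \mathrm{Div}^s \wt Z - V\, \wt Z(b_{x_0,\xi}),
\]
which is exactly the integrand (note $\wt Z(b_{x_0,\xi}) = \wt Z(b_{x,\xi})$ since the two Busemann functions differ by a constant on the leaf). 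Thus the leafwise integral is the integral of a weighted divergence, i.e.\ of a total divergence with respect to the leaf measure $d\mu_\xi := e^{-Vb_{x_0,\xi}}\,d\vol_{\wt g}$.

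Next I would descend to the compact quotient. The measure $m_{BR}$ on $SM$ disintegrates over the strong-stable (equivalently stable) foliation with conditionals proportional to the leafwise measures $\mu_\xi$, and $Z$ being leafwise means we are integrating a leafwise divergence $\mathrm{Div}^s_{\mu_\xi} Z$ against this transverse-invariant family. The statement (\ref{int.byparts}) is then precisely the assertion that $m_{BR}$ is \emph{harmonic} (transverse-invariant) for the stable foliation with respect to the family of leafwise measures $\mu_\xi$: integrating a leafwise divergence against a holonomy-invariant transverse measure gives zero. So the remaining work is (i) to verify that the transverse measure induced by $\nu$ (via the decomposition $S\M \simeq \M^{(2)}\times\R$ and the Patterson--Sullivan construction) is holonomy-invariant for the stable foliation under the reparametrization by $e^{-Vb}$ — this is exactly the content of the cocycle (\ref{PS1}) together with $\G$-invariance of the whole picture — and (ii) to justify Stokes/divergence theorem on the non-compact leaves, which is legitimate because after passing to the quotient the relevant quantity is an integral over the compact $SM$ and one can localize via a partition of unity subordinate to foliation charts, in each of which the leafwise Stokes theorem with the smooth density $\mu_\xi$ applies and the boundary terms cancel between adjacent charts by the holonomy invariance.

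In slightly more down-to-earth terms, an alternative and perhaps cleaner route: pick a finite atlas of foliation boxes $U_\alpha \cong P_\alpha \times T_\alpha$ (plaque $\times$ transversal) for $\W^s$, a subordinate partition of unity $\{\rho_\alpha\}$, and write
\[
\int_{SM} \big(\mathrm{Div}^s Z + V\langle Z,\ov X\rangle\big)\, dm_{BR} = \sum_\alpha \int_{T_\alpha}\!\Big(\int_{P_\alpha}\rho_\alpha\,\big(\mathrm{Div}^s Z + V\langle Z,\ov X\rangle\big)\,d\mu_\xi\Big)\,d\bar\nu(\xi),
\]
integrate by parts on each plaque to move $\mathrm{Div}^s$ onto $\rho_\alpha$ (the $V\langle Z,\ov X\rangle$ term being absorbed by the $e^{-Vb}$ weight as above so that only a genuine $\nabla^s\rho_\alpha$ term survives), and then sum: since $\sum_\alpha \rho_\alpha \equiv 1$ we get $\sum_\alpha \nabla^s\rho_\alpha \equiv 0$ pointwise, and provided the transverse family $\mu_\xi\,d\bar\nu$ is holonomy invariant the plaque-by-plaque contributions reassemble into the integral of a vanishing function. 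The main obstacle — and the step deserving the most care — is precisely the holonomy-invariance claim: one must check that under the stable holonomy between two transversals, the leafwise densities $e^{-Vb_{x_0,\xi}}\,d\vol_{\wt g}$ transform by exactly the Jacobian that makes $\bar\nu$ invariant, which is where (\ref{PS1}), the cocycle property of the Busemann function, and the defining property of $\nu$ all get used together. Everything else (identifying $\langle Z,\ov X\rangle$ with $-Z(b)$, the weighted-divergence identity, and the local Stokes theorem) is routine once the foliated/measure-theoretic setup is in place.
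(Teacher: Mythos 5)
Your argument is correct and is essentially the paper's own proof: both rewrite the integrand as the weighted leafwise divergence $e^{Vb_{x_0,\xi}}\,{\rm{Div}}^s\bigl(e^{-Vb_{x_0,\xi}}Z\bigr)$ using $\nabla^s b_{x,\xi}=-\ov X$ together with the density $e^{-Vb_{x_0,\xi}(y)}\,d\vol_{\wt g}(y)\,d\nu_{x_0}(\xi)$ of $\wt m_{BR}$ coming from (\ref{PS1}) and (\ref{BRmeasure}), and both conclude by a partition of unity subordinate to flow boxes plus the leafwise Stokes theorem. The one remark worth making is that the ``holonomy invariance'' you single out as the main obstacle is not an additional step: it is already contained in the global product formula for $\wt m_{BR}$ on $\M\times\pp\M$ that you derived in your first paragraph, and once the partition of unity is applied to $Z$ itself each piece $\rho_\alpha e^{-Vb_{x_0,\xi}}Z$ is compactly supported in its plaque, so there are no boundary terms between charts to cancel.
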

\begin{proof}  Using a partition of unity, we may assume that $Z$ has compact support inside a flow-box for the foliation. 
Choosing a reference point $x_0$, we can write  $dm_{BR} (x,\xi) = e^{-Vb_{x_0,\xi}(y)} d\nu_{x_0} (y) d\vol_{\wt{g}} (y). $ Since 
$Z$ has compact support on each local stable leaf $W^s_{loc}(x,\xi )$, we have \[\int _{W^s_{loc}(x,\xi )} {\textrm {Div}}^s_y\left(  e^{-Vb_{x_0,\xi}(y)} Z (y,\xi ) \right)\Big|_{y=z}\, d\vol_{\wt{g}} (z) =0\]
for all $(x,\xi) \in S\M$.  Then, (\ref{int.byparts}) follows by developing  \[ {\textrm {Div}}^s_y\left(  e^{-Vb_{x_0,\xi}(y)} Z (y,\xi ) \right)\Big|_{y=z}= \left({\textrm {Div}}^s_y Z (y,\xi )\Big|_{y=z} + V <Z(z,\xi ), \ov X (z,\xi )> \right) 
 e^{-Vb_{x_0,\xi}(z)} .\]
\end{proof}

\begin{cor}   $\int B \, dm_{BR} = V.$ \end{cor}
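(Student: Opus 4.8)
The plan is to derive the corollary directly from the integration-by-parts formula (\ref{int.byparts}) by making a judicious choice of the vector field $Z$. The natural candidate is $Z = \ov X$ itself: it is tangent to $W^s(v)$ at every $v$ (indeed $E^{ss}(v)\oplus \ov X(v)$ is the tangent space to $W^s(v)$), it is globally defined and smooth along the leaves, and it has unit length, so $<Z(v),\ov X(v)> \equiv 1$. With this choice the formula reads
\[
\int_{SM} {\rm Div}^s \ov X(v)\, dm_{BR}(v) + V\int_{SM} 1\, dm_{BR}(v) = 0.
\]
Since $m_{BR}$ is a probability measure, the second term is simply $V$, so it remains only to identify the first integral.

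The key identification is that ${\rm Div}^s\ov X(v) = -B(v)$. This is essentially the definition of $B$: recall from the text that $B(x,\xi) = -{\rm Div}^s\ov X(v)$, where the divergence along the stable leaf is the same as the divergence along the strong stable leaf as far as $\ov X$ is concerned, because $\ov X$ is the flow direction and $b_{x,\xi}$ restricted to the stable leaf has gradient $-\ov X$; equivalently, $B(v) = \Delta^s_y b_{x,\xi}(y)|_{y=x} = {\rm Div}^s(\nabla^s b) = {\rm Div}^s(-\ov X)$. Substituting this into the displayed identity gives $-\int_{SM} B\, dm_{BR} + V = 0$, i.e. $\int B\, dm_{BR} = V$, as claimed.

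The one point that needs a small amount of care — and the only plausible obstacle — is the support hypothesis in the theorem: formula (\ref{int.byparts}) as stated is proved for $Z$ with compact support inside a flow-box, whereas $\ov X$ is globally defined but certainly not compactly supported in a single chart. The standard fix is a partition-of-unity argument: write $\ov X = \sum_i \psi_i \ov X$ with $\{\psi_i\}$ a finite smooth partition of unity subordinate to a cover of the compact manifold $SM$ by flow-boxes, apply (\ref{int.byparts}) to each $Z_i := \psi_i \ov X$ (which does have compact support in a flow-box), and sum. The cross terms coming from differentiating the $\psi_i$ cancel because $\sum_i {\rm Div}^s(\psi_i \ov X) = \sum_i (\psi_i {\rm Div}^s\ov X + <\nabla^s\psi_i,\ov X>) = {\rm Div}^s\ov X + <\nabla^s(\sum_i\psi_i),\ov X> = {\rm Div}^s\ov X$, since $\sum_i\psi_i\equiv 1$; similarly $\sum_i \psi_i <\ov X,\ov X> = 1$. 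Hence summing the instances of (\ref{int.byparts}) yields exactly the global identity above, and the corollary follows.

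Alternatively, one can bypass the partition of unity entirely by invoking Theorem \ref{cute} together with the fact that $m_{BR}$ has zero entropy (being supported on stable leaves, it is invariant under the unstable horocycle-type flows and hence not the measure of maximal entropy unless the space is locally symmetric); but the integration-by-parts route is cleaner and self-contained, so I would present that one.
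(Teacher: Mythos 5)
Your proof is correct and is exactly the paper's argument: apply formula (\ref{int.byparts}) with $Z=\ov X$ and use the definition $B=-{\rm Div}^s\ov X$. The partition-of-unity point you raise is already absorbed into the proof of the theorem itself (which is stated for arbitrary leafwise-tangent $Z$ on the compact manifold $SM$), so no extra care is needed at the level of the corollary.
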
 \begin{proof} Apply (\ref{int.byparts}) to $Z= \ov X.$\end{proof} 

\begin{cor} \label{BRsymmetric} The operator $\D^s +V\ov X$ is symmetric for $m_{BR}$: for $F_1,F_2 \in C^\infty (SM),$ the set of smooth functions on $SM$, \[\int _{SM} F_1 (\D^s +V\ov X) F_2 \,  dm_{BR} =\int _{SM} F_2 (\D^s +V\ov X) F_1 \, dm_{BR}.\]
Hence,  $m_{BR}$ is also  {\it {stationary }} for the operator $\D^s +V\ov X$, i.e.,  for all $F \in C^\infty (SM)$,   $\int _{SM}  (\D^s +V\ov X) F \  dm_{BR} = 0.$ \end{cor}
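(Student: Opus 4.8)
The plan is to deduce the corollary directly from the integration-by-parts identity (\ref{int.byparts}) by feeding it the right vector fields. Fix $F_1, F_2 \in C^\infty(SM)$ and apply (\ref{int.byparts}) to $Z := F_1 \nabla^s F_2$. This $Z$ is tangent to $\W^s$ by the very definition of $\nabla^s$, and it is as regular along the leaves as $F_2$, so it is an admissible test field (the proof of (\ref{int.byparts}) only integrates leafwise inside flow-boxes). Using the leafwise Leibniz rule $\textrm{Div}^s(F_1\nabla^s F_2) = F_1\D^s F_2 + \langle\nabla^s F_1,\nabla^s F_2\rangle$, and noting that $\ov X$ is itself tangent to $\W^s$ (it lies in $E^{ss}\oplus\ov X$), so that $\langle\nabla^s F_2,\ov X\rangle = \ov X F_2$ is the derivative of $F_2$ along the flow, identity (\ref{int.byparts}) becomes
\[ \int_{SM} F_1\,(\D^s + V\ov X)F_2 \, dm_{BR} \;=\; -\int_{SM} \langle \nabla^s F_1, \nabla^s F_2\rangle \, dm_{BR}. \]

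Now the right-hand side is manifestly symmetric under interchanging $F_1$ and $F_2$; hence so is the left-hand side, which is exactly the asserted symmetry of $\D^s + V\ov X$ with respect to $m_{BR}$. Stationarity then follows by taking $F_1 \equiv 1$ in this symmetry: since $\D^s F_1 = \ov X F_1 = 0$ the left-hand side vanishes, giving $\int_{SM}(\D^s + V\ov X)F \, dm_{BR} = 0$ for every $F \in C^\infty(SM)$. (Equivalently, one may apply (\ref{int.byparts}) directly to $Z = \nabla^s F$.)

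There is no genuine obstacle here: all the analytic content has already been absorbed into (\ref{int.byparts}). The only two points that deserve a line of justification are the admissibility of $F_1\nabla^s F_2$ as a test field for (\ref{int.byparts}) — immediate, since that identity is established by leafwise integration in flow-boxes and $F_1\nabla^s F_2$ is smooth along each stable leaf — and the identification $\langle\nabla^s F_2,\ov X\rangle = \ov X F_2$, which relies on the flow direction $\ov X$ being tangent to the stable manifolds.
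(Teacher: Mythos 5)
Your proof is correct and coincides with the paper's own argument: both apply the identity (\ref{int.byparts}) to $Z = F_1\nabla^s F_2$, use the leafwise Leibniz rule to obtain $\int_{SM} F_1(\D^s+V\ov X)F_2\, dm_{BR} = -\int_{SM}\langle\nabla^s F_1,\nabla^s F_2\rangle\, dm_{BR}$, and read off symmetry from the right-hand side. The extra remarks on admissibility of the test field and on $\langle\nabla^s F_2,\ov X\rangle = \ov X F_2$ are fine and only make explicit what the paper leaves implicit.
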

\begin{proof} Apply (\ref{int.byparts}) to $Z=  F_1 \nabla^s F_2$ to get
\[\int _{SM} F_1 (\D^s +V\ov X) F_2 \,  dm_{BR} \; = \; -\int _{SM} <\nabla ^s F_1, \nabla ^s F_2> \, dm_{BR}.\]
The Right Hand Side is invariant when switching $F_1$ and $F_2.$ 
\end{proof}

\begin{cor} \label{BRstrongstable} The measure $m_{BR}$ is symmetric for the Laplacian $\D^{ss} $ along the strong stable foliation $\W^{ss}$: for $F_1,F_2  \in C^\infty (SM),$
\[ \int _{SM} F_1 \D^{ss} F_2 \,  dm_{BR} =  \int _{SM} F_2 \D^{ss} F_1 \, dm_{BR}.\]
So, $m_{BR}$ is also {\it {stationary }} for the operator $\D^{ss}$, i.e.,  for all $F \in C^\infty (SM)$,  $\int _{SM}  \D^{ss} F \  dm_{BR} = 0.$
\end{cor}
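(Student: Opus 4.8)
The statement to prove is Corollary~\ref{BRstrongstable}: that $m_{BR}$ is symmetric for $\D^{ss}$, the Laplacian along the strong stable foliation, and hence stationary for it. The natural route is to deduce this from Corollary~\ref{BRsymmetric}, which already gives symmetry of $\D^s + V\ov X$ along the (weak) stable foliation $\W^s$. The key geometric input is that the weak stable leaf $W^s(v)$ decomposes as a (warped) product of the flow direction $\ov X$ and the strong stable leaf $W^{ss}(v)$: every $W^s$-leaf is foliated by strong stable leaves, and the flow parameter $t$ together with coordinates on $W^{ss}$ gives coordinates on $W^s$. Concretely, for $F \in C^\infty(SM)$ one should split the stable gradient as $\nabla^s F = \nabla^{ss} F + (\ov X F)\,\ov X$ (using that $\ov X$ is a unit vector orthogonal to $E^{ss}$ in the metric $\wt g$ on each stable leaf), and correspondingly $\D^s = \D^{ss} + \ov X \ov X + (\text{lower order in the $\ov X$ direction})$, where the lower-order term involves the logarithmic derivative of the Jacobian of the flow along $W^{ss}$ — and by~\eqref{jacobian} that derivative is exactly $-B$. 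So one expects an identity of the shape $\D^s + V\ov X = \D^{ss} + \ov X^2 - B\,\ov X + V\ov X$ acting on functions, i.e. $\D^s + V\ov X = \D^{ss} + \ov X^2 + (V-B)\ov X$.

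First I would make this decomposition precise: fix a flow-box, use coordinates $(t, z)$ on each local $W^s$-leaf with $t$ the flow time and $z \in W^{ss}$, write down the Riemannian volume of $W^s$ in these coordinates as $J(t,z)\,dt\,d\vol^{ss}$ where $\pp_t \log J = -B$ by~\eqref{jacobian}, and then expand $\D^s = \mathrm{Div}^s \nabla^s$ in this product form. This is the routine-calculation step and I would only sketch it. Next, plug the resulting operator identity into Corollary~\ref{BRsymmetric}: we get
\[
\int_{SM} F_1(\D^{ss} F_2) \, dm_{BR} + \int_{SM} F_1(\ov X^2 F_2 + (V-B)\ov X F_2)\, dm_{BR} = (\text{same with } F_1 \leftrightarrow F_2).
\]
So it suffices to show that the operator $\mathcal{A} := \ov X^2 + (V-B)\ov X$ is itself symmetric for $m_{BR}$; then the $\D^{ss}$ terms must match too. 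But $\mathcal{A}$ is a one-dimensional (ordinary differential) operator along the flow orbits, and $m_{BR}$ disintegrates over the flow with the density $e^{-Vb_{x_0,\xi}(\cdot)}$ along $W^s$; along a single flow line the relevant one-dimensional weight is exactly the one that makes $\ov X^2 + c\,\ov X$-type operators symmetric when $c$ matches the logarithmic derivative of that weight. I would verify that the flow-direction part of the $m_{BR}$-density has logarithmic derivative $V - B$ along $\ov X$ (combining $\pp_t b = 1$ along the orbit, giving the factor $V$, with the $W^{ss}$-volume factor, giving $-B$), so that $\mathcal{A}$ is in divergence form for $m_{BR}$ and integration by parts along the orbit gives its symmetry with boundary terms killed by compactness and invariance.

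**Main obstacle.** The delicate point is bookkeeping: keeping track of which Jacobian factor ($W^{ss}$-volume versus Busemann weight versus flow parameter) contributes which term, and in particular confirming that the $W^{ss}$-transverse part of everything genuinely drops out rather than leaving a residual first-order term along $W^{ss}$. One clean way to sidestep the explicit flow-box computation is instead to apply the already-proven identity~\eqref{int.byparts} directly to vector fields of the form $Z = F_1 \nabla^{ss} F_2$ — these are tangent to $W^s$ since $W^{ss}(v) \subset W^s(v)$ — and observe that $\mathrm{Div}^s(F_1\nabla^{ss}F_2) = F_1 \D^{ss}F_2 + \langle \nabla^s F_1, \nabla^{ss}F_2\rangle + F_1 (\mathrm{Div}^s - \mathrm{Div}^{ss})(\nabla^{ss}F_2)$, while $\langle Z, \ov X\rangle = F_1\langle \nabla^{ss}F_2, \ov X\rangle = 0$ because $\nabla^{ss}F_2 \perp \ov X$. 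Then~\eqref{int.byparts} gives $\int F_1 \D^{ss}F_2\,dm_{BR} = -\int \langle\nabla^s F_1, \nabla^{ss}F_2\rangle\, dm_{BR} - \int F_1(\mathrm{Div}^s - \mathrm{Div}^{ss})(\nabla^{ss}F_2)\,dm_{BR}$; the first term is already symmetric in $F_1, F_2$ (since $\langle\nabla^s F_1,\nabla^{ss}F_2\rangle = \langle\nabla^{ss}F_1,\nabla^{ss}F_2\rangle$, using the orthogonal splitting), and the correction term $(\mathrm{Div}^s - \mathrm{Div}^{ss})(\nabla^{ss}F_2)$ is the contraction of $\nabla^{ss}F_2$ against the second fundamental form / the $\ov X$-component of the divergence, which vanishes because $\nabla^{ss}F_2$ has no $\ov X$-component and $\ov X$ is parallel along itself. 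Establishing that this correction term really is zero — i.e. that the weak-stable divergence of a strong-stable-tangent field equals its strong-stable divergence — is the one point that needs the geometry of the foliations ($\ov X$ is a geodesic spray, $E^{ss}$ is $D\vf_t$-invariant), and I expect that to be the true crux; everything else is formal integration by parts and the symmetry of the Right Hand Side, exactly as in the proof of Corollary~\ref{BRsymmetric}.
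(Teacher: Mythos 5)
Your proposal is correct, and your ``clean way to sidestep'' is in fact a slightly different (and arguably more direct) route than the paper's. The paper tests the identity (\ref{int.byparts}) against the flow-direction field $Z=F_1(\ov XF_2)\,\ov X$, which yields the symmetry of the one-dimensional operator $\ov X^2-B\ov X+V\ov X$ (the right-hand side $-\int \ov XF_1\,\ov XF_2\,dm_{BR}$ being manifestly symmetric), and then subtracts this from Corollary \ref{BRsymmetric} using the horospherical decomposition $\D^s=\ov X^2-B\ov X+\D^{ss}$ --- i.e.\ exactly the subtraction strategy of your first route, except that the flow-part symmetry is obtained from (\ref{int.byparts}) rather than from an explicit disintegration of $m_{BR}$ along orbits (your disintegration computation is nonetheless correct: the flow-direction weight is $e^{-Vb}$ times the $W^{ss}$-volume factor, with logarithmic derivative $V-B$). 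Your second route instead tests (\ref{int.byparts}) against $Z=F_1\nabla^{ss}F_2$, kills the $V\langle Z,\ov X\rangle$ term by orthogonality, and lands directly on $\int F_1\D^{ss}F_2\,dm_{BR}=-\int\langle\nabla^{ss}F_1,\nabla^{ss}F_2\rangle\,dm_{BR}$, which is the exact analogue of the proof of Corollary \ref{BRsymmetric} and avoids the subtraction entirely. The crux you identify is real but resolves as you expect: for $W$ tangent to $W^{ss}$ one has $\mathrm{Div}^sW=\mathrm{Div}^{ss}W+\langle\nabla^s_{\ov X}W,\ov X\rangle$, and $\langle\nabla^s_{\ov X}W,\ov X\rangle=\ov X\langle W,\ov X\rangle-\langle W,\nabla^s_{\ov X}\ov X\rangle=0$ since $\ov X$ is the geodesic spray; one should also note that $\nabla^{ss}F_2=\nabla^sF_2-(\ov XF_2)\ov X$ is smooth along each weak stable leaf (the horospherical foliation of a fixed leaf $\M\times\{\xi\}$ is smooth, only the transverse dependence on $\xi$ is merely H\"older), which is the regularity needed to apply (\ref{int.byparts}).
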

\begin{proof} Apply (\ref{int.byparts}) to $Z= F_1\frac{d}{dt}F_2\circ \vf _t \big|_{t=0}  \ov X$ to obtain that 
\[ \int_{SM} F_1\left(\frac{d^2}{dt^2}F_2\circ \vf _t \big|_{t=0} - B\frac{d}{dt}F_2\circ \vf _t \big|_{t=0} + V\frac{d}{dt}F_2\circ \vf _t \big|_{t=0}\right) dm_{BR}=- \int_{SM} \ov XF_1 \ov XF_2 \, dm_{BR}. \] 
Recall that  in horospherical coordinates, $\D^s$ can be written as
\[ \D^s F = \frac{d^2}{dt^2}F\circ \vf _t \big|_{t=0} - B\frac{d}{dt}F\circ \vf _t \big|_{t=0} +\D^{ss} F.\]
Replacing in the formula above, we get that 
\[ - \int _{SM} F_1 \D^{ss} F_2  \, dm_{BR}+\int_{SM} F_1 (\D^s +V\ov X) F_2 \, dm_{BR} \;=\; - \int_{SM} \ov XF_1 \ov XF_2 \, dm_{BR}. \] 
 The conclusion follows from Corollary \ref{BRsymmetric}.\end{proof}

\begin{remark}\label{BRergodic} We observe that  $m_{BR}$ is ergodic. Indeed, strong stable manifolds have polynomial volume  growth\footnote{There are constants $C, k$ such that the volume of the balls of radius $r$ for the induced metric on strong stable manifolds is bounded by  $Cr^k$.}, 
so a symmetric measure for the Laplacian $\D^{ss} $ along the strong stable foliation $\W^{ss}$  is given locally  by the product of the Lebesgue measure along the $W^{ss}$ leaves and some family of measures on the transversals (Kaimanovich, \cite {Ka2}).  This family has to be {\it{invariant }} under the holonomy map of the  $W^{ss}$ leaves. By Bowen-Marcus (\cite{BM}), there exists only one holonomy-invariant family on the transversals to the $\W^{ss}$ foliation, up to  a multiplication by a constant factor.
\end{remark}

\begin{remark} The family of measures in this section has a long history. The invariant measures for the $\W^{ss}$ foliation were first constructed by Margulis (\cite {Ms}) and used to construct the invariant measure $m_{BM}$. Margulis' construction (in the strong unstable case) amounts to taking  the limit of the normalized Lebesgue measure on $\vf _T S_xM$ (see also Knieper (\cite{Kn})).  Margulis did not state that the measure $m_{BM}$ has maximal entropy, and the measure of maximal entropy was constructed by Bowen (cf. Bowen (\cite{B2}), Bowen-Ruelle (\cite{BR})) as the limit as $T\to + \infty $ of equidistributed measures on closed geodesics  of length smaller than $T$. Bowen also showed that the measure of maximal entropy is unique, so that the two constructions give the same measure $m_{BM}$. Independently, Patterson (\cite{P}) constructed the measures $\nu _x$ in the case of hyperbolic surfaces, not necessarily compact;  Sullivan (\cite{Su})  extended the construction to a general hyperbolic space, observed that it  is, up to normalization,  the Hausdorff measure on the limit set of the discrete group in its Hausdorff dimension  for the  angle  metric,    that it is also the conformal measure for the action of the group on its limit set and moreover, the exit measure of the Brownian motion  with suitable drift.   He also   made its connection with the measure of maximal entropy (in the constant curvature case).  Hamenst\"adt (\cite{H1}) connected $m_{BM}$ with the Patterson-Sullivan construction and then many authors extended the Patterson-Sullivan construction to many circumstances (see Paulin-Pollicott-Schapira \cite{PPS} for a detailed recent survey).  Again in the  hyperbolic geometrically finite case, Burger (\cite{Bu}) considered $m_{BR}$ as the measure invariant by the horocycle action; finally, Roblin (\cite{Ro}) considered the general case of a group acting discretely on a $CAT(-1)$ space. What is remarkable is that in all these constructions, these measures were introduced as  tools, and not, like here, as  objects interesting in their own right. A posteriori, their interest comes from all these applications.
\end{remark}

\

\section{A family of stable diffusions; probabilistic rigidity}
Recall (Corollary \ref{BRsymmetric}) that  the Burger-Roblin measure $m_{BR}$ is a stationary measure for $\D^s +V\ov X$. In this section, we study the stationary measures for $\D^s +\rho \ov X, \rho <V,$  characterize them in analogy to $m_{BR}$, and state a rigidity result concerning these measures.

\subsection{Foliated diffusions}A differential operator $\LL$ on $SM$ is called {\it{subordinated to the stable foliation $\W^s$}} if, for any $F\in C^{\infty}(SM)$,  $\LL F (v) $ depends only on the values of $F$ along $W^s (v).$ It is given by a $\Gamma$-equivariant family $\LL_\xi $ on $\M \x \{ \xi \}.$ A  probability measure $m$ is called {\it{stationary}} for $\LL$  if, for all $F\in C^{\infty}(SM)$, 
\[ \int \LL F(v)\, dm(v) \; = \; 0.\] 
 \begin{theo}[Garnett, \cite{Ga}]\label{Ga-theo} Assume $\LL $ is an  operator which is subordinated to $\W^s$,   has continuous coefficients, and is elliptic on $W^s$ leaves. Then, the  set of $\LL$ stationary probability measures is a non-empty convex compact set. Extremal points are called ergodic. \end{theo}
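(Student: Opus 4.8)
The plan is to obtain the three assertions — nonemptiness, convexity, compactness — by a soft functional-analytic argument modeled on the Krylov–Bogolyubov procedure, exploiting the ellipticity of $\LL$ on the leaves to control the smoothing it produces. First I would fix the setting: since $\LL$ is subordinated to $\W^s$, has continuous coefficients, and is leafwise elliptic, the leafwise heat semigroup $P_t := e^{t\LL}$ is well defined on $C^0(SM)$; leafwise ellipticity plus the compactness of $SM$ gives that $P_t$ is a Feller semigroup of Markov operators (positivity-preserving, $P_t 1 = 1$, strongly continuous on $C^0(SM)$). A probability measure $m$ is $\LL$-stationary in the sense of the statement, $\int \LL F\,dm = 0$ for all $F \in C^\infty(SM)$, if and only if $m$ is $P_t$-invariant, i.e. $\int P_t F\,dm = \int F\,dm$ for all $F$ and all $t \ge 0$; this equivalence follows by differentiating at $t = 0$ in one direction and by integrating $\frac{d}{dt}\int P_t F\,dm = \int \LL P_t F\,dm = 0$ in the other (here one uses density of $C^\infty$ and the fact that $P_t$ maps into the domain of $\LL$, a consequence of leafwise hypoellipticity/ellipticity).

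Granting that reformulation, convexity and compactness are immediate: the set of $P_t$-invariant probability measures is cut out of the weak\textsuperscript{*}-compact convex set of all probability measures on the compact space $SM$ by the closed affine conditions $\int P_t F\,dm = \int F\,dm$ for $F \in C^0(SM)$, $t \ge 0$. Each such condition defines a weak\textsuperscript{*}-closed affine hyperplane (using that $P_t F \in C^0(SM)$, so $m \mapsto \int P_t F\,dm$ is weak\textsuperscript{*}-continuous), so the intersection is convex and weak\textsuperscript{*}-compact. For nonemptiness, take any probability measure $m_0$ on $SM$, form the Cesàro averages $m_T := \frac{1}{T}\int_0^T (P_t)_* m_0\,dt$, and extract a weak\textsuperscript{*}-limit point $m$ along $T_n \to \infty$; the standard telescoping estimate $\big|\int P_s F\,dm_T - \int F\,dm_T\big| \le \frac{2s\|F\|_\infty}{T}$ shows any such limit is $P_s$-invariant for every $s$, hence $\LL$-stationary. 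Since a nonempty compact convex subset of a locally convex space has extreme points by Krein–Milman, the final sentence of the statement — that there exist ergodic (extremal) stationary measures — follows.

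The main obstacle is the first step: justifying that leafwise ellipticity (with merely continuous coefficients, and only along the leaves of a bare Hölder lamination $\W^s$, with no transverse regularity) actually produces a well-defined Feller semigroup $P_t$ on $C^0(SM)$ that preserves continuity globally and whose generator is $\LL$ on $C^\infty(SM)$. The point is that the diffusion generated by $\LL$ lives on individual leaves (each a copy of $\M$), so $P_t F(v)$ is computed leaf by leaf; continuity of $P_t F$ across leaves requires continuous dependence of the leafwise heat kernels on the transverse parameter $\xi$, which is where the continuity of the coefficients of the $\Gamma$-equivariant family $\LL_\xi$ enters, together with interior Schauder/parabolic estimates on the leaves and the compactness of $SM/\Gamma$. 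Everything after that is the routine Krylov–Bogolyubov/Krein–Milman packaging sketched above; I would cite Garnett (\cite{Ga}) for the construction of the foliated heat flow and only indicate, rather than carry out, the kernel-continuity estimate.
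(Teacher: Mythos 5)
The paper gives no proof of this theorem; it is quoted directly from Garnett \cite{Ga}, and your sketch reproduces the standard argument of that reference: construct the leafwise Feller semigroup $P_t$, identify stationary measures with $P_t$-invariant ones, get convexity and compactness from weak\textsuperscript{*} closedness, nonemptiness from Ces\`aro averages, and extreme points from Krein--Milman. This is correct, and you rightly flag that the only substantive point is the transverse continuity of $v \mapsto P_tF(v)$, which is exactly what Garnett establishes; the one detail worth a line is that the stationarity condition $\int \LL F\,dm=0$ for $F\in C^\infty(SM)$ must be extended by approximation to functions that are only leafwise smooth with continuous leafwise derivatives (such as $P_tF$) before you can integrate $\frac{d}{dt}\int P_tF\,dm=\int \LL P_tF\,dm=0$.
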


We will consider the operators $\LL^\rho:=\Delta^s+\rho\ov{X}$ for $\rho\in \Bbb R$. Clearly, each  $\LL^\rho$ is subordinated to $\W^s$  and for all $F\in C^{\infty}(SM)$, 
\[ \LL _\xi ^\rho  F (x,\xi ) = \D_y^s F(y, \xi)|_{y=x} +\rho <\ov X, \nabla _y^s F(y, \xi ) |_{y=x}>_{x,\xi}.\]
For a fixed $\xi,$ $  \LL _\xi ^\rho $ is elliptic on $\M$ and Markovian ($\LL _\xi ^\rho 1 = 0$). Hence, by Theorem \ref{Ga-theo},  there is always some $\LL^{\rho}$ stationary  measure. Let $m_{\rho}$ be a $\LL^{\rho}$ stationary  measure. Then, locally  (\cite{Ga}), on a local flow-box of the lamination the measure $m_{\rho}$ has conditional measures along the leaves that are absolutely continuous with respect to Lebesgue, and the  density $K^{\rho}$ satisfies $\LL^{\rho \ast} K^{\rho} = 0 ,$ where $\LL^{\rho \ast}$ is the formal adjoint of $\LL^\rho$ with respect to Lebesgue measure on the leaf, i.e., 
\begin{equation}\label{equ-dual}\LL^{\rho \ast} F \; = \; \D^s F - \rho {\textrm {Div}}^s (F \ov X). \end{equation}
 Globally,  there exists a $\Gamma$-equivariant  family of measures $\nu^\rho_x $ such that the $\G$-invariant extension $\wt m_\rho $ of $m_\rho $ is given by a formula analogous to (\ref{BRmeasure}): \[  \int F (x,\xi )\; d\wt m_\rho\: = \:  \int _{\M} \left(\int _{\pp \M} F(x, \xi) \, d\nu^\rho_x (\xi) \right)  d\vol _{\wt g}(x).\] 
 Indeed, choose a transversal to the foliation $\W^s$, say the sphere $S_{x_0} M$ and write $SM$ as $M_0 \x S_{x_0} M.$   A stationary measure $m_\rho$ is given by an integral for   some measure $d\nu (\xi) $ of measures of the form  $\mathsf{K}^\rho(x, \xi )\: d\vol_{g}(x)$,  where $\vol_{g}$ is the volume on $M_0$.  We can arrange that  $\mathsf{K}^\rho(x_0, \xi ) = 1,\ \nu $-a.e.. For a lift $\wt x_0= :x$, set $\nu ^\rho_x = (\pi _{x })_\ast \nu$. The family $\nu ^\rho_{\b x}, \b \in \G, $ is $\Gamma$-equivariant by construction. Starting from a different point $y_0 \in M_0$, the same construction gives  a $\Gamma$-equivariant family $\nu ^\rho_{\b y}, \b \in \G, $ for the lifts $y$ of $y_0$. By construction also, 
 \[ \frac{d\nu^\rho_y}{d\nu^\rho_x} (\xi )\: =\: \frac{\mathsf{K}^\rho(y,\xi )}{\mathsf{K}^\rho(x,\xi )}.\]
 The same proof as for the relation (\ref{int.byparts}) yields, for any  vector field $Z$ on $SM$ such that  $Z(v)$ is tangent to $W^s(v)$ for all $v \in SM$, 
\begin{equation}\label{int.byparts2} 
\int_{SM} {\textrm {Div}}^s Z   + <Z, \nabla_y^s \log \mathsf{K}^\rho (y,\xi)\big|_{y=x} >\, dm_{\rho} (v) =0.\end{equation}

  For each $\LL^{\rho}$, there is a {\it{diffusion}}, i.e., a $\Gamma$-equivariant  family of probability measures $\wt \P^\rho _{x,\xi}   $ on $C(\R_+, S\M)$ such that 
 $t \mapsto \wt \om (t) $ is a Markov process with generator $\LL ^\rho _\xi $,
$\wt\P ^\rho _{x,\xi} $-a.s. $\wt \om(0) = (x,\xi )$ and $\wt \om( t) \in \M\x \{\xi \},\;  \forall t>0.$
The distribution of $\wt \om(t) $ under $ \wt\P ^\rho _{x,\xi}$ is $p^\rho_\xi (t,x,y)\: d\vol_{\wt{g}}(y)\: \d_\xi (\eta),$  where $p_\xi^\rho (t,x,y) $ is the fundamental solution of the  equation $\frac{\pp F}{\pp t} = \LL _\xi ^\rho F $.   The quotient $\P^\rho_v$ defines a Markov process on $SM$ such that for all $t \geq 0$, $\om (t) \in W^s(\om (0)).$  For any $\LL^{\rho}$ stationary measure  $m_{\rho}$,  the probability measure $\P_{m_{\rho}}^\rho := \int \P_v^{\rho} \,dm_{\rho}(v) $ is invariant under the  shift on $C(\R_+, SM)$ (cf. \cite{Ga, H2}).  If the measure $m_{\rho}$ is an extremal point of the set of stationary measures for $\LL^\rho$, then the probability measure $\P_{m_{\rho}}^\rho$ is invariant  ergodic under the shift on $C(\R_+, SM).$

\begin{prop}\label{cocycle-ell} Let  $m_\rho$ be a stationary ergodic measure for $\LL^\rho$. Then, for $\P_{m_{\rho}}^\rho $ a.e. $\om$ and  any lift $\wt{\om}$ of $\om$ to $S\M$, 
\begin{equation}\label{ell} \lim\limits_{t \to +\infty } \frac{1}{t} b_{\wt \om (0)} (\wt \om (t))\;=\;-\rho + \int B \, dm_\rho\;=:\;\ell_{\rho} (m_\rho ).\end{equation}
In particular, for $\rho = V, m_\rho = m_{BR} $, we have $\ell_{V} (m_{BR}) = V- \int B \, dm_{BR} = 0.$ \end{prop}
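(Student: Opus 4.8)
The plan is to recognize $t\mapsto b_{\wt\om(0)}(\wt\om(t))$ as an additive functional of the diffusion $\wt\om$ and to compute its asymptotic slope by an ergodic-theoretic argument. First I would write, for a smooth path, the exact identity
\[
b_{\wt\om(0)}(\wt\om(t)) = \int_0^t \frac{d}{ds}\, b_{\wt\om(0)}(\wt\om(s))\,ds = \int_0^t \big\langle \nabla^s_y b_{\wt\om(0)}(y)\big|_{y=\wt\om(s)},\, d\wt\om(s)\big\rangle,
\]
and then run the diffusion: under $\wt\P^\rho_{x,\xi}$ the process $\wt\om$ solves an SDE with generator $\LL^\rho_\xi=\D^s+\rho\ov X$, so Itô's formula applied to the (smooth, along the leaf) function $y\mapsto b_{\wt\om(0)}(y)$ gives
\[
b_{\wt\om(0)}(\wt\om(t)) = M_t + \int_0^t \big(\LL^\rho_\xi b_{\wt\om(0)}\big)(\wt\om(s))\,ds,
\]
where $M_t$ is a local martingale with quadratic variation $\int_0^t |\nabla^s_y b_{\wt\om(0)}(y)|^2(\wt\om(s))\,ds = t$ (since $b$ has unit stable gradient, as recorded before the definition of $B$).

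Next I would identify the drift term. Using $\nabla^s_y b_{\wt\om(0)}(y)|_{y=w} = -\ov X(w)$ and $\D^s_y b_{\wt\om(0)}(y)|_{y=w} = -{\rm Div}^s\ov X(w) = B(w)$ (the defining properties of $B$ from Section 1), one gets
\[
\big(\LL^\rho_\xi b_{\wt\om(0)}\big)(w) = \D^s_y b_{\wt\om(0)}(y)|_{y=w} + \rho\langle \ov X(w),\nabla^s_y b_{\wt\om(0)}(y)|_{y=w}\rangle = B(w) - \rho.
\]
Hence $b_{\wt\om(0)}(\wt\om(t)) = M_t + \int_0^t (B-\rho)(\wt\om(s))\,ds$, and projecting to $SM$ this is $M_t + \int_0^t (B-\rho)(\om(s))\,ds$. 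Dividing by $t$: the integral term converges $\P^\rho_{m_\rho}$-a.s.\ to $\int B\,dm_\rho - \rho$ by Birkhoff's ergodic theorem applied to the shift-invariant ergodic measure $\P^\rho_{m_\rho}$ (ergodicity of $m_\rho$ for $\LL^\rho$ gives ergodicity of $\P^\rho_{m_\rho}$ for the shift, as noted in the text just before the proposition), while $M_t/t\to 0$ a.s.\ because $\langle M\rangle_t = t$ grows linearly, so the law of large numbers for martingales (or the Dambis--Dubins--Schwarz time-change reducing $M_t$ to a Brownian motion $W_{\langle M\rangle_t}=W_t$, together with $W_t/t\to 0$) kills it. This yields the claimed limit $-\rho + \int B\,dm_\rho = \ell_\rho(m_\rho)$. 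The last assertion is immediate: for $\rho=V$, $m_\rho=m_{BR}$, we have $\int B\,dm_{BR}=V$ by the corollary to the integration-by-parts formula, so $\ell_V(m_{BR})=V-V=0$.

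I expect the main obstacle to be purely technical rather than conceptual: justifying that $M_t$ is a genuine martingale (or at least that $M_t/t\to 0$) despite $b_{\wt\om(0)}$ being defined on the noncompact leaf $\M\times\{\xi\}$ — the point is that its stable gradient has constant norm $1$ and $B$ is bounded on the compact quotient $SM$, so the coefficients of the relevant SDE are controlled and no explosion or integrability issue arises; one may also simply observe that the quadratic variation $\langle M\rangle_t=t$ is deterministic, which already forces $M_t/t\to 0$ a.s. A secondary point requiring a line of care is that the additive functional $\int_0^t(B-\rho)(\om(s))\,ds$, though built from the continuous-time process, is handled by Birkhoff along, say, integer times plus a bounded-variation-in-a-window estimate, exactly as the measure $\P^\rho_{m_\rho}$ is shift-invariant on $C(\R_+,SM)$; alternatively one invokes the continuous-time ergodic theorem directly.
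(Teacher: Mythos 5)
Your proof is correct, and it reaches the limit by a slightly different mechanism than the paper. You decompose $b_{\wt\om(0)}(\wt\om(t))$ via It\^o's formula into a martingale plus the drift $\int_0^t(\LL^\rho_\xi b)(\wt\om(s))\,ds$, identify $\LL^\rho_\xi b = B-\rho$ using $\nabla^s b = -\ov X$ and $\D^s b = B$, apply the continuous-time Birkhoff theorem to the drift, and kill the martingale by its linear quadratic variation (note only that with generator $\D^s+\rho\ov X$ and the paper's normalization the bracket is $2t$ rather than $t$ --- harmless, since linearity is all you need). The paper instead never writes the martingale: it observes that $b_{\wt\om(0)}(\wt\om(t))$ is a $\G$-equivariant \emph{additive} functional with integrable one-step supremum, so that by ergodicity of $\P^\rho_{m_\rho}$ the a.s.\ limit equals $\tfrac1t\E^\rho_{m_\rho}\bigl[b_{\wt\om(0)}(\wt\om(t))\bigr]$ for every $t>0$, and then lets $t\to0^+$ to recover exactly the same generator computation $\int(\D^s b+\rho\langle\ov X,\nabla^s b\rangle)\,dm_\rho = \int B\,dm_\rho-\rho$. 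The two arguments hinge on the identical key identity $\LL^\rho b = B-\rho$; yours makes the probabilistic structure (martingale plus bounded drift) explicit at the cost of justifying the stochastic calculus on the noncompact leaf (which you correctly dispose of via the unit gradient and boundedness of $B$ on $SM$), while the paper's additive-functional route avoids It\^o entirely and only needs the cocycle property of Busemann functions and the small-time expansion of the semigroup. The final assertion $\ell_V(m_{BR})=0$ is handled identically in both.
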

By Remark \ref{BRergodic}, the  measure $m_{BR}$ is ergodic.

 \begin{proof} Let  $\s_t, t\in \Bbb R_+$,  be the shift transformation on  $C(\R_+, S\M).$ For any $\wt{\om}\in C(\R_+, S\M)$, $t, s\in \R_+$,  
$ b_{\wt \om (0)} (\wt \om (t+s)) = b_{\wt \om (0)} (\wt \om (t)) +  b_{\s_t \wt \om (0)} (\s_t \wt \om (s)).$ By $\G$-equivariance, $ b_{\wt \om (0)} (\wt \om (t))$ takes the same value for all $\wt{\om}$ with the same projection in  $C(\R_+, SM)$  and defines an additive functional on  $C(\R_+, SM).$  Moreover, $\sup_{0\leq t\leq 1}  b_{\wt \om (0)} (\wt \om (t))  \leq \sup_{0\leq t\leq 1}  d(\wt \om (0), \wt \om (t)),$
 so that the convergence  in (\ref{ell}) holds  $\P_{m_\rho}^\rho$-a.e. and in $L^1(\P_{m_\rho}^\rho).$   By ergodicity of the process and additivity of the functional  $ b_{\wt \om (0)} (\wt \om (t))$, the limit is $ \frac{1}{t} \E_{m_\rho}^\rho  \left(b_{\wt \om (0)} (\wt \om (t))\right) $, for all $t>0$. 
 In particular, 
 \begin{align*}
 \lim\limits_{t \to +\infty } \frac{1}{t} b_{\wt \om (0)} (\wt \om (t))\; =\;& \lim\limits_{t \to 0^+} \frac{1}{t} \E_{m_\rho}^\rho \left(b_{\wt \om (0)} (\wt \om (t))\right)\\
 \;=\;& \int_{SM} \D_y^s b_{x,\xi} (y)\big|_{y=x} +\rho <\ov X, \nabla _y^s b_{x,\xi} (y)\big|_{y=x} >_{x,\xi}  \, dm_\rho (x,\xi).
 \end{align*}
 Equation (\ref{ell}) follows. 
 \end{proof}

Following Ancona (\cite{Anc}) and Hamenst\"adt (\cite{H2}), we call our operator  $\LL^{\rho}$ {\it {weakly coercive}} if there is some $\e >0 $ such that for all $\xi \in \pp \M,$ there exists a positive superharmonic function for the operator  $\LL _\xi ^\rho + \e $ (i.e.  a positive $F$ such that  $\LL _\xi ^\rho F + \e F \leq 0$).   As a corollary of Proposition \ref{cocycle-ell}, we see that  if  $m_{\rho}$ is a  $\LL^{\rho}$ stationary measure with $\ell_{\rho}(m_{\rho})>0$, then for $\wt{m}_{\rho}$ almost all $\wt{\om}(0)$ and $\wt\P ^\rho _{x,\xi}$ almost all $\wt{\om}$, $\wt{\om}(+\infty)=\lim_{t\to +\infty} \wt{\om}(t)\in (\partial\M \setminus \{\xi\}) \times \{\xi\}$.  This, together with the  negative curvature  and the cocompact assumption of the underlying space, implies that 

\begin{cor}{\rm(\cite[Corollary 3.10]{H2})}\label{cor-weakly-c} Assume the operator $\LL^\rho$ is such that there exists some $\LL^{\rho}$  stationary ergodic   measure  $m_\rho$  with $\ell_{\rho}(m_{\rho})>0$. Then,  $\LL^\rho$ is weakly coercive. 
\end{cor}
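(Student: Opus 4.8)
The plan is to combine Proposition \ref{cocycle-ell} with Ancona-type boundary theory for the diffusion generated by $\LL^\rho_\xi$, following Hamenst\"adt. First I would record the consequence of Proposition \ref{cocycle-ell} that is actually used: if $m_\rho$ is a stationary ergodic measure with $\ell_\rho(m_\rho)>0$, then for $\wt m_\rho$-a.e.\ starting point $\wt\om(0)=(x,\xi)$ and $\wt\P^\rho_{x,\xi}$-a.e.\ path $\wt\om$, one has $b_{\wt\om(0)}(\wt\om(t))\to+\infty$ at linear speed $\ell_\rho(m_\rho)$; since the path stays in $\M\times\{\xi\}$ and the Busemann function $b_{\cdot,\xi}$ is bounded below on bounded sets, the path must leave every compact subset of $\M$, and by convexity/negative curvature it must converge to a point $\wt\om(+\infty)\in\partial\M$. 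Moreover, because $b_{x,\xi}(\wt\om(t))\to+\infty$ while $b_{x,\xi}$ stays bounded on any bounded neighborhood of $\xi$ in $\M\cup\partial\M$, the exit point satisfies $\wt\om(+\infty)\neq\xi$; so $\wt\om(+\infty)\in(\partial\M\setminus\{\xi\})\times\{\xi\}$ as claimed in the excerpt's discussion.

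Next I would translate this escape-to-the-boundary statement into weak coercivity. The key point is a dichotomy, due to Ancona in the coercive case and to Hamenst\"adt in general for operators subordinated to $\W^s$ on a cocompact negatively curved space: the operator $\LL^\rho_\xi$ on $\M\times\{\xi\}$ is either weakly coercive (equivalently, admits a positive $\e$-superharmonic function, equivalently has a spectral gap in the sense that the bottom of the $L^2$-spectrum of $-\LL^\rho_\xi$ is positive), or else the associated diffusion is recurrent / has no Martin boundary behavior forcing convergence to $\partial\M\setminus\{\xi\}$ with positive speed. More precisely, I would invoke \cite[Corollary 3.10]{H2} (or reconstruct its proof): if for some stationary ergodic $m_\rho$ the drift $\ell_\rho(m_\rho)$ is strictly positive, then the generic sample path converges to the boundary with positive linear rate, and by cocompactness this rate is uniform in the lifted base point $\wt\om(0)$; a standard argument then produces, uniformly in $\xi$, a positive function $F_\xi$ on $\M$ with $\LL^\rho_\xi F_\xi+\e F_\xi\le 0$ for a fixed $\e>0$ — for instance by averaging $\exp(\e' b_{x,\xi})$-type barriers along the flow, or by transferring the uniform linear escape rate into a uniform lower bound on the principal eigenvalue via a Green's function estimate. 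The $\G$-equivariance of the family $\{\LL^\rho_\xi\}$ and cocompactness of the $\G$-action on $\M$ are what make the constant $\e$ uniform in $\xi$.

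I expect the main obstacle to be the passage from ``a.e.\ path escapes to the boundary with positive speed'' to ``there exists a genuine positive $\e$-superharmonic function with $\e$ independent of $\xi$.'' The probabilistic escape statement is essentially immediate from Proposition \ref{cocycle-ell}, but weak coercivity is an analytic/spectral condition, and bridging the two requires either (i) a Green's-function / Harnack-chain argument showing that positive linear escape speed forces exponential decay of the Green kernel off the diagonal, hence a spectral gap, or (ii) direct construction of a barrier function, which is delicate because one must control it simultaneously for all $\xi\in\partial\M$ using only the cocompactness of $\M/\G$ and continuity of the coefficients of $\LL^\rho$. This is exactly the content packaged in \cite[Corollary 3.10]{H2}, so the cleanest route is to cite that result and merely supply the verification that its hypothesis — positive linear drift of the Busemann cocycle for some stationary ergodic measure — is met, which is precisely Proposition \ref{cocycle-ell}. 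Thus the proof reduces to: (1) invoke Proposition \ref{cocycle-ell} to get $\ell_\rho(m_\rho)>0\Rightarrow$ a.e.\ convergence $\wt\om(+\infty)\in(\partial\M\setminus\{\xi\})\times\{\xi\}$; (2) invoke the negative curvature and cocompactness to upgrade this to uniform positive escape speed; (3) apply \cite[Corollary 3.10]{H2} to conclude weak coercivity of $\LL^\rho$.
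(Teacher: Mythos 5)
Your proposal is correct and follows essentially the same route as the paper: the paper likewise deduces from Proposition \ref{cocycle-ell} that positive drift $\ell_\rho(m_\rho)>0$ forces $\wt\P^\rho_{x,\xi}$-a.e.\ convergence $\wt\om(+\infty)\in(\partial\M\setminus\{\xi\})\times\{\xi\}$, and then invokes negative curvature, cocompactness, and \cite[Corollary 3.10]{H2} to conclude weak coercivity. Your additional discussion of how the probabilistic-to-analytic bridge works inside Hamenst\"adt's argument goes beyond what the paper records, but the logical skeleton is identical.
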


\subsection{Stable diffusions}  For a weakly coercive $\LL^{\rho}$, we want to understand more about its diffusions. 
Hamenst\"adt developed in \cite{H2} many tools for the study of the foliated diffusions subordinated to the stable foliation $\W^s$, using dynamics and thermodynamical formalism. We review in this subsection her results when applied  for our $\LL^{\rho}.$

For each $\LL^{\rho}$, $\rho\in \Bbb R$, recall  that  $p_\xi^\rho (t,x,y) $ is the fundamental solution of the  equation $\frac{\pp F}{\pp t} = \LL _\xi ^\rho F $. We write $G_\xi ^\rho (x,y)$ for the Green function of $\LL^{\rho}$: for $x,y \in \M,$
\[G_\xi ^\rho (x,y) \; := \; \int _0^\infty p_\xi^\rho (t,x,y) \, dt.\] 
For weakly coercive operators on a pinched negatively curved simply connected manifold, Ancona's Martin boundary theory  (\cite{Anc}) shows the following
\begin{theo} [\cite{Anc}] Assume that the operator $\D^s + \rho \ov X$ is weakly coercive and recall that the sectional curvature of $\M$ is between two constants  $-a^2 $ and $-b^2.$ There exists a constant $C$ such that for any $\xi \in \pp \M$, any three points $x,y,z$ in that order on the same geodesic in $\M$ and such that $d(x,y), d(y,z) \geq 1,$ we have:
\begin{equation}\label{Ancona}  C^{-1} G_\xi ^\rho (x,y) G_\xi ^\rho (y,z)\; \leq \; G_\xi ^\rho (y,z)\; \leq \; C G_\xi ^\rho (x,y)G_\xi ^\rho (y,z). \end{equation}
\end{theo}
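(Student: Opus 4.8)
The statement is a form of the Harnack inequality \emph{along geodesics}: the Green function essentially factorizes along a geodesic, up to a multiplicative constant that is uniform in the base point and in $\xi$. The plan is to follow Ancona's original strategy, exploiting that the operator $\LL^\rho_\xi$ is weakly coercive together with the geometric facts that $\M$ has pinched negative curvature $-a^2\le K\le -b^2$ and that $\G$ acts cocompactly, so that the operator coefficients are (after pulling back by $\G$) uniformly controlled. First I would record the two trivial ``interior'' estimates. The right-hand inequality $G_\xi^\rho(x,z)\le C\, G_\xi^\rho(x,y)\,G_\xi^\rho(y,z)$ is the easy one: it follows from the Harnack inequality for $\LL^\rho_\xi$ applied in a ball of radius $1$ around $y$, since $z\mapsto G_\xi^\rho(x,z)$ is $\LL^\rho_\xi$-harmonic away from $x$ and $y\mapsto G_\xi^\rho(y,z)$ is (adjoint-)harmonic away from $z$; the constant in Harnack is uniform because the coefficients of $\LL^\rho_\xi$ are uniformly bounded and uniformly elliptic on $\M$ by cocompactness (the $\xi$-dependence enters only through the stable leaves, on which the geometry is again uniformly controlled). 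The left-hand inequality $C^{-1}G_\xi^\rho(x,y)\,G_\xi^\rho(y,z)\le G_\xi^\rho(x,z)$ is the substantial one.

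For the hard direction the key device is a \emph{uniform barrier}: one constructs, using weak coercivity, a positive function $F$ with $\LL^\rho_\xi F+\e F\le 0$ and uses it together with the exponential divergence of geodesics in negative curvature to show that a path of the $\LL^\rho$-diffusion from $x$ to $z$ must, with probability bounded below, pass through a bounded neighborhood of the midpoint $y$ before being allowed to reach $z$. Concretely, I would fix a ball $B=B_\M(y,1)$ and show there is a constant $\kappa>0$, independent of $\xi$ and of the configuration, such that the $\LL^\rho_\xi$-harmonic measure of $B$ as seen from $x$ dominates $\kappa\, G_\xi^\rho(x,y)/\sup_B G_\xi^\rho(\cdot,y)$, and symmetrically from $z$; combining these via the strong Markov property at the hitting time of $B$ yields $G_\xi^\rho(x,z)\ge \kappa'\, G_\xi^\rho(x,y)\,G_\xi^\rho(y,z)$. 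The role of negative curvature is to guarantee that any geodesic, or indeed any diffusion path, from the $x$-side to the $z$-side is forced into a uniformly bounded region around $y$: the Gromov product $(\,\cdot\,,\cdot\,)_y$ of the two endpoints is comparable to $0$, so the portion of $\M$ ``between'' the two half-geodesics is a uniformly thin tube through $y$. Weak coercivity then upgrades this geometric confinement into a probabilistic/potential-theoretic confinement by providing a superharmonic majorant that decays on the scale of the tube, so the diffusion cannot ``leak around'' $y$.

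The main obstacle is precisely this confinement step: making quantitative the claim that the Green function cannot be large at $z$ without first being comparably large at the intermediate point $y$, \emph{uniformly} in $\xi\in\pp\M$. This requires (i) a uniform weak Harnack / boundary Harnack estimate for $\LL^\rho_\xi$ near $\partial B_\M(y,R)$ for all large $R$, (ii) a uniform lower bound on the probability that the $\LL^\rho_\xi$-diffusion started outside a large ball about $y$ hits $B_\M(y,1)$ before escaping to infinity on the ``wrong side,'' which is where the $\e$ from weak coercivity enters to dominate the geometric loss $e^{-b\,d(x,y)}$ coming from the curvature upper bound, and (iii) checking that all constants depend only on $a$, $b$, $\e$, the ellipticity and sup bounds of $\LL^\rho$ on $SM$, and not on $\xi$ — this last point being automatic since $\LL^\rho$ descends to the \emph{compact} quotient $SM$. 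Having assembled (i)--(iii), the inequality (\ref{Ancona}) follows by iterating the two-point estimate along the geodesic and taking the worst constant; I would simply cite \cite{Anc} for the detailed execution, noting only that weak coercivity (Corollary \ref{cor-weakly-c}) is exactly the hypothesis that makes Ancona's argument applicable here.
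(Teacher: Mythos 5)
The paper itself gives no proof of this theorem --- it is quoted verbatim from Ancona's work --- so what is to be assessed is whether your sketch of Ancona's argument is structurally sound. It is not: you have interchanged the easy and the hard halves of the inequality. (I read the middle term of (\ref{Ancona}) as $G_\xi^\rho(x,z)$, as you did; the printed $G_\xi^\rho(y,z)$ is a typo.) The genuinely soft half is the \emph{lower} bound $G_\xi^\rho(x,z)\ge C^{-1}G_\xi^\rho(x,y)\,G_\xi^\rho(y,z)$: writing $T_B$ for the hitting time of a small ball $B$ about $y$, excessiveness of $G_\xi^\rho(\cdot,z)$ gives $G_\xi^\rho(x,z)\ge \E_x\bigl[G_\xi^\rho(\wt\om(T_B),z);T_B<\infty\bigr]$, the strong Markov property gives $G_\xi^\rho(x,y)\le \sup_{\pp B}G_\xi^\rho(\cdot,y)\,\P_x(T_B<\infty)$, and Harnack plus cocompactness bound $G_\xi^\rho(\cdot,y)$ on $\pp B$ above and below by universal constants. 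This half needs neither the hypothesis that $y$ lies on the geodesic from $x$ to $z$ nor weak coercivity. The argument you develop at length for your ``hard direction'' is precisely this one --- it terminates in ``$G_\xi^\rho(x,z)\ge\kappa' G_\xi^\rho(x,y)\,G_\xi^\rho(y,z)$'' --- so the barrier and confinement machinery you invoke is attached to the inequality that does not need it.

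The deep half is the \emph{upper} bound $G_\xi^\rho(x,z)\le C\,G_\xi^\rho(x,y)\,G_\xi^\rho(y,z)$, and it does not ``follow from the Harnack inequality applied in a ball of radius $1$ around $y$'': Harnack compares values of a single positive solution at nearby points and cannot manufacture a product of two Green functions; moreover the upper bound is false for general triples, so the geodesic hypothesis must enter essentially. The correct mechanism is the one you describe but misplace: because $y$ lies on $[x,z]$ and the curvature is pinched between $-a^2$ and $-b^2$, a trajectory from $x$ to $z$ avoiding a fixed neighborhood of $y$ must make a long detour, and the $\e$-superharmonic barrier furnished by weak coercivity forces the Green-function contribution of such detours to decay geometrically in their length (Ancona's $\Phi$-chain / boundary Harnack argument); summing over scales yields $G_\xi^\rho(x,z)\le C\,\sup_{\pp B}G_\xi^\rho(\cdot,z)\,\P_x(T_B<\infty)+(\text{small})\le C'\,G_\xi^\rho(x,y)\,G_\xi^\rho(y,z)$. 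Your point (iii), that uniformity in $\xi$ comes from cocompactness, is correct and is what makes the constant $C$ uniform over $\pp\M$. With the two halves relabelled and the Harnack claim for the upper bound withdrawn, deferring to \cite{Anc} for the execution is exactly what the paper does.
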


(In particular, by Corollary \ref{cor-weakly-c}, the inequality (\ref{Ancona}) holds for $\rho$ such that there is an ergodic $\LL^{\rho}$ stationary measure $m_{\rho}$ with $\ell_{\rho}(m_{\rho})>0$.) 
 
Ancona (\cite{Anc}) deduces from (\ref{Ancona}) that the {\it {Martin boundary }} of each weakly coercive operator $\LL_\xi ^\rho $ is the geometric boundary $\pp \M.$ Namely, for any $x,y \in \M, \xi , \eta \in \pp\M,$ there exists a function $K_{\xi, \eta} ^\rho (x,y) $ such that 
\[ \lim\limits _{z \to \eta} \frac{G_\xi ^\rho (y,z) }{G_\xi ^\rho (x,z )} \; = \; K_{\xi, \eta} ^\rho (x,y) .\]
 The function $K_{\xi, \eta} ^\rho (x,y)$ is $\LL_{\xi} ^\rho $-harmonic and therefore smooth in $x$ and $y$. Moreover, the functions $(x,\eta) \mapsto K_{\xi, \eta} ^\rho (x,y),\;  (x,\eta)\mapsto \nabla _y K_{\xi, \eta} ^\rho (x,y) \big|_{y = x} $ are H\"older continuous (cf. \cite{H2}, Appendix B).  By uniformity of the constant $C$ in (\ref{Ancona}),  the functions $(x,\xi) \mapsto K_{\xi, \eta} ^\rho (x,y),$    $\xi \mapsto \nabla _y K_{\xi, \eta} ^\rho (x,y) \big|_{y = x} $ are continuous into the space of H\"older continuous functions on $SM$ (see e.g. \cite{LL}).

 Let $\LL ^{\rho \ast}$ be the leafwise formal adjoint of $\LL ^\rho$ (see (\ref{equ-dual})).  Then,  $\LL ^{\rho \ast}$ is subordinated to $\W^s$ and the corresponding Green function $ G_\xi ^{\rho \ast} (x,y)$ is given by $ G_\xi ^{\rho \ast} (x,y) = G_\xi ^{\rho} (y,x).$ In particular, the Green function $G_\xi ^{\rho \ast} (x,y)$ satisfies (\ref{Ancona}) as well and we find, for $\xi, \eta \in \pp\M, x,y \in \M$,   the Martin kernel $K_{\xi , \eta} ^{\rho \ast} (x,y)$ given by:
 \[ K_{\xi , \eta} ^{\rho \ast} (x,y)\; = \; \lim\limits_{z \to \eta} \frac{G_\xi ^{\rho \ast} (y,z)}{G_\xi ^{\rho \ast} (x,z)} \;= \; \lim\limits_{z \to \eta} \frac{G_\xi ^{\rho} (z,y)}{G_\xi ^{\rho} (z,x)}.\]
 Again, the function  $K_{\xi, \eta} ^{\rho\ast} (x,y)$  is $\LL_{\xi} ^\rho $-harmonic and therefore smooth in $x$ and $y$. Moreover, the functions $(x,\eta) \mapsto K_{\xi, \eta} ^{\rho\ast} (x,y),\;  (x,\eta )\mapsto \nabla _y K_{\xi, \eta} ^{\rho\ast} (x,y) \big|_{y = x} $ are H\"older continuous and  the functions  $(x,\xi )  \mapsto K_{\xi, \eta} ^{\rho\ast} (x,y),$   $\xi \mapsto \nabla _y K_{\xi, \eta} ^{\rho\ast} (x,y) \big|_{y = x} $ are continuous into the space of H\"older continuous functions on $SM$. Observe also that the relation (\ref{Ancona}) is satisfied also by the resolvent  $G_\xi ^{\l, \rho \ast} (x,y) \; := \; \int _0^\infty e^{-\l t} p_\xi^\rho (t,y,x) \, dt $, uniformly for $\l  >0$ close to 0 and for $\xi \in \pp\M$, so that we also have:
 \begin{equation}\label{uniformAncona}  K_{\xi, \eta} ^{\rho\ast} (x,y) = \lim\limits _{z \to \eta, \l \to 0^+ } \frac{G_\xi ^{\l, \rho \ast} (y,z)}{G_\xi ^{\l, \rho \ast} (x,z)}. \end{equation}
 
We can use  the function $K_{\xi, \eta} ^{\rho,\ast} (x,y) $ to express the function $\mathsf{K}^\rho $ in (\ref{int.byparts2}).
 \begin{prop} \label{density} Assume $\ell_{\rho} (m_\rho) >0$ and $m_\rho $ is ergodic. Then, the corresponding $\mathsf{K}^\rho $ in (\ref{int.byparts2}) is given by   $ \frac{\mathsf{K}^\rho (y,\xi)}{\mathsf{K}^\rho (x,\xi)} = K_{\xi, \xi } ^{\rho\ast} (x,y). $ 
 \end{prop}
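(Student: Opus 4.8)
The plan is to recognize $\mathsf{K}^\rho(\cdot,\xi)$ as a positive solution of the adjoint equation on a leaf, use Ancona's Martin boundary theory to write it as an integral of Martin kernels against a probability measure $\mu_\xi$ on $\partial\M$, and then show $\mu_\xi=\delta_\xi$ by a time-reversal/drift argument. Concretely, the local description of $m_\rho$ shows that $\mathsf{K}^\rho(\cdot,\xi)$ is a positive function on $\M$ with $\LL^{\rho\ast}_\xi\mathsf{K}^\rho(\cdot,\xi)=0$. Since $\ell_\rho(m_\rho)>0$, Corollary \ref{cor-weakly-c} makes $\LL^\rho$ weakly coercive, and as noted in the text the adjoint Green function $G^{\rho\ast}_\xi(x,y)=G^\rho_\xi(y,x)$ satisfies the Ancona inequality (\ref{Ancona}); hence the Martin boundary of $\LL^{\rho\ast}_\xi$ is $\partial\M$ with minimal kernels $K^{\rho\ast}_{\xi,\eta}(x_0,\cdot)$. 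Fixing the base point $x_0$ with $\mathsf{K}^\rho(x_0,\xi)=1$, the Martin representation produces a unique probability measure $\mu_\xi$ on $\partial\M$ with $\mathsf{K}^\rho(y,\xi)=\int_{\partial\M}K^{\rho\ast}_{\xi,\eta}(x_0,y)\,d\mu_\xi(\eta)$. Once $\mu_\xi=\delta_\xi$ is established, the cocycle relation $K^{\rho\ast}_{\xi,\xi}(x_0,y)/K^{\rho\ast}_{\xi,\xi}(x_0,x)=K^{\rho\ast}_{\xi,\xi}(x,y)$ upgrades the identity at $x_0$ to the claimed $\mathsf{K}^\rho(y,\xi)/\mathsf{K}^\rho(x,\xi)=K^{\rho\ast}_{\xi,\xi}(x,y)$ for all $x,y$.

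To identify $\mu_\xi$, I would time-reverse the stable diffusion. The measure $\mathsf{K}^\rho(y,\xi)\,d\vol_{\wt g}(y)$ is an invariant measure for the $\LL^\rho_\xi$-diffusion on $\M\x\{\xi\}$, and the reversal of this diffusion with respect to it is precisely the Doob $\mathsf{K}^\rho(\cdot,\xi)$-transform of $\LL^{\rho\ast}_\xi$, i.e. the $h$-process for $h=\mathsf{K}^\rho(\cdot,\xi)$. By the Martin representation above, this $h$-process, started from $x_0$, converges at infinity with exit distribution $\mu_\xi$. So the problem reduces to showing that the time-reversed stable diffusion converges to $\xi$ almost surely.

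For that last point I would pass to the two-sided stationary process $(\omega_t)_{t\in\R}$ with one-dimensional marginal $m_\rho$, which is ergodic for the two-sided shift since $m_\rho$ is ergodic; fix a lift and let $\xi$ be the corresponding leaf. Proposition \ref{cocycle-ell} gives $\frac1t\,b_{\omega_0,\xi}(\omega_t)\to\ell_\rho(m_\rho)$ forward in time, and by the proof of that proposition $\E_{m_\rho}^\rho\big(b_{\omega_0,\xi}(\omega_1)\big)=\ell_\rho(m_\rho)$. Writing the backward increment $b_{\omega_0,\xi}(\omega_{-s})$ as a Birkhoff sum of the shifts of $\omega\mapsto b_{\omega_0,\xi}(\omega_{-1})$ (using the cocycle property of the Busemann function) and using stationarity together with the antisymmetry $b_{x,\xi}(y)=-b_{y,\xi}(x)$, the ergodic theorem gives $\frac1s\,b_{\omega_0,\xi}(\omega_{-s})\to-\ell_\rho(m_\rho)<0$, so $b_{\omega_0,\xi}(\omega_{-s})\to-\infty$ linearly. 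On the other hand, the reversed diffusion (being the $h$-process of a weakly coercive operator, or by the argument behind Corollary \ref{cor-weakly-c}) converges to some $\eta\in\partial\M$; since $b_{x_0,\xi}(y)\to+\infty$ whenever $y\to\eta$ with $\eta\neq\xi$ (the Gromov-product estimate on $(\xi,\eta)_{x_0}$), the limit must be $\eta=\xi$. Hence $\mu_\xi=\delta_\xi$, which gives the proposition; this essentially follows Hamenst\"adt's method in \cite{H2} specialized to $\LL^\rho$.

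I expect the main obstacle to be the middle step — making the time-reversal identification rigorous: conservativeness of the $\mathsf{K}^\rho(\cdot,\xi)$-process on the complete leaf, the precise statement that its exit distribution equals the Martin representing measure $\mu_\xi$, and the passage from the statements about $\P^\rho_{m_\rho}$ in Proposition \ref{cocycle-ell} to leafwise statements valid from an arbitrary starting point, which uses ellipticity/irreducibility of $\LL^\rho_\xi$ on $\M\x\{\xi\}$. The geometric inputs (Ancona's theory, antisymmetry and boundary behaviour of Busemann functions) are standard, and the reversal of the sign of the drift is exactly where the hypothesis $\ell_\rho(m_\rho)>0$ is genuinely used.
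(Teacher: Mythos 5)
Your argument is correct in outline, but it follows a genuinely different route from the paper's. The paper never invokes the Martin representation theorem or time reversal: it starts from the ergodic theorem for the shift-invariant measure $\P^\rho_{m_\rho}$, writes $\int F\,dm_\rho$ as an Abelian limit $\lim_{\l\to 0^+}\l\int_0^\infty e^{-\l t}\int p^\rho_\xi(t,x,y)\wt F(y,\xi)\,d\vol_{\wt g}(y)\,dt$, unfolds the inner integral over $\G$ into a sum of resolvent Green functions $G^{\l,\rho\ast}_{\b^{-1}\xi}(y,\b^{-1}x)$, and then uses Harnack together with the uniform Ancona estimate (\ref{uniformAncona}) to show that the ratios of these Green functions converge to the diagonal Martin kernel; testing against indicators of products $\U_y\times A$ identifies $\frac{d\nu^\rho_y}{d\nu^\rho_z}(\eta)=K^{\rho\ast}_{\eta,\eta}(z,y)$ directly. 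The diagonal kernel appears there because the pole $\b^{-1}x$ and the leaf parameter $\b^{-1}\xi$ accumulate at the same boundary point $\eta$. Your route — Martin representation of the leafwise adjoint-harmonic density $\mathsf{K}^\rho(\cdot,\xi)=\int K^{\rho\ast}_{\xi,\eta}(x_0,\cdot)\,d\mu_\xi(\eta)$, followed by identifying the time-reversed diffusion as the $\mathsf{K}^\rho$-transform of $\LL^{\rho\ast}_\xi$ and using the backward drift $\frac1s b_{\om_0,\xi}(\om_{-s})\to-\ell_\rho(m_\rho)<0$ to force $\mu_\xi=\delta_\xi$ — is essentially Hamenst\"adt's method in \cite{H2}, and it explains conceptually \emph{why} the diagonal kernel occurs (the reversed process drifts to $\xi$), at the price of importing the representation theorem for positive solutions, the exit-law theory of $h$-processes, and a careful reversal of a Markov process with respect to an infinite leafwise invariant measure. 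The paper's computation buys a shorter, more self-contained derivation from (\ref{Ancona})/(\ref{uniformAncona}) alone; both use the hypothesis $\ell_\rho(m_\rho)>0$ in the same essential place, namely through Corollary \ref{cor-weakly-c} to get weak coercivity and hence Ancona's theory. The technical points you flag (conservativeness and boundary convergence of the $h$-process, exit law equal to the representing measure, upgrading a.e.\ statements to all starting points via ellipticity and mutual absolute continuity of exit measures) are all standard and fillable, so I see no genuine gap — only more machinery than the paper actually needs.
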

\begin{proof} Let $\nu _x^\rho$ be  the family such that $d \wt m_\rho (x,\xi )  = d\vol_{\wt{g}} (x) d\nu ^\rho_x (\xi ).$ 
 For  $F $ a bounded measurable  function on $SM$,  set $\wt F$ for the $\G$-periodic function on $\M \x \pp \M$ extending $F$. Since $m_\rho$ is ergodic, we have, for $m_\rho$-a.e. $(x,\xi)$,
 \[ \int _{SM} F\, dm_\rho 
\;=\;  \lim\limits _{\l  \to 0^+} \l \int_0^\infty e^{-\l t}\left(\int p^\rho_\xi (t,x,y) \wt F (y,\xi )\, d\vol _{\wt{g}}(y)   \right) \, dt. \]
  The inner integral can be written  
\[ \sum _{\b \in \G} \int p^\rho_\xi (t,x,\b y) \wt F (\b y,\xi )\,  d\vol _{g}(y)  = \sum _{\b \in \G} \int p^\rho_{\b^{-1} \xi} (t,\b^{-1} x,y) \wt F (y, \b^{-1}\xi )\,  d\vol _{g}(y),\] 
where $\vol_{g}$ is the restriction of  $\vol_{\wt{g}}$  on the fundamental domain $M_0$,   so that we have
 \[ \int _{SM} F\, dm_\rho = \lim\limits _{\l  \to 0^+}  \sum _{\b \in \G} \l \int G_{\b^{-1}\xi}^{\l, \rho \ast} (y,\b^{-1}x)  F (y, \b^{-1}\xi)\, d\vol _{g}(y) .\]
By Harnack inequality, all ratios $\frac{ G_{\b^{-1}\xi}^{\l, \rho \ast} (y,\b^{-1}x)} { G_{\b^{-1}\xi}^{\l, \rho \ast} (z,\b^{-1}x)} $ for $y,z \in M_0$ are of the same order as soon as $ d(\b^{-1} x, M_0) \geq 1.$  Choose an open $A \subset \pp\M$ disjoint from $\{\xi \}$. If, for $\b$ large enough, $\b^{-1} \xi \in A,$ then  $\b^{-1} x $ is close to $A$.
  Then, by (\ref{uniformAncona}) and Harnack inequality, given $\e >0$, for all $x \in M_0, \xi \in \pp \M$, for all $\b \in \G$ so that $\b^{-1} x $ is close enough to $\b^{-1} \xi $,  $y'$ close enough to $y$, $z'$ close enough to $z$, 
\[  \frac{G_{\b^{-1}\xi} ^{\l, \rho \ast} (y',\b^{-1}x)}{G_{\b^{-1}\xi }^{\l, \rho \ast} (z',\b^{-1}x)} \; \sim^{1+\e}\;  K_{\b^{-1}\xi , \b^{-1}\xi } ^{\rho,\ast} (z,y),\]
where,  for $a, b\in \Bbb R$,  $a \sim^{1+\e} b$ means  $(1+\e)^{-1}b\leq a\leq (1+\e)b$. 
Consider as  functions $F_y, F_z $ the indicator of $\U_y \x A, \U_z \x A$, where $\U_y,\U_z$ are respectively  small neighborhoods of $y, z$. Then, 
\begin{eqnarray*} \int _{SM} F_y\, dm_\rho = \int _{U_y} \nu ^\rho _{y'}(A) \, d\vol _g(y')& = & \lim\limits _{\l  \to 0^+}  \sum _{\b \in \G, \b^{-1} \xi \in A} \l \int _{U_y} G_{\b^{-1}\xi}^{\l, \rho \ast} (y',\b^{-1}x)\,  d\vol _{g}(y') \\ \int _{SM} F_z\,  dm_\rho = \int _{U_z} \nu ^\rho _{z'}(A) \, d\vol _g(z')& = & \lim\limits _{\l  \to 0^+}  \sum _{\b \in \G, \b^{-1} \xi \in A} \l \int _{U_z} G_{\b^{-1}\xi}^{\l, \rho \ast} (z',\b^{-1}x)\,  d\vol _{g}(z').\end{eqnarray*}
As $\l \to 0^+,$ 
the  $\b$'s  involved in the sums are such that the distance $d(y, \b^{-1} x) , d(z, \b^{-1} x)$ is larger and larger.
It follows that, for  $\nu _z^\rho$-a.e. $\eta$,   \[ \frac{d \nu _y^\rho }{d \nu _z^\rho }(\eta )\; = \; K_{\eta,\eta} ^{\rho,\ast} (z,y).\]
    \end{proof}
    
\begin{cor}\label{ergodicity} Assume  $\ell _\rho(m_{\rho}) >0$ for some ergodic $\LL^\rho $ stationary  measure $m_\rho $. Then, $m_\rho $ is the only $\LL ^\rho $ stationary probability measure. \end{cor}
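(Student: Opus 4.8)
The plan is to upgrade the a.e. identification of Radon--Nikodym derivatives from Proposition~\ref{density} into a genuine uniqueness statement by exploiting the \emph{explicit, continuous} formula for the conditional densities. Recall that any $\LL^\rho$ stationary measure $m$ disintegrates locally, in a flow-box for $\W^s$, as an integral over a transversal of leafwise densities $\mathsf{K}^\rho(\cdot,\xi)$ solving $\LL^{\rho\ast}\mathsf{K}^\rho=0$; the content of Proposition~\ref{density} is that, \emph{for the given ergodic $m_\rho$}, the ratio $\mathsf{K}^\rho(y,\xi)/\mathsf{K}^\rho(x,\xi)$ equals the Martin kernel $K^{\rho\ast}_{\xi,\xi}(x,y)$, which is a fixed H\"older-continuous object not depending on any choices. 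The key observation is that the argument in the proof of Proposition~\ref{density} never used ergodicity of $m_\rho$ in an essential way beyond getting the Birkhoff-type limit; what it really shows is that \emph{any} $\LL^\rho$ stationary measure that charges the relevant flow-boxes has its leafwise conditional densities proportional (leafwise) to $y\mapsto K^{\rho\ast}_{\xi,\xi}(x_0,y)$.

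First I would observe that weak coercivity of $\LL^\rho$ holds under our hypothesis: by Corollary~\ref{cor-weakly-c}, $\ell_\rho(m_\rho)>0$ for the given ergodic $m_\rho$ forces $\LL^\rho$ to be weakly coercive, so Ancona's theory and in particular the Martin kernel $K^{\rho\ast}_{\xi,\eta}$ are available and the functions $(x,\xi)\mapsto K^{\rho\ast}_{\xi,\xi}(x_0,x)$ are continuous. Next, let $m'$ be an arbitrary $\LL^\rho$ stationary probability measure; by Garnett's theorem (Theorem~\ref{Ga-theo}) it suffices to treat $m'$ ergodic, since the ergodic decomposition of $m'$ consists of stationary ergodic measures and uniqueness among ergodic ones gives uniqueness outright. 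For an ergodic $m'$, apply the resolvent computation in the proof of Proposition~\ref{density} verbatim: the chain of equalities expressing $\int F\,dm'$ as $\lim_{\l\to 0^+}\sum_{\b\in\G}\l\int G^{\l,\rho\ast}_{\b^{-1}\xi}(y,\b^{-1}x)F(y,\b^{-1}\xi)\,d\mathrm{vol}_g(y)$ uses only stationarity plus the explicit form of the diffusion semigroup and the fact that, for $m'$-a.e.\ $(x,\xi)$, the Ces\`aro averages of $p^\rho_\xi(t,x,\cdot)$ converge to $\int F\,dm'$; ergodicity of $m'$ supplies exactly this. Running the same neighbourhood-comparison argument with the uniform Ancona inequality~\eqref{uniformAncona} and the Harnack inequality then yields $\frac{d\nu'^\rho_y}{d\nu'^\rho_z}(\eta)=K^{\rho\ast}_{\eta,\eta}(z,y)$ for the transversal family $\nu'^\rho$ of $m'$, identical to the formula for $m_\rho$.

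Having shown that the leafwise conditional densities of $m_\rho$ and of $m'$ are given by the same explicit kernel, I would conclude as follows. Fix a reference lift $x_0$ and normalise $\mathsf{K}^\rho(x_0,\xi)=1$; then both $m_\rho$ and $m'$ have $\G$-invariant extensions of the form $d\mathrm{vol}_{\wt g}(x)\,d\nu^\rho_x(\xi)$ with the \emph{same} $\G$-equivariant, continuous family $\nu^\rho_x$ in the sense that $\frac{d\nu^\rho_y}{d\nu^\rho_x}(\xi)=K^{\rho\ast}_{\xi,\xi}(x,y)$ pins down the family up to a single overall positive measure on the transversal $S_{x_0}M$. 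That residual freedom is a $\G$-invariant finite measure on the transversal modulo the holonomy of $\W^s$; but the cocycle identity forces this transversal measure to be holonomy-quasi-invariant with the prescribed cocycle $K^{\rho\ast}$, hence it is unique up to a scalar, and the scalar is fixed by $m(SM)=1$. Therefore $m'=m_\rho$.

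The main obstacle I anticipate is the very last step: promoting ``the conditional densities along leaves agree'' to ``the measures agree'' requires controlling the transversal (holonomy-invariant) component, and one must be careful that the disintegration/Fubini manipulations are globally consistent and that the family $\nu^\rho_x$ is genuinely determined, not merely its leafwise ratios. Concretely one needs a uniqueness statement for $K^{\rho\ast}$-conformal densities on $\pp\M$ analogous to the Bowen--Marcus uniqueness used in Remark~\ref{BRergodic}; the cleanest route is to note that a $K^{\rho\ast}_{\cdot,\cdot}$-conformal family of measures on $\pp\M$ is, via the diffusion, the family of harmonic measures / hitting measures of the $\LL^\rho$-diffusion from each point, and these are unique because the diffusion with drift $\rho<V$ and $\ell_\rho(m_\rho)>0$ is transient with a well-defined exit point on $\pp\M\setminus\{\xi\}$ (as already extracted after Corollary~\ref{cor-weakly-c}). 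Tying the probabilistic exit-measure description to the analytically defined $\nu^\rho_x$ is where I expect to spend the real effort; everything else is a careful re-reading of the proof of Proposition~\ref{density} with ``$m_\rho$'' replaced by ``an arbitrary stationary ergodic $m'$.''
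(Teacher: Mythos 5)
Your proposal tracks the paper's own argument for most of its length: the paper likewise applies Proposition \ref{density} to an arbitrary ergodic $\LL^\rho$ stationary measure (weak coercivity being supplied once and for all by the given $m_\rho$ through Corollary \ref{cor-weakly-c}), obtains that its boundary family satisfies $\frac{d\nu_y}{d\nu_z}(\eta)=K^{\rho\ast}_{\eta,\eta}(z,y)$, and the reduction of the general case to the ergodic one via the ergodic decomposition of Theorem \ref{Ga-theo} is implicit there. The divergence, and the gap, is at the last step, which you correctly single out as the crux. The paper concludes by invoking a known uniqueness theorem for $\G$-equivariant families of measures on $\pp\M$ with a prescribed H\"older-continuous Radon--Nikodym cocycle (\cite[Th\'eor\`eme 1.d]{L1}, \cite[Corollary 5.12]{PPS}) --- a shadow-lemma/Gibbs-measure type statement that genuinely uses cocompactness of the $\G$-action. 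Your substitute for that citation does not work as written.

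Two specific problems. First, the claim that the cocycle identity ``forces this transversal measure to be holonomy-quasi-invariant with the prescribed cocycle, hence it is unique up to a scalar'' is precisely the statement to be proved: the cocycle identity by itself does not exclude two mutually singular equivariant families with the same H\"older cocycle, and ruling this out requires the quoted theorem or an equivalent argument. Second, your ``cleanest route'' identifies the family $\nu^\rho_x$ with the hitting measures of the $\LL^\rho$-diffusion, but these are different objects. The family $\nu^\rho_x$ is \emph{transverse} to the foliation: it is a measure in the leaf coordinate $\xi$, and its cocycle is the \emph{diagonal} Martin kernel $K^{\rho\ast}_{\eta,\eta}(z,y)$. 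The exit measure of the diffusion started at $(x,\xi)$ lives on the Martin boundary $\pp\M\setminus\{\xi\}$ of the single leaf $\M\times\{\xi\}$, and its density cocycle is the \emph{off-diagonal} kernel $K^{\rho\ast}_{\xi,\eta}$, $\eta\neq\xi$. Uniqueness of the latter (which does follow from transience and the Martin representation) says nothing directly about uniqueness of the former. To close the argument you must either import the uniqueness theorem for H\"older-equivariant conformal densities that the paper cites, or reprove it, e.g.\ by a shadow-lemma argument showing that any two such families are mutually absolutely continuous with a $\G$-invariant, hence constant, density.
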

\begin{proof} By Proposition \ref{density}, any ergodic $\LL^\rho $ stationary measure is described by a $\Gamma$-equivariant family of measures at the  boundary  $\nu _x$ that satisfies 
\[ \frac{d \nu _y }{d \nu _z}(\eta )\; = \; K_{\eta,\eta} ^{\rho,\ast} (z,y).\]  Since the cocycle depends  H\"older continuously on $\eta$, there is a unique equivariant family with that property (see,  e.g.,  \cite[Th\'eor\`eme 1.d]{L1}, \cite[Corollary 5.12]{PPS}). \end{proof}

 \subsection{ Stochastic  entropy and rigidity}

Let $m_\rho$ be an ergodic  $\LL^{\rho}$ stationary measure, and assume that  $\ell_{\rho} (m_\rho) >0.$   The following theorems are the counterpart of the more familiar random walks properties in our setting.

\begin{theo}[Kaimanovich,  \cite{K1}] Let $m_\rho$ be an ergodic $\LL^{\rho}$ stationary measure, and assume that   $\ell_{\rho} (m_\rho) >0.$   For $\P_{m_\rho} $-a.e. $\om \in C(\R_+, SM)$, the following limits exist
\begin{eqnarray*} h_{\rho}(m_\rho) &=& \lim\limits_{t \to +\infty } -\frac{1}{t} \log p_\xi^\rho (t, \wt \om(0), \wt \om (t) ) \\
&=& \lim\limits_{t \to +\infty } -\frac{1}{t} \log G_\xi ^\rho(\wt \om (0), \wt \om (t)), \end{eqnarray*}
where $\wt \om (t), t\geq 0, $  is a lift of $\om$ to $S\M.$  Moreover, 
\[h_{\rho}(m_\rho)= \int_{SM}\left( \|\nabla ^s \log  {\mathsf{K}}^\rho(x, \xi) \|^2 - \rho B(x,\xi) \right) \, dm_{\rho}.\]
\end{theo}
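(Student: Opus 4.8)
The plan is to establish the two asymptotic identities for the stochastic entropy (existence of the limits and their coincidence) via subadditivity and the Ancona inequality, and then to identify the common value with the integral formula by a derivative-at-$t=0$ computation analogous to Proposition \ref{cocycle-ell}. First I would set $F(\wt\om,t):=-\log p^\rho_\xi(t,\wt\om(0),\wt\om(t))$ and use the Chapman–Kolmogorov equation together with Jensen's inequality to show that $t\mapsto \E^\rho_{m_\rho}F(\cdot,t)$ is subadditive; combined with the integrability bound coming from the on-diagonal heat kernel estimates on $\M$ (Gaussian-type upper bounds, valid since the geometry is uniformly bounded), Kingman's subadditive ergodic theorem applied to the shift $\sigma_t$ on $C(\R_+,SM)$ equipped with the ergodic measure $\P^\rho_{m_\rho}$ yields the existence of the first limit $h_\rho(m_\rho)$, $\P_{m_\rho}$-a.e. and in $L^1$. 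That the limit with the Green function $G^\rho_\xi$ gives the same value follows from the weak coercivity of $\LL^\rho$ (Corollary \ref{cor-weakly-c}), which via Ancona's inequality (\ref{Ancona}) and the spectral gap forces $G^\rho_\xi(x,y)$ and $p^\rho_\xi(t_{x,y},x,y)$ to be comparable up to subexponential factors along the trajectory, where $t_{x,y}$ is the time realizing the large-deviation rate; alternatively one uses that $\int_0^\infty e^{-\lambda t}p^\rho_\xi\,dt$ and $p^\rho_\xi$ have the same exponential decay rate in the space variable.

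Next I would identify $h_\rho(m_\rho)$ with the integral $\int_{SM}(\|\nabla^s\log\mathsf K^\rho\|^2-\rho B)\,dm_\rho$. The key is to write $-\log p^\rho_\xi(t,\wt\om(0),\wt\om(t))$ as an additive cocycle over $\sigma_t$ up to a bounded error. By Proposition \ref{density}, the density $\mathsf K^\rho$ is (a ratio of) the adjoint Martin kernel $K^{\rho\ast}_{\xi,\xi}$, which is $\LL^\rho_\xi$-harmonic and smooth; this lets one relate the logarithmic derivative of the heat kernel to $\nabla^s\log\mathsf K^\rho$. Concretely, the ergodic theorem for the additive functional gives $h_\rho(m_\rho)=\lim_{t\to 0^+}\frac1t\,\E^\rho_{m_\rho}\big(-\log p^\rho_\xi(t,\wt\om(0),\wt\om(t))\big)$, and one computes this short-time limit using the parabolic expansion of the heat kernel: the leading short-time behaviour of $-\log p^\rho_\xi(t,x,y)$ along a diffusion path, after taking expectation against $m_\rho$ and exploiting stationarity, produces exactly the $\LL^\rho$-generator applied to the relevant logarithm, i.e. $\D^s$ and $\rho\ov X$ acting on $\log$ of the density/kernel. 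The Girsanov-type change of measure that relates the $\LL^\rho$-diffusion to its Doob $\mathsf K^\rho$-transform converts the drift term into the quadratic term $\|\nabla^s\log\mathsf K^\rho\|^2$, while the $\rho\ov X$ part contributes $-\rho\int B\,dm_\rho$ after an integration by parts of the form (\ref{int.byparts2}) using $\D_y^sb_{x,\xi}(y)|_{y=x}=B$ and $\nabla^s_wb_v(w)|_{w=v}=-\ov X$.

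More precisely, the cleanest route for the identification is the entropy-as-relative-entropy / Guivarc'h formula: write $p^\rho_\xi(t,x,y)\,d\vol_{\wt g}(y)$ as the law of $\wt\om(t)$ and compare it along the path with the harmonic measure density built from $\mathsf K^\rho$. One has, formally, $-\frac1t\log p^\rho_\xi(t,\wt\om(0),\wt\om(t)) \approx \frac1t\log\frac{d\wt\P^{\rho}_{x}}{d\wt\P^{\rho,\mathsf K}_{x}}(\wt\om|_{[0,t]}) + (\text{boundary term})$, and the Radon–Nikodym derivative of a diffusion against its $\mathsf K^\rho$-Doob transform is $\exp\big(\text{martingale} - \frac12\int_0^t\|\nabla^s\log\mathsf K^\rho\|^2(\wt\om(s))\,ds\big)$ up to the drift correction; the martingale term has zero $L^1$-growth, the quadratic variation term converges by Birkhoff to $\frac12\int\|\nabla^s\log\mathsf K^\rho\|^2\,dm_\rho$ (with the factor absorbed by the normalization of $\D^s$), and the boundary/drift term involving $\rho\ov X$ integrates by (\ref{int.byparts2}) to $-\rho\int B\,dm_\rho$. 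Throughout one uses that $(x,\xi)\mapsto\nabla^s_y\log K^{\rho\ast}_{\xi,\xi}(x,y)|_{y=x}$ is H\"older continuous on $SM$ (stated in the excerpt), so that the integrand is genuinely an $L^1(m_\rho)$ function and all limit interchanges are justified.

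I expect the main obstacle to be the rigorous justification that $-\frac1t\log p^\rho_\xi(t,\wt\om(0),\wt\om(t))$ is, up to a term of zero $L^1$-growth, an additive cocycle whose short-time expected increment is exactly $\lim_{t\to0^+}\frac1t\E^\rho_{m_\rho}(-\log p^\rho_\xi(t,\cdot,\wt\om(t)))$ — this requires controlling the heat kernel uniformly in $\xi\in\pp\M$ near the diagonal and at large distances simultaneously, which is precisely where Ancona's uniform inequality (\ref{Ancona}) and Hamenst\"adt's estimates on the Martin kernels (used via Proposition \ref{density}) do the heavy lifting. The equality of the two limits (heat kernel versus Green function) is a secondary technical point but is standard given weak coercivity: the resolvent satisfies the same Ancona inequality uniformly for $\lambda$ near $0$ (equation (\ref{uniformAncona})), and a Tauberian/large-deviation argument transfers the exponential decay rate from $p^\rho_\xi$ to $G^\rho_\xi$.
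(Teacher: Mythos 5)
Your first paragraph (subadditivity via Chapman--Kolmogorov and Jensen, Kingman's theorem, and the transfer of the exponential rate from $p^\rho_\xi$ to $G^\rho_\xi$ via the resolvent) matches what the paper delegates to \cite{LS2}, Proposition 2.4, and is fine. The problem is in your identification of the limit with the integral formula. You write that ``the ergodic theorem for the additive functional gives $h_\rho(m_\rho)=\lim_{t\to 0^+}\frac1t\,\E^\rho_{m_\rho}\bigl(-\log p^\rho_\xi(t,\wt\om(0),\wt\om(t))\bigr)$'' and then propose to compute this short-time limit by a parabolic expansion. This step fails for two reasons: $-\log p^\rho_\xi(t,\cdot,\cdot)$ is \emph{not} an additive functional of the trajectory (Chapman--Kolmogorov only gives subadditivity in expectation, which is exactly why you needed Kingman rather than Birkhoff in the first place), so the reduction of the $t\to+\infty$ limit to a $t\to 0^+$ limit is not available for it; and the quantity $\frac1t\E\bigl(-\log p^\rho_\xi(t,x,\wt\om(t))\bigr)$ actually diverges as $t\to0^+$, since $-\int p\log p\,d\vol$ behaves like $\frac{d}{2}\log(1/t)$ near $t=0$. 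Your fallback Girsanov/Doob-transform sketch could in principle be made to work, but as written the entire content is hidden in the unspecified ``boundary term,'' which is precisely what must be shown to have zero linear growth.

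The paper's route avoids this by inserting one more reduction that your proposal is missing: using that $\ell_\rho(m_\rho)>0$ (so the path converges to a boundary point $\eta\neq\xi$ and stays at bounded distance from the geodesic $\g_{\xi,\eta}$), together with Harnack and Ancona's inequality (\ref{Ancona})--(\ref{uniformAncona}), one shows that $\log G^\rho_\xi(\wt\om(0),\wt\om(t))$ differs from $\log K^{\rho\ast}_{\xi,\xi}(\wt\om(0),\wt\om(t))$ by a quantity that stays bounded along $\P_{m_\rho}$-a.e.\ trajectory. The Martin kernel, unlike the heat kernel, satisfies the multiplicative cocycle identity, so $\log K^{\rho\ast}_{\xi,\xi}(\wt\om(0),\wt\om(t))$ \emph{is} additive and is dominated by $C\,d(\wt\om(0),\wt\om(t))$; the ergodic theorem then legitimately reduces the asymptotic rate to $-\frac1t\E_{m_\rho}\log K^{\rho\ast}_{\xi,\xi}(\wt\om(0),\wt\om(t))$ for every $t$, and the $t\to0^+$ limit is an honest Dynkin-formula computation giving $-\int\LL^\rho_\xi\log K^{\rho\ast}_{\xi,\xi}\,dm_\rho$. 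From there Proposition \ref{density} identifies $\nabla^s\log K^{\rho\ast}_{\xi,\xi}$ with $\nabla^s\log\mathsf{K}^\rho$, and two applications of (\ref{int.byparts2}) (with $Z=\nabla^s\log\mathsf{K}^\rho$ and $Z=\ov X$) produce the quadratic term and the $-\rho\int B\,dm_\rho$ term --- no Girsanov argument and no short-time heat-kernel asymptotics are needed. You should replace your second and third paragraphs by this cocycle reduction through the Martin kernel.
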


\begin{proof}The first part is proven in details in \cite{LS2}, Proposition 2.4. For the final formula, we follow \cite{LS2}, Erratum. Since the notations are not exactly the same, for the sake of clarity, we give the  main ideas of the proof. We firstly  claim is that,  since $\ell_{\rho} (m_\rho) >0,$  for  $\P_{m_\rho}$-a.e. $\om \in C(\R_+, SM),$
\[
\limsup\limits_{t\to +\infty}\left|\log G_{\xi}^\rho (\wt {\om}(0), \wt {\om}(t))-\log
K^{\rho\ast}_{\xi,\xi}(\wt {\om}(0), \wt {\om}(t))\right|<+\infty.
\]
 Indeed, let $z_t$ be the point on the geodesic ray $\g_{\wt{\om}(t), \xi}$ 
closest to $x$. Then, as $t \to +\infty $,
\begin{equation*}
 G_{\xi}^\rho (\wt {\om}(0), \wt {\om}(t))\asymp G_{\xi}^\rho (z_t, \wt {\om}(t)) \asymp \frac{G_{\xi}^\rho (y, \wt {\om}(t)) }{G_{\xi}^\rho (y, z_t)}
\end{equation*}
for all $y$ on the geodesic going from $\wt {\om} (t)$ to $\xi$ with $d(y, \wt \om(t) )\geq d(y, z_t) +1$,
where $\asymp$ means up to some multiplicative constant independent of $t$. The
first $\asymp$  comes from Harnack inequality using the fact
that $\sup_t d(x, z_t)$ is finite $\P_{m_{\rho}}$-almost  everywhere. (Since $\ell_{\rho}(m_\rho) >0$, for $\P_{m_\rho} $-a.e. $\om \in C(\R_+, SM)$, $\eta=\lim_{t\rightarrow +\infty}\wt \om(t)$ differs from $\xi$ and  $d(x, z_t)$, as $t\rightarrow +\infty$,  converge to  the distance between $x$ and  $\g_{\xi, \eta}$.)
The second $\asymp$  comes from Ancona inequality (\ref{Ancona}).
Replace $\frac{G_{\xi}^\rho (y, \wt {\om}(t)) }{G_{\xi}^\rho (y, z_t)}$
 by its limit
as $y \to \xi $, which is $K^{\rho\ast}_{\xi,\xi}(z_t, \wt {\om}(t)) $ by (\ref{uniformAncona}),
which is itself $\asymp K^{\rho\ast}_{\xi,\xi}(\wt {\om}(0), \wt {\om}(t))$ by
Harnack inequality again.
It follows that,  for  $\P_{m_\rho}^{\rho} $-a.e. $\om \in C(\R_+, SM),$ 
\[ h_\rho(m_{\rho}) \; = \;  \lim\limits_{t \to \infty } -\frac{1}{t} \log K^{\rho\ast}_{\xi,\xi}(\wt {\om}(0), \wt {\om}(t)). \] 
By Harnack inequality, there is a constant $C$ such that $|\log K^{\rho\ast}_{\xi,\xi}(\wt {\om}(0), \wt {\om}(t)) | \leq C d(\wt \om (0), \wt \om (t)) . $ Since $ \log K^{\rho\ast}_{\xi,\xi}(\wt {\om}(0), \wt {\om}(t))$ is additive along the trajectories, and $\P_{m_\rho}^{\rho}$ is shift ergodic, the limit reduces to 
\begin{eqnarray*}  h_\rho(m_{\rho}) &=&  \lim\limits_{t \to 0^+ }-\frac{1}{t} \E_{m_\rho} \log K^{\rho\ast}_{\xi,\xi}(\wt {\om}(0), \wt {\om}(t))\\
&=& -\int_{SM} \left( \D_y^s \log K^{\rho\ast}_{\xi,\xi}(x,y)\big|_{y=x} +\rho <\ov X, \nabla _y^s \log K^{\rho\ast}_{\xi,\xi}(x,y)\big|_{y=x} >_{x,\xi} \right) \, dm_\rho (x,\xi)\\
&=& - \int_{SM} \left( \D^s \log \mathsf{K}^\rho (x,\xi ) +\rho <\ov X, \nabla ^s \log \mathsf{K}^{\rho}> (x,\xi )  \right) \, dm_\rho (x,\xi) ,\end{eqnarray*}
where we used Proposition \ref{density} to replace  $\nabla _y^s \log K^{\rho\ast}_{\xi,\xi}(x,y)\big|_{y=x} $ by  $ \nabla ^s \log \mathsf{K}^{\rho} (x,\xi ).$    Finally, we use (\ref{int.byparts2}) applied to $Z =  \nabla ^s \log \mathsf{K}^{\rho} (x,\xi )$ to write \[ - \int_{SM} \D^s \log \mathsf{K}^\rho (x,\xi ) \, dm_\rho (x,\xi) = \int_{SM} \|  \nabla ^s \log \mathsf{K}^{\rho} (x,\xi ) \|^2 \, dm_\rho (x,\xi)\] and applied to $Z = \ov X$ to write 
\begin{equation}\label{basic}  \int B \, dm_{\rho}  = \int  <\ov X,   \nabla ^s\log \mathsf{K}^\rho >\, dm_{\rho}.\end{equation} 
The formula for the entropy follows.
\end{proof}

\begin{theo}[Guivarc'h, \cite{Gu}]\label{guivarc'h} Assume that $\ell_{\rho} (m_\rho) >0.$  Then,  $h_{\rho}(m_\rho )\; \leq \; \ell_{\rho}(m _\rho) V.$ \end{theo}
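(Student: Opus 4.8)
The plan is to compare, along a typical trajectory, the heat kernel decay rate $h_\rho(m_\rho)$ with the linear drift $\ell_\rho(m_\rho)$ using the fact that the Green function $G_\xi^\rho(x,y)$ is, up to Harnack-controlled multiplicative errors, comparable to $K^{\rho\ast}_{\xi,\eta}(x,y)$ evaluated near the exit point $\eta$. Concretely, fix a lift $\wt\om$ with $\wt\om(0)=(x,\xi)$; since $\ell_\rho(m_\rho)>0$, Proposition~\ref{cocycle-ell} together with Corollary~\ref{cor-weakly-c} guarantees $\LL^\rho$ is weakly coercive and the process converges to $\wt\om(+\infty)=\eta\ne\xi$. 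First I would write the ``fundamental inequality'': for the Markov measure $\P^\rho_{m_\rho}$ one has, for every bounded measurable $f\ge 0$ on paths, $\E^\rho_{m_\rho}\!\left(\frac{f(\s_t\om)}{\text{(one-step density ratio)}}\right)\le$ something, but the cleanest route is the classical Guivarc'h argument: the heat kernel $p^\rho_\xi(t,x,\cdot)\,d\vol_{\wt g}$ is a probability density, so for any measurable set $U_t\subset\M$,
\[
\P^\rho_{x,\xi}\big(\wt\om(t)\in U_t\big)=\int_{U_t} p^\rho_\xi(t,x,y)\,d\vol_{\wt g}(y)\le 1.
\]

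Second, I would take $U_t$ to be (the preimage in $\M$ of) a shadow or horoball-slice of width determined by the two a.s. rates: $\frac1t\log p^\rho_\xi(t,\wt\om(0),\wt\om(t))\to -h_\rho(m_\rho)$ and $\frac1t b_{\wt\om(0)}(\wt\om(t))\to \ell_\rho(m_\rho)$. Thus for any $\e>0$, for $t$ large the path lies in the event where $p^\rho_\xi(t,x,\wt\om(t))\ge e^{-(h_\rho+\e)t}$ and $b_{x,\eta}(\wt\om(t))\ge(\ell_\rho-\e)t$, i.e.\ $\wt\om(t)$ lies in a horoball ``a depth $(\ell_\rho-\e)t$ into $\eta$''. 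The key geometric input is that the volume of the slab $\{y : (\ell_\rho-\e)t\le b_{x,\eta}(y)\le(\ell_\rho-\e)t+1\}$ intersected with a bounded-angle cone toward $\eta$ grows like $e^{Vb}$, i.e.\ like $e^{V(\ell_\rho-\e)t}$ (this is exactly the horospherical volume-growth fact behind Theorems~\ref{Pesin} and \ref{BM-measure}). Combining: the probability that $\wt\om(t)$ lands in this slab is at least (density lower bound) $\times$ (volume) $\gtrsim e^{-(h_\rho+\e)t}\cdot e^{V(\ell_\rho-\e)t}$ on a set of paths of asymptotically full measure, and since this probability is $\le 1$ we get $-(h_\rho+\e)+V(\ell_\rho-\e)\le 0$; letting $\e\to0$ yields $h_\rho(m_\rho)\ge V\ell_\rho(m_\rho)$ --- wait, that is the wrong direction, so in fact the argument must be run the other way.

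Let me reorganize: the correct Guivarc'h-type bound comes from a \emph{Shannon--McMillan--Breiman / entropy counting} argument. The quantity $-\frac1t\log p^\rho_\xi(t,x,\wt\om(t))$ is the information of the path endpoint; integrating $\int p^\rho_\xi(t,x,y)\log p^\rho_\xi(t,x,y)\,d\vol(y)\le 0$ would give a lower bound, but what we want is: the endpoint distribution at time $t$, conditioned to exit near $\eta$, is essentially supported on a set of $\vol$-measure $\asymp e^{V\ell_\rho t}$ (the horospherical slab), and on that set the density is roughly constant $\asymp e^{-h_\rho t}$; since density $\times$ measure of support is $\le 1$ and the conditional mass is $\asymp 1$, we get $e^{-h_\rho t}\cdot e^{V\ell_\rho t}\gtrsim$ (conditional mass) which is bounded below, forcing $h_\rho\le V\ell_\rho$ only if the density is a \emph{lower} bound there --- and indeed by Harnack and Ancona the density near the ``mode'' is comparable to $e^{-h_\rho t}$ from below. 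So the rigorous version estimates $1\ge \int p^\rho_\xi(t,x,y)\,d\vol(y)\ge \int_{\text{slab}\cap\text{typical}} p^\rho_\xi(t,x,y)\,d\vol(y)\gtrsim e^{-h_\rho t-\e t}\cdot e^{V\ell_\rho t - \e t}$, giving the theorem after $\e\to 0$.

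The main obstacle, and the step needing real care, is making precise that the time-$t$ endpoint distribution concentrates on a horospherical slab of the right depth \emph{and} that the density there is genuinely bounded below by $e^{-(h_\rho+\e)t}$ on a large-measure set --- this is where one must invoke the a.s.\ convergence of both $-\frac1t\log p^\rho_\xi$ (Kaimanovich's theorem, already proved above) and $\frac1t b$ (Proposition~\ref{cocycle-ell}), apply Egorov to get uniformity off a small-measure set, and use Ancona's inequality (\ref{Ancona}) plus Harnack to transfer the lower bound from the single point $\wt\om(t)$ to a whole cone-slab neighborhood of comparable volume. A clean alternative I would actually write down: identify the harmonic measure class $\nu^\rho_x$ (from Proposition~\ref{density}, $\frac{d\nu^\rho_y}{d\nu^\rho_x}(\eta)=K^{\rho\ast}_{\eta,\eta}(x,y)$) and compute the Radon--Nikodym cocycle of the exit measure under the shift; the entropy $h_\rho(m_\rho)$ equals the mean of $\log$ of a density ratio which, by the explicit formula $h_\rho=\int(\|\nabla^s\log\mathsf K^\rho\|^2-\rho B)\,dm_\rho$ and $\int B\,dm_\rho=\int\langle\ov X,\nabla^s\log\mathsf K^\rho\rangle\,dm_\rho$ from (\ref{basic}), can be bounded via Cauchy--Schwarz against $\ell_\rho=-\rho+\int B\,dm_\rho$ and the volume-entropy normalization $V$; indeed $\int\|\nabla^s\log\mathsf K^\rho\|^2\,dm_\rho\ge \big(\int\langle\ov X,\nabla^s\log\mathsf K^\rho\rangle dm_\rho\big)^2 = (\int B\,dm_\rho)^2$ is too weak, so the genuine input must be the geometric comparison with $V$ rather than pure Cauchy--Schwarz. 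Thus I expect the cleanest proof is the counting one above, with the horospherical volume growth rate $V$ entering exactly as in the proof of Theorem~\ref{BM-measure}.
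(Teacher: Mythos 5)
Your proposal does not close: the final chain of inequalities
\[
1\ \ge\ \int_{\mathrm{slab}\cap\mathrm{typical}} p^\rho_\xi(t,x,y)\,d\vol_{\wt g}(y)\ \gtrsim\ e^{-(h_\rho+\e)t}\cdot e^{(V\ell_\rho-\e)t}
\]
yields $h_\rho\ge V\ell_\rho$ after letting $\e\to0$, which is the reverse of the claimed inequality. You noticed this reversal in your first attempt and announced a reorganization, but the reorganized version again pairs a \emph{lower} bound on the density with a \emph{lower} bound on the volume of its support, and such a pairing can only ever produce a lower bound on $h_\rho$. It is in fact structurally impossible to have $p\ge e^{-(h_\rho+\e)t}$ on a set of volume $e^{(V\ell_\rho-\e)t}$ unless $V\ell_\rho\le h_\rho$ already holds, since $p$ integrates to $1$; so the geometric input you would need is precisely the conclusion. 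A further soft point: the definition of $V$ only gives the \emph{upper} bound $\vol_{\wt g}B_{\M}(x,r)\le e^{(V+\e)r}$ for $r$ large; the lower bound $e^{(V-\e)r}$ for a fixed slab or annulus at distance $r$ is neither automatic nor needed.

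The correct direction --- and the paper's proof --- uses the two \emph{upper} bounds. Partition $\M$ into cells $A_k$ of uniformly bounded outer and inner diameter and set $q_k(t)=\wt\P^\rho_{x,\xi}[\wt\om(t)\in A_k]$. By the a.s.\ convergence of $-\frac{1}{t}\log p^\rho_\xi(t,x,\wt\om(t))$ to $h_\rho(m_\rho)$ (with Harnack to pass from the pointwise density to the mass of a bounded cell), the typical cells satisfy $q_k(t)\le e^{-(h_\rho-\e)t}$; by the a.s.\ convergence of $\frac{1}{t}b_{\wt\om(0)}(\wt\om(t))$ to $\ell_\rho(m_\rho)$, the typical cells lie in $B_{\M}(x,(\ell_\rho+\e)t)$, so their number is at most $Ce^{(\ell_\rho+\e)(V+\e)t}$ by the definition of $V$. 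Summing $q_k(t)$ over these cells --- a quantity bounded below by $\tfrac12$ for $t$ large --- gives $\tfrac12\le Ce^{-(h_\rho-\e)t}e^{(\ell_\rho+\e)(V+\e)t}$, hence $h_\rho\le\ell_\rho V$. (The paper phrases this as a Shannon-entropy count, $-\sum_{k\in N_t}q_k\log q_k\le\log\#N_t$ with $\#N_t\le Ce^{(\ell_\rho+\e)(V+\e)t}$, which is the same estimate.) Your first, discarded paragraph --- heat kernel integrates to $1$, restrict to the ball of radius $(\ell_\rho+\e)t$ --- was the right starting point; what went wrong was flipping the density bound from the upper bound $p\le e^{-(h_\rho-\e)t}$ to a lower bound.
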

 \begin{proof} Fix  $(x,\xi ) \in S\M $  such that $\frac{1}{t} b_{x,\xi} (\wt \om (t) ) \to \ell_{\rho} (m_\rho)$  and $ -\frac{1}{t} \log p_\xi^\rho (t, \wt \om(0), \wt \om (t) ) \to h_{\rho}(m_\rho )$, $\wt{\P}^\rho _{x,\xi} $-a.e. as $t \to +\infty$. There is a constant $\wt C$  depending only on the curvature bounds such that one can find a partition $\mathcal A = \{ A_k, k\in \N \} $  of $\M$ such that the sets $A_k$ have diameter  at most  $\wt C$ and inner diameter at least $1$. Set for $k \in \N, t >0$, $q_k^\rho (t) := \wt{\P}^\rho _{x,\xi} [ \wt \om (t) \in A_k ].$ The family $\{q_k^\rho (t), k\in \N \}$ is a probability on $\N$ with the property that, with high probability, $q_k^\rho (t) \leq e^{-t (h_{\rho}(m_\rho ) -\e)}$ and  $k \in N_t,$ where  $N_t : = \{k ; A_k \subset B(x, t( \ell_{\rho}(m _\rho) +\e)) \}.$ Then,
 \[ -\sum_{k \in N_t}  q_k^\rho (t) \log q_k^\rho (t) \leq \sum_{k \in N_t}  q_k^\rho (t) \x \log \#N_t.\]
 Since $\# N_t \leq C e^{t (\ell_{\rho} (m_\rho)+\e)(V+ \e)}$, for some constant $C$, Theorem  \ref{guivarc'h} follows.
 \end{proof}
 
 \begin{theo}\label{cuter} Assume that $\ell_{\rho} (m_\rho) >0.$ Then,  $ \int B \, dm_{\rho} \leq V$,  with equality in this  inequality only when $(M,g)$ is locally symmetric.\end{theo}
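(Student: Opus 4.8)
The plan is to combine, for the ergodic stationary measure $m_\rho$, the three quantitative facts already in hand — the linear drift identity $\ell_{\rho}(m_\rho)=-\rho+\int B\,dm_\rho$ of Proposition~\ref{cocycle-ell} (equation~(\ref{ell})), the entropy formula $h_{\rho}(m_\rho)=\int(\|\nabla^s\log\mathsf{K}^\rho\|^2-\rho B)\,dm_\rho$ of Kaimanovich's theorem, and Guivarc'h's inequality $h_{\rho}(m_\rho)\le\ell_{\rho}(m_\rho)V$ (Theorem~\ref{guivarc'h}) — with one Cauchy--Schwarz step coming from the identity~(\ref{basic}). Write $\beta:=\int_{SM}B\,dm_\rho$ (so $\beta>0$, since $B>0$ in negative curvature) and $N:=\int_{SM}\|\nabla^s\log\mathsf{K}^\rho\|^2\,dm_\rho\ge 0$; by~(\ref{ell}) we have $\ell_{\rho}(m_\rho)=\beta-\rho>0$. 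Since $\|\ov X\|\equiv 1$ and $m_\rho$ is a probability measure, Cauchy--Schwarz applied to~(\ref{basic}) gives $\beta=\int\langle\ov X,\nabla^s\log\mathsf{K}^\rho\rangle\,dm_\rho\le N^{1/2}$, hence $\beta^2\le N$. Feeding in the entropy formula and Guivarc'h's inequality yields
\[
\beta^2 \;\le\; N \;=\; h_{\rho}(m_\rho)+\rho\beta \;\le\; \ell_{\rho}(m_\rho)V+\rho\beta \;=\; (\beta-\rho)V+\rho\beta ,
\]
i.e.\ $\beta(\beta-\rho)\le V(\beta-\rho)$; dividing by $\beta-\rho>0$ gives $\int B\,dm_\rho=\beta\le V$, the asserted inequality.

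For the rigidity statement, suppose $\beta=V$. Then $(\beta-\rho)V+\rho\beta=\beta^2$, so every inequality in the displayed chain is an equality; in particular $\beta^2=N$, which is exactly equality in the Cauchy--Schwarz estimate. Because $\|\ov X\|\equiv 1$, this forces $\nabla^s\log\mathsf{K}^\rho=\beta\,\ov X$ for $m_\rho$-a.e.\ $v$; and since $\nabla^s\log\mathsf{K}^\rho$ is continuous on $SM$ — it is the H\"older-continuous function $\nabla_y\log K^{\rho\ast}_{\xi,\xi}(x,y)\big|_{y=x}$ discussed above, using Proposition~\ref{density} — the identity $\nabla^s\log\mathsf{K}^\rho=\beta\,\ov X$ holds on all of ${\rm supp}(m_\rho)$. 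I would then expand the density equation $\LL^{\rho\ast}\mathsf{K}^\rho=0$ (see~(\ref{equ-dual})): writing $u:=\log\mathsf{K}^\rho$ and using ${\rm Div}^s\ov X=-B$, it becomes $\D^s u+\|\nabla^s u\|^2+\rho B-\rho\langle\nabla^s u,\ov X\rangle=0$, and substituting $\nabla^s u=\beta\ov X$ (so $\|\nabla^s u\|^2=\beta^2$, $\langle\nabla^s u,\ov X\rangle=\beta$, $\D^s u={\rm Div}^s(\beta\ov X)=-\beta B$) collapses it to $(\rho-\beta)(B-\beta)=0$ on ${\rm supp}(m_\rho)$. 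As $\rho\ne\beta$ (because $\ell_{\rho}(m_\rho)>0$), we get $B\equiv\beta=V$ on ${\rm supp}(m_\rho)$. Finally, the conditional measures of $m_\rho$ along stable leaves are everywhere-positive densities, so ${\rm supp}(m_\rho)$ is a nonempty closed $\W^s$-saturated set; since stable leaves are dense in $SM$, ${\rm supp}(m_\rho)=SM$, hence $B$ is constant on $SM$ and Theorem~\ref{BCG} gives that $(M,g)$ is locally symmetric.

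The part I expect to demand the most care is the equality case. The inequality $\beta\le V$ itself is a two-line manipulation of results already established. The delicate points are: reading equality in the (two-stage — pointwise, then in $L^2(m_\rho)$) Cauchy--Schwarz estimate correctly as the pointwise condition $\nabla^s\log\mathsf{K}^\rho=\beta\,\ov X$; promoting an $m_\rho$-almost-everywhere identity to a genuine pointwise identity on ${\rm supp}(m_\rho)$ via continuity of the relevant Martin-kernel gradients; and upgrading ``$B=V$ on ${\rm supp}(m_\rho)$'' to ``$B=V$ on $SM$'', for which one needs both the full support of $m_\rho$ (positivity of the leafwise densities together with minimality of the stable foliation) and the continuity of $B$, so that Theorem~\ref{BCG} applies. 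The intermediate algebra with $\LL^{\rho\ast}\mathsf{K}^\rho=0$ is routine once $\nabla^s u=\beta\,\ov X$ is in hand.
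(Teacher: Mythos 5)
Your derivation of the inequality $\int B\,dm_\rho\leq V$ is exactly the paper's: Cauchy--Schwarz applied to (\ref{basic}), the entropy formula, and Guivarc'h's bound $h_\rho\leq\ell_\rho V$, then division by $\ell_\rho=\beta-\rho>0$. For the equality case your route genuinely diverges. The paper, after extracting $\nabla^s\log\mathsf{K}^\rho=\tau(\rho)\ov X$ from the Cauchy--Schwarz equality, feeds this into (\ref{int.byparts2}) to conclude (as in Corollary \ref{BRstrongstable}) that $\D^{ss}$ is symmetric for $m_\rho$, invokes Bowen--Marcus unique ergodicity (Remark \ref{BRergodic}) to identify $m_\rho=m_{BR}$, and only then computes $0=\LL^{\rho*}_y e^{-Vb_{x,\xi}(y)}\big|_{y=x}=(V-B)(V-\rho)$ with the \emph{explicit} Burger--Roblin density. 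You instead pin down the constant as $\tau=\beta=V$ immediately (which is cleaner than the paper, where $\tau(\rho)=V$ is only obtained after the identification with $m_{BR}$), substitute $\nabla^s\log\mathsf{K}^\rho=V\ov X$ directly into the abstract density equation $\LL^{\rho*}\mathsf{K}^\rho=0$, and get $(\rho-V)(B-V)=0$ leafwise; the algebra is the same as the paper's final display, just performed with $\mathsf{K}^\rho$ in place of $e^{-Vb}$. What your shortcut buys is that you bypass the $\D^{ss}$-symmetry argument and Bowen--Marcus entirely; what it costs is that you must separately justify $\mathrm{supp}(m_\rho)=SM$, which you do correctly via Harnack positivity of the leafwise densities plus density of each weak stable leaf (minimality of the boundary action of the cocompact group) --- a standard but unstated fact that the paper's route absorbs into Remark \ref{BRergodic}. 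Your attention to the two-stage equality condition in Cauchy--Schwarz, and to promoting the a.e.\ identity to whole leaves before differentiating it, is exactly where the care is needed, and you have handled both points correctly.
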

\begin{proof}  Recall equation (\ref{basic}): $ \int B \, dm_{\rho}  = \int  <\ov X,   \nabla ^s\log \mathsf{K}^\rho >\, dm_{\rho} ,$ so that, by Schwarz inequality,
\[ \left(\int B \, dm_{\rho}\right)^2 \leq \int_{SM}\|\nabla ^s \log  \mathsf{K}^\rho_{x, \xi} \|^2 \, dm_{\rho},\]
with equality only if $  \nabla ^s\log \mathsf{K}^\rho = \tau (\rho) \ov X$ for some real number $\tau (\rho).$  Abbreviate $h_{\rho}(m_{\rho})$, $\ell_{\rho}(m_{\rho})$ as $h_{\rho}, \ell_{\rho}$.   
We write  \[h_\rho = \int_{SM}\left( \|\nabla ^s \log  \mathsf{K}^\rho_{x, \xi} \|^2 - \rho B(x,\xi) \right) \, dm_{\rho} \geq \left( \int B \, dm_{\rho}\right)^2 - \rho \int B \, dm_{\rho}= \ell_\rho \int B\, dm_\rho.\] We indeed have $ \int B \, dm_{\rho} \leq V$, with equality only if $  \nabla ^s\log \mathsf{K}^\rho = \tau (\rho) \ov X$ for some real number $\tau (\rho).$  Then, equation  (\ref{int.byparts})  holds with $V$ replaced by $\tau (\rho)$. The proof of Corollary \ref{BRstrongstable} applies and the operator $\D^{ss}$ is symmetric  with respect to the measure $m_\rho$. By  Remark \ref{BRergodic}, $m_\rho = m_{BR}.$  Then, $\tau (\rho ) = V$  and from $\int B \, dm_\rho = \int B \, dm_{BR} = V$ and  $\ell_{\rho} (m_\rho) >0, $ we have $ \rho \neq V.$  
 We have $ 0= \LL_y ^{\rho \ast} e^{-V b_{x,\xi } (y)}\big|_{y = x} = (V-B(x, \xi))(V-\rho). $  It follows that $B= V$ is constant. By Theorem \ref{BCG}, the space $(M,g)$ is locally symmetric.
\end{proof} 

The conclusion in Theorem \ref{cuter} actually holds true for all $\rho<V$ due to the following.

\begin{prop}\label{rho<V} Let $\rho\in \Bbb R$. There is some $\LL^{\rho}$ stationary ergodic measure $m_{\rho}$ such that $\ell_{\rho} (m_\rho) >0$ if, and only if, $\rho<V$.  Moreover, the measures $m_\rho $ weak* converge to $m_{BR} $ as $\rho \nearrow V.$  
\end{prop}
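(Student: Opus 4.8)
\textbf{Proof proposal for Proposition \ref{rho<V}.}

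The plan is to prove the two directions of the equivalence separately, then establish the weak* convergence by a standard compactness-plus-identification argument. For the ``only if'' direction, suppose $m_\rho$ is an $\LL^\rho$ stationary ergodic measure with $\ell_\rho(m_\rho)>0$. By Proposition \ref{cocycle-ell}, $\ell_\rho(m_\rho) = -\rho + \int B\,dm_\rho$, so $\int B\,dm_\rho > \rho$. On the other hand, Corollary \ref{Ruelle} (or rather its underlying estimate, combined with $P(-B)=0$) gives $\int B\,dm \geq \inf_{m\in\mathcal M}\int B\,dm$, but this is not quite enough; instead I would invoke Theorem \ref{cuter}, which under the hypothesis $\ell_\rho(m_\rho)>0$ gives $\int B\,dm_\rho \leq V$. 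Hence $\rho < \int B\,dm_\rho \leq V$, and in fact $\rho<V$ strictly since if $\rho=V$ we would land in the equality case of Theorem \ref{cuter}, forcing $(M,g)$ locally symmetric and $B\equiv V$, whence $\ell_V(m_V) = V - V = 0$, contradicting positivity. So $\rho<V$.

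For the ``if'' direction, fix $\rho<V$. By Theorem \ref{Ga-theo} there is an $\LL^\rho$ stationary measure, and its ergodic components are $\LL^\rho$ stationary ergodic measures; pick one, call it $m_\rho$. We must show $\ell_\rho(m_\rho) = -\rho + \int B\,dm_\rho > 0$, i.e. $\int B\,dm_\rho > \rho$. Here I would use the variational/thermodynamic input: the key observation is that for $\rho = V$ we have the Burger-Roblin measure with $\int B\,dm_{BR}=V$ (Corollary after Theorem on integration by parts), and more generally one expects $\rho\mapsto \int B\,dm_\rho$ to stay above $\rho$ for $\rho<V$. Concretely, I would argue by a continuity/monotonicity comparison: the stationary measures $m_\rho$ vary (semi)continuously with $\rho$ by the compactness in Theorem \ref{Ga-theo} and continuity of coefficients, and at $\rho=V$ the value $\int B\,dm_{BR}=V$ is attained; combined with the fact (which I would extract from Hamenst\"adt's framework, or from the linear-drift estimate $\ell_\rho \geq -\rho + \inf B$ together with a lower bound on $\int B\,dm_\rho$ coming from weak coercivity failing only at the critical parameter) one gets $\int B\,dm_\rho > \rho$ strictly for $\rho<V$. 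The cleanest route is probably: show $\rho\mapsto \ell_\rho(m_\rho)$ is continuous and decreasing in a suitable sense, equals $0$ at $\rho=V$, and use that the only way $\ell_\rho$ could be $\leq 0$ for some $\rho<V$ would, via Corollary \ref{cor-weakly-c} and Ancona's theory, propagate a contradiction with the behavior at the boundary of the coercivity range.

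For the weak* convergence as $\rho\nearrow V$: by Corollary \ref{ergodicity}, for each $\rho<V$ the measure $m_\rho$ is the unique $\LL^\rho$ stationary probability measure, so there is no ambiguity. Take any sequence $\rho_n\nearrow V$; by compactness of the space of probability measures on $SM$, extract a weak* limit $m_\ast$ of $m_{\rho_n}$. Since the coefficients of $\LL^{\rho}=\D^s+\rho\ov X$ depend continuously (indeed affinely) on $\rho$, the stationarity condition $\int \LL^{\rho_n}F\,dm_{\rho_n}=0$ passes to the limit to give $\int \LL^V F\,dm_\ast = 0$ for all $F\in C^\infty(SM)$, i.e. $m_\ast$ is $\LL^V$ stationary. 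But $\LL^V = \D^s+V\ov X$, and by Corollary \ref{BRstrongstable} together with Remark \ref{BRergodic}, $m_{BR}$ is the unique probability measure symmetric (hence stationary) for $\D^{ss}$; since any $\LL^V$ stationary measure is $\D^{ss}$ stationary (this is exactly the content of the computation in Corollary \ref{BRstrongstable}, which only used the integration-by-parts identity and does not require $\ell>0$), we conclude $m_\ast = m_{BR}$. As every subsequential limit is $m_{BR}$, the full family converges.

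\textbf{Main obstacle.} The genuinely hard part is the forward implication ``$\rho<V \Rightarrow \ell_\rho(m_\rho)>0$'': producing a stationary measure is free, but showing its linear drift is strictly positive requires real work, because $\ell_\rho = -\rho + \int B\,dm_\rho$ and one has no a priori lower bound on $\int B\,dm_\rho$ beyond $\inf B$, which may well be smaller than $\rho$. The resolution must exploit something special about the family $\LL^\rho$ — presumably a convexity property of $\rho\mapsto$ (some pressure-type functional) whose critical exponent is exactly $V$, or Hamenst\"adt's identification of the critical drift parameter with the volume entropy. Establishing this sharp threshold, and ruling out the degenerate case $\ell_\rho=0$ for $\rho<V$ (which would otherwise make $\LL^\rho$ fail to be weakly coercive and break the Ancona machinery), is where the main effort lies.
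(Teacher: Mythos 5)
Your ``only if'' direction is correct and coincides with the paper's observation: $\ell_\rho(m_\rho)=-\rho+\int B\,dm_\rho>0$ together with Theorem \ref{cuter} gives $\rho<\int B\,dm_\rho\le V$. The rest of the proposal has two genuine gaps. The first is the implication $\rho<V\Rightarrow\ell_\rho(m_\rho)>0$, which you correctly flag as the main obstacle but do not prove; ``continuity/monotonicity comparison'' and an appeal to ``Hamenst\"adt's framework'' are placeholders, not arguments. The paper closes this by a critical-parameter argument: let $\rho_0$ be the smallest parameter admitting a stationary measure with $\ell\le0$ (it exists since $\ell_\rho\ge-\rho+\inf B>0$ for $\rho$ near $-\infty$, and the bad set is closed under weak* limits), take $\rho_n\nearrow\rho_0$ with $\ell_{\rho_n}(m_{\rho_n})>0$, and show $\rho_0=V$. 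The decisive quantitative input you are missing is Guivarc'h's inequality $h_{\rho_n}\le\ell_{\rho_n}V$ (Theorem \ref{guivarc'h}): since $\ell_{\rho_n}\to\ell_{\rho_0}(m)=0$ for any weak* limit $m$, also $h_{\rho_n}\to0$; feeding this into the entropy formula $h_{\rho_n}=\int(\|\nabla^s\log\mathsf{K}^{\rho_n}\|^2-\rho_nB)\,dm_{\rho_n}$ forces $\nabla^s\log\mathsf{K}^{\rho_n}-\bigl(\int B\,dm_{\rho_n}\bigr)\ov X\to0$ in $L^2(m_{\rho_n})$, so that $\D^{ss}$ is symmetric for $m$, whence $m=m_{BR}$ by Bowen--Marcus and $\rho_0=\int B\,dm_{BR}=V$. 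This one argument simultaneously proves the threshold statement and the convergence $m_\rho\to m_{BR}$.

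The second gap is in your weak* convergence step, which is not merely incomplete but rests on a false premise. You assert that ``any $\LL^V$ stationary measure is $\D^{ss}$ stationary'' because Corollary \ref{BRstrongstable} ``only used the integration-by-parts identity''. But identity (\ref{int.byparts}) is a property of $m_{BR}$ specifically, derived from its explicit leafwise density $e^{-Vb_{x_0,\xi}(y)}$; for an arbitrary $\LL^V$-stationary measure the available identity is (\ref{int.byparts2}), with $V\ov X$ replaced by $\nabla^s\log\mathsf{K}^V$, and there is no a priori reason these should coincide. Your claim amounts to the uniqueness of the $\LL^V$-stationary measure, which the paper explicitly states is an open problem (and notes that the second assertion of the proposition would indeed follow from it). So soft ``stationarity passes to the limit'' reasoning cannot identify the limit; one must use the quantitative entropy--drift argument above.
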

\begin{proof} Let $\rho _0$ be such that there is some $\LL ^{\rho _0} $ stationary measure $m_{\rho _0} $  with $\ell_{\rho_0} (m_{\rho _0} ) \leq 0$,  but such that there exist $\{ \rho _n\}_{n \in \N}$  with $\lim _{n \to +\infty } \rho _n = \rho _0 $  and  $\ell_{\rho_n} (m_{\rho _n} ) > 0$  (we know that $m_{\rho _n} $ is unique by Corollary \ref{ergodicity}).  Observe that by equation (\ref{ell}), $\ell _{\rho } >0$ for $\rho $ sufficiently close to $-\infty$. On the other hand, if  $\ell _{\rho _n}(m_{\rho_n}) >0$, we must have  $\rho _n < V$  by equation (\ref{ell}) and Theorem \ref{cuter}. Therefore one can choose  $\rho _0$ and $\rho _n$ with those properties. 
Let $m$ be a weak* limit of the measures $m_{\rho _n}$.  We are going to show that $m = m_{BR}$ and that $\rho _0 = V$.

Observe that $\ell _{\rho _0} (m) \leq 0$ since otherwise $m$ is the only stationary measure and we cannot have $\ell _{\rho _0}(m_{\rho _0} ) \leq 0$   for some other $\LL ^{\rho _0} $ stationary measure $m_{\rho _0}$.  On the other hand, $\ell_{\rho_0} (m ) \geq 0$  by continuity, so  $\ell_{\rho_0} (m) = 0$ and $\lim_{n\to +\infty } \ell_{\rho_n} (m_{\rho _n})=0.$   By Theorem \ref{guivarc'h},  $\lim _{n\to +\infty }h_{\rho_n}(m_{\rho _n})=0 $ as well. 
 We have 
\begin{eqnarray*}  0 &= & \lim\limits _{n\to +\infty }h_{\rho_n}(m_{\rho _n}) \;=\;  \lim\limits_{n\to +\infty}  \int_{SM}\left( \|\nabla ^s \log  {\mathsf{K}}^{\rho_n}(x, \xi) \|^2 - \rho_n  B(x,\xi) \right) \, dm_{\rho_n} \\ &=&  \lim\limits_{n\to +\infty}  \int_{SM}\left( \|\nabla ^s \log  {\mathsf{K}}^{\rho_n}(x, \xi) \|^2 - \rho_n  <\ov X,   \nabla ^s\log \mathsf{K}^{\rho _n}>\right)\, dm_{\rho _n}.
\end{eqnarray*}
Write  $Z_n := \nabla ^s \log  {\mathsf{K}}^{\rho_n}(x, \xi) -\left( \int_{SM} <\ov X,   \nabla ^s\log \mathsf{K}^{\rho _n}>\, dm_{\rho _n}\right)\ov X.$   We have  
\begin{align*} & \lim\limits_{n \to +\infty } \int_{SM}  \|Z_n \|^2 \, dm_{\rho_n} = \lim\limits_{n\to +\infty}  \int_{SM}\left( \|\nabla ^s \log  {\mathsf{K}}^{\rho_n}(x, \xi) \|^2 \right)\, dm_{\rho _n} - \left( \int_{SM} B \, dm_{\rho _n}\right) ^2\\
 &\ \ = \lim\limits_{n\to +\infty} \left( h_{\rho_n} (m_{\rho _n}) - \ell_{\rho_n} (m_{\rho _n}) \int_{SM} B \, dm_{\rho _n}\right)\end{align*} 
and so $ \lim_{n \to +\infty } \int_{SM}  \|Z_n \|^2 \, dm_{\rho_n} =  0.$ In other words, equation  (\ref{int.byparts})  holds with $V$ replaced by $\int_{SM} B \, dm_{\rho _n}$ with an error $\int_{SM} < Z, Z_n> \, dm_{\rho _n}$. The proof of Corollary \ref{BRstrongstable} applies and the operator $\D^{ss}$ is symmetric  with respect to the measure $m_{\rho _n},$ up to an error which goes to 0 as $n \to +\infty.$ It follows that  the operator $\D^{ss}$ is symmetric  with respect to the limit measure $m.$ By  Remark \ref{BRergodic}, $m= m_{BR}.$ Since 
$\ell _{\rho _0}(m)= 0$, $\rho _0 = \int_{SM} B \, dm = \int_{SM} B \, dm_{BR} = V.$ \end{proof}

\begin{remark}Anderson and Schoen (\cite{AS}) described the Martin boundary for the Laplacian on a simply connected manifold with pinched negative curvature. Regularity of the Martin kernel in the \cite{AS} proof yields,  in the cocompact case, nice properties of  the harmonic measure (i.e., the stationary measure for $\LL ^0$). This was observed by  \cite{H}, \cite{Ka3} and \cite{L2}.  Ancona (\cite{Anc}) extended \cite{AS}'s results to the general weakly coercive operator and proved the basic inequality (\ref{Ancona}). This allowed Hamenst\"adt to consider the general case that  $\LL=\D^s +Y$, with $Y^*$, the dual of $Y$ in the cotangent bundle to the stable foliation over $SM$, satisfying  $dY^*=0$ leafwisely (\cite{H2}).   The  criterion she obtained for the existence of a $\LL$ stationary ergodic measure $m$ with $\ell_{\LL}(m):=\int_{M_0\times \partial\M} \left(-< Y, \overline{X}> +B\right)\ dm>0$ is $P(-<\ov X,Y>) >0$. Our presentation follows \cite{H2}, with a few simplifications when $Y = \rho  \ov X.$  Theorem \ref{cuter} was shown by Kaimanovich (\cite{Ka2}) in the case $\rho = 0$.  From \cite{H2}, Theorem A (2), the measure $m_{BR}$ is the only symmetric measure for $\LL^V$. It is not known whether $m_{BR}$ is the only stationary measure for $\LL^V$. The second statement in Proposition \ref{rho<V} would also follow from such a   uniqueness result.
\end{remark}

\

\section{ Stochastic flows of diffeomorphisms and a relative entropy.}
In this section, we introduce a stochastic flow associated to $\LL^\rho.$ In the case of $\rho = 0$ our object has been considered as a {\it{stochastic (analogue of) the geodesic flow}}  (cf. \cite{CE, El}). It gives rise to a random walk on the space of homeomorphisms of a bigger compact manifold and the relative entropy of this random walk of homeomorphisms is our   fourth  entropy. The upper semicontinuity of this entropy as $\rho \to -\infty $ will be used to prove that the measures $m_\rho$ converge to $m_L$ as $\rho \to -\infty $ (see Theorem \ref{cont.} below).

\subsection{Stochastic flow adapted to $\LL^{\rho}$}
Let $O\M$ be the  the orthonormal frame bundle (OFB) of $(\M, \wt g)$:
\[ O\M := \big\{ x \mapsto u(x):\  u(x) = (u^1, \cdots, u^d) \in O(S_x\M) \big\}\] 
and consider $ O\M \x \{\xi \} =: O^sS\M$, the OFB in $T \wt W^s$ and 
$O^sSM := O^sS\M / \G$, the OFB in $T W^s.$
For $v\in S\M, u \in O^s_vS\M$, the {\it { horizontal }} subspace of $T_uO^sS\M$ is the space of directions $w$ such that 
$\nabla _u w = 0$. 

Denote $D^r ( O^sS\M)$ $(r\in {\Bbb N}\ {\rm{or}}\ r=\infty)$  the  space of homeomorphisms $\Phi $ such that 
\[ \Phi (x, u, \xi ) := \left( \phi _\xi (x,u), \xi \right) ,\]
where $\phi_\xi $ is a $C^r $ diffeomorphism of $O\M$,  which depends continuously on $\xi$ in 
$\pp\M.$ 
We use  stochastic flow theory to define a random walk on $D^\infty ( O^sS\M)$.

\begin{theo}[\cite{El}]
 Let $(\Om, \P) $ be a  $\R^d$  Brownian motion (with covariance $2t {\mathbb {Id}}).$ 
For $\P$-a.e. $\om \in \Om,$ all $t >0$, there exists $\Phi ^\rho _t = \big (\phi ^\rho _{\xi ,t} , \xi \big)  \in D^\infty ( O^sS\M)$ such that 
for all $u \in O^sS\M, \, (\om, t) \mapsto u_t = \phi ^\rho _{\xi ,t} (u) $ solves the Stratonovich Stochastic Differential Equation (SDE)
\begin{equation}\label{SDE} du_t = \rho \wh X(u_t) + \sum_{i=1}^d \wh H (u^i_t) \circ dB^i_t , \end{equation}
where $\wh X, \wh H(u^i) $ are the horizontal lifts of $\ov X, u^i \in T_v\wt W^s(v) $ to $T_uO^sS\M$. 
Moreover,  \begin{enumerate} \item[1)] for $\P$-a.e. $\om \in \Om,$ all $t,s >0,$  $\rho <V,$    $ \xi \in \pp\M,$
\[ \phi ^\rho _{\xi ,t+s } (\om) = \phi ^\rho _{\xi ,t} (\s_s \om ) \circ \phi ^\rho _{\xi ,s} (\om),\]
where $\s _s$ is the shift on $\Om$,
\item[2)] for $\P$-a.e. $\om \in \Om ,$ for all $\b \in \G$, all $t >0$, $D\b \circ \phi ^\rho _{\xi ,t} (\om ) = \phi ^\rho _{\xi ,t}(\om) \circ D\b, $  and
\item[3)]  for $\P$-a.e. $\om \in \Om ,$ all $t >0$, $\rho \mapsto \Phi ^\rho _t (\om)$ is continuous in $D^\infty ( O^sS\M)$
and the derivatives are solutions to the  derivative SDE. 
\end{enumerate} \end{theo}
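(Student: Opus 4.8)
The plan is to invoke the general theory of stochastic flows of diffeomorphisms of Kunita and Elworthy, adapted to the foliated setting, where the ambient "manifold" is the orthonormal frame bundle $O^sS\M$ equipped with the family of leafwise vector fields $\rho\wh X, \wh H(u^1),\ldots,\wh H(u^d)$. First I would note that for each fixed $\xi$ the equation \eqref{SDE} lives entirely on $O\M\times\{\xi\}\cong O\M$, is driven by the same $\R^d$-Brownian motion for all $\xi$, and has smooth ($C^\infty$, with all derivatives of controlled growth on $\M$ because the curvature and its derivatives are bounded — $\M$ covers a compact manifold) coefficients. Kunita's theorem then produces, for $\P$-a.e.\ $\om$, a flow $\phi^\rho_{\xi,t}(\om)$ of $C^\infty$ diffeomorphisms of $O\M$ solving \eqref{SDE}, together with the cocycle identity $\phi^\rho_{\xi,t+s}(\om)=\phi^\rho_{\xi,t}(\sigma_s\om)\circ\phi^\rho_{\xi,s}(\om)$, which is assertion (1). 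The completeness of the flow (no explosion in finite time) follows from the uniform geometry of $\M$: the generator is $\LL^\rho_\xi$ lifted horizontally, and one has the a priori bound $\sup_{0\le t\le 1} d(\wt\om(0),\wt\om(t))<\infty$ almost surely, as already used in the proof of Proposition \ref{cocycle-ell}.

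Next I would establish the $\Gamma$-equivariance, assertion (2). The key point is that the vector fields $\wh X$ and $\wh H(u^i)$ are themselves $\Gamma$-invariant: $\ov X$ is the geodesic spray, which is $\Gamma$-invariant by construction, and the horizontal lift is defined via the Levi-Civita connection of $\wt g$, which is $\Gamma$-invariant since $\wt g$ is. Hence if $u_t$ solves \eqref{SDE} started at $u$, then $(D\beta)u_t$ solves \eqref{SDE} started at $(D\beta)u$ \emph{for the same Brownian path} $\om$; by pathwise uniqueness of solutions to the SDE, $(D\beta)\circ\phi^\rho_{\xi,t}(\om) = \phi^\rho_{\xi,t}(\om)\circ(D\beta)$ for $\P$-a.e.\ $\om$, simultaneously for all $\beta\in\Gamma$ (a countable group) and all rational $t$, hence all $t$ by continuity. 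This is what lets the flow descend to $O^sSM=O^sS\M/\Gamma$ and puts $\Phi^\rho_t(\om)$ in $D^\infty(O^sS\M)$; continuity in $\xi$ comes from continuity of the coefficients in $\xi$ together with Kunita's continuity-in-parameters statement for SDEs with parameter-dependent coefficients.

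For assertion (3) I would differentiate \eqref{SDE} in the parameter $\rho$. Since the only $\rho$-dependence is the affine term $\rho\wh X(u_t)$, formally differentiating gives a linear SDE for $\partial_\rho u_t$ (driven by the same Brownian motion, with an inhomogeneous term $\wh X(u_t)\,dt$), and likewise for the spatial derivatives $D\phi^\rho_{\xi,t}$ one gets the usual variational (derivative) SDE obtained by differentiating the coefficients along the flow. The rigorous justification is the standard one: Kunita's theorem already gives that $\phi^\rho_{\xi,t}$ is smooth in the spatial variable with derivatives solving the derivative SDE, and the argument extends verbatim to the extra parameter $\rho$ because the coefficients and all their derivatives are jointly smooth in $(u,\rho)$ with locally uniform bounds. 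Continuity of $\rho\mapsto\Phi^\rho_t(\om)$ in $D^\infty(O^sS\M)$ then follows from continuous dependence of solutions of SDEs on parameters in the appropriate topology (uniform convergence on compacts of the diffeomorphisms together with all derivatives).

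The main obstacle, and the place where some care is genuinely needed, is the interplay between noncompactness of $\M$ and the three "for $\P$-a.e.\ $\om$" statements: one must arrange a \emph{single} null set outside of which the flow exists, is complete, is a diffeomorphism, satisfies the cocycle relation for all $s,t$, is $\Gamma$-equivariant for all $\beta$, and depends continuously on $\rho$ and smoothly on the point — all simultaneously. This is handled, as usual, by proving each property on a countable dense set of times/parameters using pathwise uniqueness and then upgrading by continuity, exploiting that $\Gamma$ is countable and that $\M$ is exhausted by $\Gamma$-translates of the compact fundamental domain $M_0$ so that the uniform bounds on the geometry suffice to control the flow on each compact piece. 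The actual construction and all these verifications are due to Elworthy (\cite{El}), so in the paper I would simply cite that reference and indicate the points above.
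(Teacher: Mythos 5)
The paper gives no proof of this theorem: it is stated as a citation to Elworthy \cite{El}, so there is no in-paper argument to compare against. Your sketch is a correct and standard outline of what that reference provides (Kunita--Elworthy flow theory on the leafwise frame bundle, non-explosion from bounded geometry of the cover, $\Gamma$-equivariance via invariance of the coefficient vector fields plus pathwise uniqueness, and parameter dependence for the $\rho$-continuity), and it ends by deferring to \cite{El} exactly as the paper does.
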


Relation (\ref{SDE}) implies  that for all $(x,\xi, u), u \in OS_x\M,$ the projection of $\phi ^\rho _{\xi ,t} (\om ) (u) $ on $S\M$ is a realization of the 
$\LL^\rho $ diffusion starting from $(x,\xi)$.

Property 1)  and independence of the  increments of the Brownian motion gives that if $\k_{\rho, s} $ is the distribution of $\Phi _{\rho, s} (\om)$ in 
$D^\infty ( O^sS\M)$, we can write
\[ \k_{\rho, s+t} = \k_{\rho, t} \ast \k_{\rho, s}, \]
where $\ast $ denotes the convolution in the group $D^\infty ( O^sS\M)$. So we have a {\it {stochastic flow}}. 
Property 2) yields a stochastic flow on $D^\infty ( O^sSM)$. Property 3) will allow  to control derivatives.

Fix $t>0$. A probability measure $\ov m$ on $O^sSM$ is said to be {\it {stationary }} for $\k _{\rho, t}$,  if for any $F \in C(O^sSM)$, the  set of continuous functions on $O^sSM$, \[ \int _{O^sSM} F(u) \, d\ov m(u) \;=\;
  \int _{D^\infty(O^sSM)}  \int _{O^sSM} F(\Phi u) \, d\k _{\rho, t} (\Phi) \, d\ov m(u).\]

\begin{prop}Fix any $\rho <V, t>0.$ The probability measure $\ov m_\rho$ on $O^sSM$ that  projects to $m_\rho $ on $SM$ and is the normalized Lebesgue measure on the fibers is stationary for $\k _{\rho, t} $. If  we write $O^sSM = O\M \x \pp \M$, then, up to a normalizing constant,   \[ d\ov m_\rho (x, u, \xi ) = d\nu^\rho_x (\xi ) d\vol (x, u).\] \end{prop}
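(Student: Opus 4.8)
The plan is to verify the stationarity identity directly by unwinding the definitions, exploiting that the SDE (\ref{SDE}) is, fiberwise over $\xi$, a standard stochastic flow construction on the orthonormal frame bundle $O\M$ whose projection to $S\M$ realizes the $\LL^\rho$-diffusion. First I would pass to the $\G$-invariant picture: writing $O^sS\M = O\M\x\pp\M$, the measure $\wt{\ov m}_\rho$ is $d\nu^\rho_x(\xi)\,d\vol(x,u)$ where $\vol$ is the Liouville (Haar-times-Riemannian) volume on $O\M$, which is canonical and hence invariant under the horizontal flows generated by $\wh X$ and the $\wh H(u^i)$. The key reduction is that testing stationarity against $F\in C(O^sSM)$ amounts, after lifting, to computing $\E\big[\int F(\phi^\rho_{\xi,t}(u))\,d\wt{\ov m}_\rho\big]$ and showing it equals $\int F\,d\wt{\ov m}_\rho$; because the fiber directions are averaged by Lebesgue/Haar measure which the SDE preserves (the noise and drift vector fields $\wh X,\wh H(u^i)$ are divergence-free for $\vol$ in the frame-bundle directions by the usual stochastic-flow-of-isometries-type computation), the problem collapses to the stationarity of $m_\rho$ for the $\LL^\rho$-diffusion on $S\M$, which is exactly the defining property of $m_\rho$ established earlier in this section.

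Concretely I would proceed in steps. Step one: reduce to the infinitesimal generator. It suffices to check $\int \A F\,d\ov m_\rho=0$ for $F\in C^\infty(O^sSM)$, where $\A$ is the generator of the semigroup $\k_{\rho,t}$ acting on functions on $O^sSM$; this generator is the horizontal lift $\wh{\LL^\rho} := \rho\wh X + \sum_i \wh H(u^i)^2$ (Bochner's horizontal Laplacian along $\W^s$ plus the drift), which is a second-order operator subordinated to the lifted stable foliation. Step two: decompose $F$'s average using the product structure $d\ov m_\rho = d\nu^\rho_x(\xi)\,d(\text{Haar})(u\mid x)\,d\vol_g(x)$ on a flow-box, and use that $\wh H(u^i)^2$ and $\wh X$ differentiate only in the base $\M$-directions of the frame bundle (they are horizontal), so integrating out the Haar fiber variable $u$ converts $\wh{\LL^\rho}F$ into $\LL^\rho\bar F$ where $\bar F(x,\xi)=\int F(x,u,\xi)\,d(\text{Haar})(u)$. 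Step three: invoke that $m_\rho$ is $\LL^\rho$-stationary, i.e. $\int\LL^\rho\bar F\,dm_\rho=0$, to conclude. The continuity-in-$\xi$ and $\G$-equivariance (Theorem's properties 2), 3)) ensure everything descends to the quotient and that the measure $\ov m_\rho$ is well-defined and genuinely stationary, not merely infinitesimally.

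The main obstacle I anticipate is justifying the interchange "integrating out the Haar fiber of $F$ turns $\wh{\LL^\rho}$ into $\LL^\rho$ on the base", i.e. that the fiberwise average of a horizontal-Laplacian-plus-horizontal-drift of $F$ equals the corresponding operator applied to the fiberwise average. This is the standard but slightly delicate fact that the horizontal Laplacian on the frame bundle projects to the Laplace--Beltrami operator (the "Eells--Elworthy--Malliavin" lift), combined with the observation that the drift $\rho\ov X$ lifts horizontally to $\rho\wh X$ whose projection is again $\rho\ov X$; one must check the noise vector fields $\wh H(u^i)$ preserve the normalized Haar measure on $O(S_x\M)$-fibers (they do, being horizontal and hence tangent to the base directions while the fiber volume is parallel). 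A secondary, more bookkeeping obstacle is making the flow-box decomposition of $m_\rho$ rigorous — using that $m_\rho$'s conditional measures along $\W^s$ leaves are $\mathsf{K}^\rho(x,\xi)\,d\vol_{\wt g}(x)$ and tracking how this interacts with the Haar fibers — but this is routine given the description of $\wt m_\rho$ already in hand. Once the generator computation is done, upgrading from the generator to the time-$t$ kernel $\k_{\rho,t}$ is immediate by the semigroup property and a density argument.
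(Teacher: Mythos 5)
The paper states this proposition without proof (it is quoted from \cite{LS4}), so there is no in-text argument to compare against; judged on its own, your argument is correct and is the standard one. The load-bearing step is exactly the one you isolate: the generator $\A=\rho\wh X+\sum_i\wh H(u^i)^2$ commutes with the right $O(d)$-action on fibers (for the drift because horizontal lifts are right-invariant, for the sum of squares because $\sum_i\wh H(ue_i)^2$ is unchanged under $u\mapsto ua$, $a\in O(d)$), hence commutes with fiber-averaging $P$; combined with the projection property $\A(f\circ\pi)=(\LL^\rho f)\circ\pi$ this gives $P(\A F)=\LL^\rho\bar F$, and then $\int\A F\,d\ov m_\rho=\int\LL^\rho\bar F\,dm_\rho=0$ by stationarity of $m_\rho$, which upgrades to the time-$t$ kernel by the semigroup identity $\frac{d}{dt}\int P_tF\,d\ov m_\rho=\int\A P_tF\,d\ov m_\rho=0$. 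Two small corrections to your first paragraph: $\wh X$ is \emph{not} divergence-free for the Liouville volume of $O\M$ (its divergence is ${\rm Div}^s\,\ov X=-B\circ\pi\neq 0$); the correct and sufficient statement is ${\rm div}_{O\M}\wh Y=({\rm Div}^s Y)\circ\pi$ for any leafwise field $Y$, i.e.\ the vertical contribution to the divergence vanishes. Likewise, a horizontal field is transverse to the fibers, so ``preserving Haar measure on the fibers'' should be read as: its flow acts on the fiber coordinate by parallel transport, an isometry of $O(S_x\M)$, which is what makes the Haar-fiber extension consistent. Neither slip affects the argument in your second paragraph, which goes through the $O(d)$-equivariance route and is sound; the displayed formula for $\ov m_\rho$ then follows immediately from $d\wt m_\rho=d\nu^\rho_x(\xi)\,d\vol_{\wt g}(x)$ and the product structure of the Liouville volume on $O\M$.
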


\subsection{Entropy of a random transformation}
There is a notion of entropy for random transformations with a stationary measure (see \cite{KL} for details).

Let $X$ be a compact metric space and $D^0X$ the group of homeomorphisms of $X$. Let $\k$ be a probability measure on $D^0X$ and let $\ov m$ be a stationary measure for $\kappa.$ Let $\s $ be the shift on $(D^0X )^{\otimes \N}$, $\mathcal{K} = \k^{\otimes \N}$ the Bernoulli $\s$-invariant measure, $\ov \s$ the skew-product transformation on $(D^0X)^{\otimes \N} \x X$
\[ \ov \s (\un \phi, x) := (\s \un \phi , \phi _0 x ),\  {\forall} \un \phi = (\phi _0, \phi _1, \cdots ) \in (D^0X)^{\otimes \N}.\]
\begin{prop} Let $\ov m$ be a stationary measure for $\k$. Then, the measure $\mathcal{K}\times \ov m$  is $\ov \s$-invariant. \end{prop}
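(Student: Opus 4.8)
The plan is to verify the defining identity for invariance of $\mathcal K \times \ov m$ under $\ov\sigma$ by testing against an arbitrary bounded measurable function on $(D^0X)^{\otimes \N}\times X$ and reducing everything to the stationarity hypothesis on $\ov m$ together with the Bernoulli (product) structure of $\mathcal K$. Concretely, I would fix $F\in C\big((D^0X)^{\otimes\N}\times X\big)$ and compute $\int F\circ\ov\sigma\, d(\mathcal K\times\ov m)$. Writing $\un\phi=(\phi_0,\phi_1,\dots)$ and using Fubini to peel off the first coordinate $\phi_0$, the push-forward of $\mathcal K$ under the shift $\sigma$ is again $\mathcal K$ precisely because $\mathcal K=\k^{\otimes\N}$ is a Bernoulli measure, so the $\phi_0$-integration decouples and what remains is exactly $\int_{D^0X}\int_X F(\cdot,\phi_0 x)\, d\ov m(x)\, d\k(\phi_0)$ over the tail. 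The stationarity of $\ov m$ for $\k$, namely $\int_X G(x)\, d\ov m(x)=\int_{D^0X}\int_X G(\Phi x)\, d\k(\Phi)\, d\ov m(x)$ applied to the appropriate slice function, then collapses the inner double integral back to $\int_X(\cdots)\, d\ov m(x)$, yielding $\int F\, d(\mathcal K\times\ov m)$.

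The one point requiring a little care is the interplay of the two peelings: the shift on the sequence space moves $(\phi_1,\phi_2,\dots)$ into position $(\phi_0,\phi_1,\dots)$ while simultaneously $\phi_0$ acts on the $X$-coordinate, so the variable $\phi_0$ appears in $\ov\sigma(\un\phi,x)$ but is discarded by the shift. Because $\mathcal K$ is a product measure, the coordinate $\phi_0$ is independent of the tail $(\phi_1,\phi_2,\dots)$, so one may integrate out $\phi_0$ first (holding the tail and $x$ fixed), and it is at this step that stationarity of $\ov m$ enters in the form $\int_X\big(\int_{D^0X}H(\Phi x)\,d\k(\Phi)\big)\,d\ov m(x)=\int_X H(x)\,d\ov m(x)$ for $H(y):=$ the value of $F$ evaluated at (tail, $y$). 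The remaining integral over the tail against $\k^{\otimes\N}$ reassembles into $\int F\, d(\mathcal K\times \ov m)$ after renaming the tail coordinates, since $\k^{\otimes\N}$ is shift-invariant as a measure on $(D^0X)^{\otimes\N}$.

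This is essentially a one-line computation once the bookkeeping is set up, and the only mild obstacle is notational: being careful that Fubini is applicable (all functions bounded, all spaces standard Borel, $\k$ a Borel probability measure on the Polish group $D^0X$) and that the measurability of $(\un\phi,x)\mapsto(\sigma\un\phi,\phi_0x)$ is in hand, which follows from continuity of the group action and of the shift. I would phrase the final argument as the chain of equalities
\[
\int F\circ\ov\sigma\, d(\mathcal K\times\ov m)
=\int_{D^0X}\!\!\int_{(D^0X)^{\otimes\N}}\!\!\int_X F(\un\psi,\phi_0 x)\, d\ov m(x)\, d\mathcal K(\un\psi)\, d\k(\phi_0)
=\int_{(D^0X)^{\otimes\N}}\!\!\int_X F(\un\psi,x)\, d\ov m(x)\, d\mathcal K(\un\psi),
\]
where the first equality is Fubini plus shift-invariance of $\mathcal K$ and the second is stationarity of $\ov m$ applied slicewise in $\un\psi$. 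Since $F$ was arbitrary, $\mathcal K\times\ov m$ is $\ov\sigma$-invariant, which is the claim.
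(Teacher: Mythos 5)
Your argument is correct and complete: it is the standard verification (test against a bounded function, peel off $\phi_0$ using the Bernoulli product structure of $\mathcal K$, and apply the stationarity identity slicewise in the tail variable). The paper states this proposition without proof, as a standard fact from random dynamics (cf.\ \cite{KL}), and your computation is exactly the expected one.
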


For $\un \phi \in (D^0X)^{\otimes \N}, x \in X, \e >0, n\in \N$, define a {\it {random Bowen ball}} by \[ \ov B(\un \phi, x, \e, n ) := 
\{ y:\ y \in X, d(\phi _k \circ \cdots\circ \phi _0 y,  \phi _k \circ \cdots \circ \phi _0 x) < \e, \ \forall 0 \leq k <n \} \] and the {\it {relative  entropy}} $h_{\ov m}(\mathcal{K})$  as the $\mathcal{K}$-a.e. value of 
\[\sup_\e \int_X  \limsup _{n\to +\infty} -\frac{1}{n} \log \ov m ( \ov B(\un \phi, x, \e, n ) ) \, d\ov m(x) .\] 
With the preceding  notations, take $X= O^sSM,$  $ \k = \k_{\rho, t}$ for some $(\rho, t), \rho <V, 0<t,$ and the stationary measure $\ov m_\rho$. We want to estimate the relative entropy $h_{\ov m_\rho} ( \mathcal{K}_{\rho,t}) $. 
\begin{prop}[\cite{LS4}]\label{randomPesin} We have 
\[ h_{\ov m_\rho} (\mathcal{K}_{\rho,t}) \geq \int \log \left| {\rm {Det}} D_u \Phi \big|_{T_u O^sS\M } \right| \, d\k_{\rho,t} (\Phi)\, d\ov m_\rho (u) .\]\end{prop}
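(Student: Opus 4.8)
The plan is to reduce the statement to the classical Margulis--Ruelle inequality (more precisely, its reverse, Pesin's formula) for the skew-product $\ov\s$ on $(D^\infty(O^sSM))^{\otimes\N}\times O^sSM$ equipped with the invariant measure $\mathcal K_{\rho,t}\times\ov m_\rho$, and then to identify the relevant positive Lyapunov exponents (summed with multiplicity) with the expected log-Jacobian that appears on the right-hand side. Since $h_{\ov m_\rho}(\mathcal K_{\rho,t})$ is defined via the random Bowen balls $\ov B(\un\phi,x,\e,n)$ using the composed maps $\phi_k\circ\cdots\circ\phi_0$, this relative entropy is exactly the fiber entropy of $\ov\s$ over the Bernoulli base, and the abstract Pesin-type lower bound for random dynamical systems (the random version of the entropy formula, as in Kifer--Liu \cite{KL}, or Liu--Qian) gives
\[
h_{\ov m_\rho}(\mathcal K_{\rho,t}) \;\geq\; \int \sum_{\lambda_i>0} \lambda_i(\un\phi,u)\, d(\mathcal K_{\rho,t}\times\ov m_\rho),
\]
where the $\lambda_i$ are the Lyapunov exponents of the random cocycle $u\mapsto D_u(\phi_k\circ\cdots\circ\phi_0)$ on $T_uO^sS\M$. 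Here one uses that $\ov m_\rho$ is $\k_{\rho,t}$-stationary, that the derivative cocycle is log-integrable (this follows from Property 3) of the stochastic flow theorem together with the moment bounds for derivative SDEs on a compact quotient), and that the leafwise diffusion stays on the $d$-dimensional leaves $O\M\times\{\xi\}$ so the cocycle acts on a fixed-rank bundle.

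The second step is to replace the sum of positive exponents by the single quantity $\int \log|{\rm Det}\,D_u\Phi|_{T_uO^sS\M}|\,d\k_{\rho,t}\,d\ov m_\rho$. Because $O^sS\M$ fibers as $O\M\times\{\xi\}$ and each $\Phi=(\phi_\xi,\xi)$ acts only in the $O\M$-direction while preserving $\xi$, the derivative cocycle is genuinely finite-dimensional of dimension $\dim O\M$, and by the Furstenberg--Kifer / Oseledets theory for stationary products the total exponent $\sum_i\lambda_i$ (all exponents, with multiplicity) equals $\int\log|{\rm Det}\,D_u\Phi|\,d\k_{\rho,t}(\Phi)\,d\ov m_\rho(u)$. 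The inequality we want then follows from the trivial observation $\sum_{\lambda_i>0}\lambda_i\geq\sum_i\lambda_i$, which holds pointwise in $(\un\phi,u)$ provided the negative exponents really are negative — i.e. provided no exponent exceeds it; in fact $\sum_{\lambda_i>0}\lambda_i\geq\sum_{\text{all }i}\lambda_i$ is automatic regardless of signs since dropping the negative terms only increases the sum. Integrating and combining with the previous display gives the asserted bound.

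The main obstacle is the first step: establishing the random Pesin (Ruelle-type) lower bound $h_{\ov m_\rho}(\mathcal K_{\rho,t})\geq\int\sum_{\lambda_i>0}\lambda_i$ in our non-compact-leaf, foliated setting with genuine care about integrability and about the fact that $O^sSM$ is compact but the individual leaf maps $\phi_\xi$ are diffeomorphisms of the non-compact $O\M$. One has to check that the general machinery of \cite{KL} (or its smooth ergodic-theory refinement) applies: the maps $\Phi$ are $C^\infty$ with derivatives controlled uniformly in a stationary $L^1$-sense by Property 3), the measure $\mathcal K_{\rho,t}\times\ov m_\rho$ is $\ov\s$-invariant by the preceding Proposition, and the Lyapunov spectrum is well-defined $\ov m_\rho$-a.e. by Oseledets applied to the stationary cocycle. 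Granting these hypotheses — which is where most of the real work lies, and which is presumably done in detail in \cite{LS4} — the entropy lower bound is the standard consequence, and the rest is the elementary linear-algebra identity for determinants together with the fibered structure of $O^sS\M$. A cleaner alternative, avoiding exponents altogether, is to argue directly: for a typical $\un\phi$, the random Bowen ball $\ov B(\un\phi,u,\e,n)$ is, up to $\e$-dependent but $n$-independent distortion, comparable to the preimage under $\phi_{n-1}\circ\cdots\circ\phi_0$ of a ball of radius $\e$, so its $\ov m_\rho$-measure is of order $|{\rm Det}\,D_u(\phi_{n-1}\circ\cdots\circ\phi_0)|^{-1}$ times the conditional density factors; taking $-\tfrac1n\log$, using the stationarity to write the conditional densities as a cocycle with zero average, and applying the ergodic theorem to the additive functional $\log|{\rm Det}\,D\Phi|$ yields exactly the claimed integral — this mirrors the proof of Theorem \ref{Pesin} and is likely the route taken in \cite{LS4}.
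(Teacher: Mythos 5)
Your main route is essentially the paper's: the paper gives no proof beyond the remark following the proposition, which points exactly to your two steps --- a random Pesin-type lower bound $h_{\ov m_\rho}(\mathcal K_{\rho,t})\ge\int\sum_{\lambda_i>0}\lambda_i$ exploiting the absolute continuity of the conditionals of $\ov m_\rho$ on the leaves of $\ov{\W}^s$ (the Ma\~n\'e/Liu ingredients, carried out in \cite{LS4}), followed by the trivial inequality $\sum_{\lambda_i>0}\lambda_i\ge\sum_i\lambda_i$, whose integral is the mean leafwise log-Jacobian --- and the paper's observation that the inequality may be strict when negative exponents occur confirms this is the intended structure. One caution: your closing ``cleaner alternative'' (comparing the random Bowen ball to the preimage of an $\e$-ball with $n$-independent distortion) is not actually sound, since in contracted directions that preimage is a long thin set over which the Jacobian's distortion is not uniformly controlled in $n$; this is precisely the difficulty the Pesin/Ma\~n\'e machinery is there to handle.
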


Recall that $\ov m_\rho$ has  absolutely continuous conditional measures on the foliation $\ov {\W}^s$  defined by $(O\M \x \{\xi\})/\G.$ The proof uses ingredients from the proof of Pesin formula in the non-uniformly hyperbolic case (cf. \cite{Me}) and the non-invertible case (\cite{Li}, \cite{LiS}). Observe that, even if  $\Phi _{-1}\big|_{\ov {\W}^s} $  has only nonnegative  exponents, there might be negative exponents   for the random walk, and the inequality in  Proposition  \ref{randomPesin} might be strict.

\subsection{Continuity of the relative entropy}
We now indicate the main ideas of the proof of the following theorem
\begin{theo}[\cite {LS4}]\label{cont.} For $\rho <V$, let $m_\rho $ be the stationary measure for the diffusion on $SM$ with generator $\LL^\rho  = \D^s + \rho\ov X$.   Then, as $\rho \to -\infty ,$ $ m_\rho $ weak* converge to the Liouville measure $m_L$.\end{theo}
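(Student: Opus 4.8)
The strategy is to combine the entropy inequality of Theorem \ref{cuter}/Proposition \ref{rho<V} with the upper semicontinuity statement packaged in Proposition \ref{randomPesin}, and to pass to the limit $\rho\to-\infty$ in the variational characterization of $m_L$. Concretely, I would first renormalize: the diffusion generated by $\LL^\rho=\D^s+\rho\ov X$ for $|\rho|$ large is, after a time change, close to the deterministic flow along $-\ov X$, i.e.\ the geodesic flow run backward, so one expects $m_\rho$ to concentrate on measures that are nearly $\vf_t$-invariant. The key quantitative handle is the relative entropy $h_{\ov m_\rho}(\mathcal K_{\rho,t})$: by Proposition \ref{randomPesin} it bounds from below the integral of $\log|{\rm Det}\,D_u\Phi|$ over the stochastic flow, and the dominant contribution of this Jacobian along $\ov{\W}^s$ is governed, via \eqref{jacobian}, by $-\rho\int B\,dm_\rho$ up to terms controlled uniformly in $\rho$ by the Brownian part of the SDE \eqref{SDE}.

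**Key steps.** First, I would establish tightness: $\{m_\rho\}_{\rho\le -1}$ lives on the compact set $\mathcal M$ of probability measures on $SM$, so every sequence $\rho_n\to-\infty$ has a weak* convergent subsequence $m_{\rho_n}\to m_\infty$. Second, I would show $m_\infty$ is $\vf_t$-invariant: since the $\LL^\rho$-diffusion, after rescaling time by $|\rho|^{-1}$, converges (in the sense of generators, uniformly on $SM$) to the flow $\vf_{-t}$ plus a vanishing noise term — this is exactly the content of $\LL^\rho/|\rho| = -\ov X + |\rho|^{-1}\D^s$ — stationarity of $m_\rho$ for $\LL^\rho$ forces $\int \ov X F\,dm_\infty = \lim |\rho|^{-1}\int\D^s F\,dm_\rho = 0$ for all $F\in C^\infty(SM)$, i.e.\ $m_\infty$ is geodesic-flow invariant. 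Third, and this is the crux, I would prove the entropy bound $h_{m_\infty}(\vf_{-1}) \ge \int B\,dm_\infty$: starting from Proposition \ref{randomPesin}, divide by $|\rho|$, identify $\lim_{\rho\to-\infty}\frac1{|\rho|}h_{\ov m_\rho}(\mathcal K_{\rho,t})$ with $t$ times the Liouville-type entropy contribution of $m_\infty$ (the Brownian contribution to both the entropy and the Jacobian is $O(1)$ in $\rho$, hence negligible after dividing by $|\rho|$), and identify $\lim_{\rho\to-\infty}\frac1{|\rho|}\int\log|{\rm Det}\,D_u\Phi\,|_{T_uO^sS\M}|\,d\k_{\rho,t}d\ov m_\rho = t\int B\,dm_\infty$ using \eqref{jacobian} and the ergodic theorem along the flow direction that dominates the stochastic flow. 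Fourth, combine with Ruelle's bound (Corollary \ref{Ruelle}), $h_{m_\infty}(\vf_{-1})\le \int B\,dm_\infty$, to conclude $h_{m_\infty}(\vf_{-1}) = \int B\,dm_\infty$; by Theorem \ref{Pesin} the Liouville measure is characterized among equilibrium states as the unique measure achieving equality in the Ruelle inequality (equivalently, the unique equilibrium state for $-B$, since $P(-B)=0$), so $m_\infty = m_L$. Since the limit is the same for every subsequence, $m_\rho\to m_L$.

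**Main obstacle.** The delicate point is the third step: making rigorous the claim that, after dividing by $|\rho|$ and passing to the limit, the relative entropy $h_{\ov m_\rho}(\mathcal K_{\rho,t})$ and the Jacobian integral both converge to $t\int B\,dm_\infty$, and that this forces $m_\infty$ to saturate the Ruelle inequality. One has to control the noise terms in the SDE \eqref{SDE} uniformly — their contribution to $\log|{\rm Det}\,D_u\Phi|$ and to the random Bowen balls must be shown to be $O(1)$, not $O(|\rho|)$ — which requires the derivative SDE estimates from property 3) of Theorem \ref{El} and uniform (in $\rho$) moment bounds. One also needs an \emph{upper} semicontinuity statement: that $\limsup_{\rho\to-\infty}\frac1{|\rho|}h_{\ov m_\rho}(\mathcal K_{\rho,t})\le t\,h_{m_\infty}(\vf_{-1})$, which is where the geometry of negative curvature (Remark \ref{Manning-Lemma}) re-enters to relate the random Bowen balls of the flow of diffeomorphisms to the flow Bowen balls ${\bf B}(v,\e,r)$ of the geodesic flow. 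Assembling these two inequalities around the target identity $h_{m_\infty}(\vf_{-1})=\int B\,dm_\infty$, and invoking the uniqueness in Theorem \ref{Pesin}, completes the argument; the technical heart is entirely in the uniform-in-$\rho$ control of the stochastic flow.
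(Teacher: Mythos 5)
Your overall architecture coincides with the paper's: extract a weak* limit $m_\infty$, show it is invariant under the (reversed) geodesic flow, use Proposition \ref{randomPesin} to bound the relative entropy from below by the stable Jacobian integral, pass both quantities to the limit, and conclude from saturation of Ruelle's inequality, i.e.\ from the Bowen--Ruelle/Pesin characterization of $m_L$ as the unique invariant measure with $h_m(\vf_{-1})=\int\log\bigl|{\rm Det}\,D_v\vf_{-1}|_{T_vW^s(v)}\bigr|\,dm$. One remark on normalization: the paper does not divide by $|\rho|$ at fixed $t$; it sets $\k_\rho=\k_{\rho,-1/\rho}$, so that the SDE becomes $du_t=-\wh X(u_t)+\frac{-1}{\rho}\sum_i\wh H(u^i_t)\circ dB^i_t$ and the random maps converge to the deterministic time-one reverse frame flow $\Phi_{-1}$ with \emph{uniformly bounded} leafwise $C^r$ norms $C_r(\rho)$. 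With your normalization (fixed $t$, divide entropies by $|\rho|$) the time-$t$ maps have derivative norms of order $e^{c|\rho|t}$, so the uniform-in-$\rho$ derivative estimates you invoke would not hold; the time rescaling has to come first, after which no division by $|\rho|$ is needed.

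The genuine gap is in your proposed mechanism for the upper semicontinuity $\limsup_{\rho\to-\infty}h_{\ov m_\rho}(\mathcal{K}_{\rho})\le h_{\ov m}(\Phi_{-1})$, which you correctly identify as the crux but attribute to negative curvature via Remark \ref{Manning-Lemma}. That lemma compares Bowen balls of the limiting geodesic flow at different times; it cannot control the defect of upper semicontinuity of entropy, which concerns the entropy generated \emph{inside} an $\e$-ball by the perturbed random dynamics. Metric entropy is in general not upper semicontinuous under simultaneous perturbation of the map and the measure, even for deterministic $C^r$ systems, and no soft geometric argument repairs this. The paper's tool is the tail entropy: Proposition \ref{Bowen} (a random version of Bowen's entropy-expansiveness inequality) reduces the defect to the local quantity $h_{\rho,loc}(\e)$, and Proposition \ref{Yomdin} (a random Yomdin--Buzzi theorem) gives $\lim_{\e\to0}\limsup_{\rho\to-\infty}h_{\rho,loc}(\e)=0$, using the $C^\infty$ regularity of the stochastic flow and the uniform bounds on $C_r(\rho)$ for every $r$ --- not the curvature. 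Without this ingredient your third step does not close.
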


\begin{cor} $\lim\limits _{\rho \to -\infty } \int B \,dm_\rho \; = \; \int B\, dm_L \; = \; H.$ \end{cor}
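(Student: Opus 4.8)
The corollary asserts $\lim_{\rho\to-\infty}\int B\,dm_\rho = \int B\,dm_L = H$, and the plan is to deduce it essentially for free from Theorem \ref{cont.}, which says $m_\rho \to m_L$ weak* as $\rho\to-\infty$. The only point requiring care is that $\int B\,dm$ is a genuine continuous functional on the space of probability measures on $SM$: since $B$ is H\"older continuous on the compact space $SM$ (this is recorded right after \eqref{jacobian}), in particular $B\in C(SM)$, the map $m\mapsto \int B\,dm$ is by definition continuous for the weak* topology. Hence weak* convergence $m_\rho\to m_L$ immediately gives $\int B\,dm_\rho\to\int B\,dm_L$. The identification $\int B\,dm_L = H$ is nothing but the definition \eqref{entropy-Liouville}, where $H$ was introduced precisely as $\int B\,dm_L$ (equivalently $h_{m_L}(\vf_1)$, by Theorem \ref{Pesin}).

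Concretely, the proof I would write is: by Theorem \ref{cont.}, $m_\rho$ converges weak* to $m_L$ as $\rho\to-\infty$. The function $B$ is continuous on the compact manifold $SM$, so $m\mapsto\int_{SM} B\,dm$ is weak* continuous; therefore $\lim_{\rho\to-\infty}\int B\,dm_\rho = \int B\,dm_L$. By \eqref{entropy-Liouville}, $\int B\,dm_L = H$, which completes the proof.

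There is essentially no obstacle here — the content is entirely in Theorem \ref{cont.}, which is quoted from \cite{LS4} and whose proof strategy (via upper semicontinuity of the relative entropy of the stochastic flow as $\rho\to-\infty$, Proposition \ref{randomPesin}, combined with the variational characterizations of $m_L$) is sketched in the surrounding text. If one wanted a self-contained remark, the only thing worth flagging is that weak* convergence of probability measures on a compact metric space does upgrade to convergence of integrals against \emph{every} continuous function, so no uniform integrability issue arises; $B$ being merely H\"older rather than smooth is irrelevant. Thus the corollary is an immediate consequence of Theorem \ref{cont.} together with the definition of $H$.
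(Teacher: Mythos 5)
Your proof is correct and matches the paper's (implicit) treatment: the corollary is an immediate consequence of Theorem \ref{cont.}, since $B$ is continuous on the compact space $SM$, so weak* convergence $m_\rho \to m_L$ gives $\int B\,dm_\rho \to \int B\,dm_L$, and $\int B\,dm_L = H$ by \eqref{entropy-Liouville}. The proof printed after the corollary in the paper is really the sketch of Theorem \ref{cont.} itself, whose content you correctly identify as the only substantive input.
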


\begin{proof}\label{cont.2} Set $\k_\rho = \k _{\rho, \frac{-1}{\rho}}.$ 
We first observe that as $\rho \to -\infty $, $\k_\rho$ weak* converge on $D^\infty (O^sSM)$ to the Dirac measure on the reverse frame flow $\Phi _{-1}$. Moreover, for any $r \in \N, r\geq 1$,
\[ \limsup _{\rho \to -\infty } C_r (\rho) < +\infty, {\textrm { where }} C_r(\rho ):= \int \|\Phi \|_{D^r(O^sSM)} \, d\k_\rho (\Phi ),\]
where $\|\cdot\|_{D^r(O^sSM)}$ is the supremum of leafwise $C^r$ norm.
Indeed, by definition, $\k_\rho$ is the distribution of the time one of the stochastic flow associated to the Stratonovich SDE
\[ du_t =-  \wh X(u_t) + \frac{-1}{\rho} \sum_{i=1}^d \wh H (u^i_t) \circ dB^i_t .\]
When $\rho \to -\infty $, the SDE converge to the ODE on  $O^sSM, \; du_t =-  \wh X(u_t) $. The convergence, and the control on $C_r$, follow by continuity of the solutions  in  $D^\infty (O^sSM)$.

Let then $m$ be a weak* limit of the measures $m_\rho $ as $\rho \to -\infty $,  $\ov m$ its extension to $O^sSM$ by the Lebesgue measure on the fibers. The measure $m$ is $\vf_{-1}$ invariant, $\ov m $ is the weak* limit of the measures $ \ov m_\rho $ and $\ov m $ is $\Phi _{-1}$ invariant. Moreover, $h_m (\vf _{-1}) = h_{\ov m} (\Phi _{-1})$ (this is a compact isometric extension) and  
\begin{align*}   \int \log \left| {\textrm {Det}} D_v \vf _{-1}\big|_{T_vW^s(v)} \right| \, dm (v) &= \int \log \left| {\textrm {Det}} D_u \Phi _{-1}\big|_{T_u O^sS\M} \right| \, d\ov m (u) \\ &= \lim\limits _{\rho \to -\infty } \int \log \left| {\textrm {Det}} D_u \Phi \big|_{T_u O^sS\M} \right| 
d\ov m_\rho (u) \, d\k_\rho (\Phi ).\end{align*}  By \cite{BR},  the Liouville measure is the only $\vf_{-1}$ invariant  measure with \[h_m (\vf _{-1}) = \int \log \left| {\textrm {Det}} D_v \vf _{-1}\big|_{T_vW^s(v)} \right| \, dm (v).\] To conclude the theorem, using Proposition \ref{randomPesin}, it suffices to show 
\[  h_{\ov m} (\Phi _{-1}) \geq \limsup _{\rho \to -\infty} h_{\ov m_\rho} ({\mathcal{K}}_{\rho}).\]
This will follow from the properties of the {\it{topological relative conditional entropy}} in the next subsection.
\end{proof}

\subsection{Topological relative conditional entropy}
The following definition extends the definition of Bowen (\cite{B}) to the random case, following Kifer-Yomdin (\cite{KY}) and Cowieson-Young (\cite{CY}). 

For $\e>0$ and $\un \phi \in (D^0X)^{\otimes \N}, x \in X, \tau >0, n\in \Bbb N$,  set $r(\e , \un \phi, x, \tau, n)$ for the smallest number of random $\ov B(\un \phi, y , \tau, n)$ balls needed to cover $\ov B(\un \phi, x, \e, n) $ and 
\[ h_{loc} (\e , \un \phi) := \sup _x \lim\limits_{\tau \to 0} \limsup _{n\to +\infty} \frac{1}{n} \log r(\e , \un \phi, x, \tau, n).\]
The function $\un \phi \mapsto  h_{loc} (\e , \un \phi) $ is $\s$-invariant. For $X = O^sSM$, write $h_{\rho,loc} (\e)$ for the ${\mathcal{K}}_\rho$-essential value of $ h_{loc} (\e , \un \phi).$
The conclusion follows from the two following facts (cf. \cite{LS4}, Section 4). 
\begin{prop}\label{Bowen} For all $\e >0$,  \[ h_{\ov m}(\Phi _{-1} )  \geq \limsup_{\rho \to -\infty }  h_{\ov m_\rho} ({\mathcal K}_{\rho}) -  \limsup_{\rho \to -\infty } h_{\rho,loc} (\e).\]\end{prop}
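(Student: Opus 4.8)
The plan is to read the claimed inequality, after rearranging, as $\limsup_{\rho\to-\infty} h_{\ov m_\rho}(\mathcal{K}_\rho)\le h_{\ov m}(\Phi_{-1})+\limsup_{\rho\to-\infty} h_{\rho,loc}(\e)$, i.e.\ as an upper semicontinuity statement for the relative entropy in the zero-noise limit $\rho\to-\infty$, with a defect controlled by the topological relative conditional entropy. I would follow the scheme of Bowen \cite{B} for the topological conditional entropy, together with its random analogue of Kifer--Yomdin \cite{KY} and Cowieson--Young \cite{CY}: (i) rewrite all the entropies as exponential growth rates of covering numbers of (random) Bowen balls; (ii) use the definition of $h_{\rho,loc}(\e)$ to pass from the fixed scale $\e$ down to arbitrarily fine scales at an exponential cost $e^{n h_{\rho,loc}(\e)}$; (iii) compare, at the \emph{fixed} scale $\e$, the random covering numbers of the diffusion generated by $\k_\rho$ with the deterministic covering numbers of $\Phi_{-1}$, using that $\k_\rho\to\delta_{\Phi_{-1}}$.

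\textbf{Steps (i)--(ii).} By the random version of Katok's entropy formula, for $\mathcal{K}_\rho$-a.e.\ $\un\phi$ and every $\delta\in(0,1)$ one has $h_{\ov m_\rho}(\mathcal{K}_\rho)=\lim_{\tau\to0}\limsup_n\frac1n\log N_\rho(\un\phi,\tau,\delta,n)$, where $N_\rho(\un\phi,\tau,\delta,n)$ is the least number of balls $\ov B(\un\phi,y,\tau,n)$ whose union has $\ov m_\rho$-measure $\ge 1-\delta$; let $H_\rho(\e):=\lim_{\delta\to0}\limsup_n\frac1n\log M_\rho(\un\phi,\e,\delta,n)$ be the corresponding exponent when the balls have the \emph{fixed} radius $\e$. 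Similarly Katok's formula for the deterministic system gives $h_{\ov m}(\Phi_{-1})=\lim_{\delta\to0}\limsup_n\frac1n\log N^{\Phi_{-1}}_{\ov m}(\e',\delta,n)$ for every scale $\e'>0$. Now, given a cover of a set of $\ov m_\rho$-measure $\ge 1-\delta$ by $(\e,n)$-random Bowen balls, refine each $\ov B(\un\phi,x,\e,n)$ into $r(\e,\un\phi,x,\tau,n)$ balls $\ov B(\un\phi,y,\tau,n)$; since $h_{loc}(\e,\cdot)$ is $\s$-invariant and $\mathcal{K}_\rho$ is Bernoulli, for $\mathcal{K}_\rho$-a.e.\ $\un\phi$ one has $r(\e,\un\phi,x,\tau,n)\le e^{n(h_{\rho,loc}(\e)+o(1))}$ uniformly in $x$ once $\tau$ is small and $n$ large. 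Hence $N_\rho(\un\phi,\tau,\delta,n)\le M_\rho(\un\phi,\e,\delta,n)\,e^{n(h_{\rho,loc}(\e)+o(1))}$, and sending $n\to\infty$, then $\tau\to0$ and $\delta\to0$, gives $h_{\ov m_\rho}(\mathcal{K}_\rho)\le H_\rho(\e)+h_{\rho,loc}(\e)$.

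\textbf{Step (iii).} It then remains to show $\limsup_{\rho\to-\infty} H_\rho(\e)\le h_{\ov m}(\Phi_{-1})$. Here I would fix a block length $\ell\in\N$ and use that $\k_\rho\to\delta_{\Phi_{-1}}$ in $D^\infty(O^sSM)$ with uniformly bounded leafwise $C^r$-norms (property 3) of the stochastic flow, together with $\limsup_{\rho\to-\infty}\int\|\Phi\|_{D^r}\,d\k_\rho(\Phi)<\infty$, as in the proof of Theorem \ref{cont.}) to guarantee that, \emph{for this fixed $\ell$}, the composition $\phi_{k+\ell-1}\circ\cdots\circ\phi_k$ is $C^0$-within $\e$ of $\Phi_{-1}^\ell$ with $\mathcal{K}_\rho$-probability tending to $1$ as $\rho\to-\infty$. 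Splitting time $[0,\ell m]$ into $m$ blocks of length $\ell$, on each ``good'' block the random $(\e,\ell)$-Bowen ball sits inside a $(2\e,\ell)$-Bowen ball of $\Phi_{-1}$ around the relevant point of the random orbit; the ``bad'' blocks have asymptotic density $\to0$ as $\rho\to-\infty$ (strong law of large numbers for the i.i.d.\ increments of the stochastic flow), and they — together with the scale mismatch at the $m$ block interfaces, again absorbed by an $h_{\rho,loc}$-type estimate — cost only a factor $e^{c_\rho\,\ell m}$ with $c_\rho\to0$ as $\rho\to-\infty$; finally $\ov m_\rho$ is replaced by $\ov m$ up to weak$^*$ errors. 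Letting $\rho\to-\infty$, then $m\to\infty$, then $\ell\to\infty$ and $\delta\to0$, and invoking Katok's formula for $(\Phi_{-1},\ov m)$, yields $\limsup_{\rho\to-\infty} H_\rho(\e)\le h_{\ov m}(\Phi_{-1})$. Combined with Step (ii) and a final $\limsup$ over $\rho$, this is the assertion.

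The hard part is exactly the interchange of the limits $n\to\infty$ and $\rho\to-\infty$ in Step (iii): over $n$ steps the trajectories generated by $\k_\rho$ drift away from those of $\Phi_{-1}$ by an amount that grows exponentially in $n$, so one cannot compare random and deterministic Bowen balls over the full time horizon and is forced into the blocking argument above. The delicate bookkeeping is to bound, uniformly in $\rho$, both the contribution of the bad blocks and the loss at the $m$ block interfaces, while keeping the comparison \emph{measure-theoretic} — so that one recovers $h_{\ov m}(\Phi_{-1})$ and not the (possibly strictly larger) topological entropy of $\Phi_{-1}$. It is precisely for this that one needs the stochastic flow to depend continuously on $\rho$ in $D^\infty(O^sSM)$ and to have uniformly bounded $C^r$-norms as $\rho\to-\infty$, so that the zero-noise limit is nondegenerate and the per-block comparison is controlled by data ($\ell$ and $\e$) not involving $n$.
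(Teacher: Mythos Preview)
The paper does not prove this proposition; it only records it as one of ``two facts (cf.\ [LS4], Section 4)'' and notes that the deterministic case is Bowen's theorem. So there is no in-paper proof to compare against, but one can compare with the standard Cowieson--Young/Misiurewicz scheme that [LS4] follows.

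Your Steps (i)--(ii) are correct: this is precisely Bowen's conditional-entropy inequality in its random incarnation, and it yields $h_{\ov m_\rho}(\mathcal K_\rho)\le H_\rho(\e)+h_{\rho,loc}(\e)$ for the ``entropy at scale $\e$'' $H_\rho(\e)$.

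The problem is Step (iii). Your blocking argument via Bowen balls is both unnecessary and, as written, incomplete. On a ``good'' block you obtain that the random $(\e,\ell)$-ball sits in a deterministic $(2\e,\ell)$-ball \emph{centered at the random orbit point} $z_k=\phi_{k\ell-1}\circ\cdots\circ\phi_0\,x$. Concatenating over $m$ blocks you are then counting \emph{itineraries} of the random orbit through a fixed deterministic $(2\e,\ell)$-cover. The number of such itineraries is naturally bounded by $(\#\text{cover})^m$, which gives the topological entropy of $\Phi_{-1}$, not $h_{\ov m}(\Phi_{-1})$; to bring this down to the measure-theoretic entropy you would need a Shannon--McMillan--Breiman statement for the \emph{random} orbit visiting \emph{deterministic} cells, and you have not supplied one (stationarity of $\ov m_\rho$ tells you the one-time marginal of $z_k$, not the joint statistics of $(z_0,\dots,z_{m-1})$ relative to $\Phi_{-1}$). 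Your order of limits ``$\rho\to-\infty$, then $m\to\infty$, then $\ell\to\infty$'' also hides an unjustified interchange, since $H_\rho(\e)$ already contains a $\limsup_n$.

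The clean route --- and the one used in \cite{CY} and in [LS4] --- avoids Bowen balls in Step (iii) altogether. Fix a finite partition $\alpha$ with $\mathrm{diam}\,\alpha<\e$ and $\ov m(\partial\alpha)=0$. The random Bowen inequality (your Step (ii)) gives $h_{\ov m_\rho}(\mathcal K_\rho)\le h_{\ov m_\rho}(\mathcal K_\rho,\alpha)+h_{\rho,loc}(\e)$. Now use \emph{subadditivity} of the random partition entropy:
\[
h_{\ov m_\rho}(\mathcal K_\rho,\alpha)\;=\;\inf_{n\ge1}\frac1n\int H_{\ov m_\rho}\Big(\bigvee_{i=0}^{n-1}(\phi_{i-1}\circ\cdots\circ\phi_0)^{-1}\alpha\Big)\,d\mathcal K_\rho(\un\phi)\;\le\;\frac1n\int H_{\ov m_\rho}(\alpha^n_{\un\phi})\,d\mathcal K_\rho(\un\phi)
\]
for every fixed $n$. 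For that fixed $n$, let $\rho\to-\infty$: since $\kappa_\rho\to\delta_{\Phi_{-1}}$ in $D^\infty$, each partial composition $\phi_{i-1}\circ\cdots\circ\phi_0$ converges in $C^0$ to $\Phi_{-1}^i$ in $\mathcal K_\rho$-probability, so $\alpha^n_{\un\phi}\to\alpha^n_{\Phi_{-1}}$; combined with $\ov m_\rho\to\ov m$ weak$^*$ and $\ov m(\partial\alpha)=0$, one gets $\int H_{\ov m_\rho}(\alpha^n_{\un\phi})\,d\mathcal K_\rho\to H_{\ov m}(\alpha^n_{\Phi_{-1}})$. Hence $\limsup_{\rho\to-\infty} h_{\ov m_\rho}(\mathcal K_\rho,\alpha)\le \frac1n H_{\ov m}(\alpha^n_{\Phi_{-1}})$ for all $n$, i.e.\ $\le h_{\ov m}(\Phi_{-1},\alpha)\le h_{\ov m}(\Phi_{-1})$. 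This is exactly your desired $\limsup_\rho H_\rho(\e)\le h_{\ov m}(\Phi_{-1})$, obtained without any blocking, bad-block bookkeeping, or interface corrections; the $m\to\infty$ step in your scheme is absorbed by the single use of subadditivity, and the argument stays measure-theoretic throughout.
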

\begin{prop}\label{Yomdin} There is a constant $C$ such that, for all $r \in \N$, $r\geq 1,$  there is $\rho _r $ such that, for $\rho < \rho_r, $ \[ \lim\limits _{\e \to 0} \sup_{\rho <\rho_r} h_{\rho,loc} (\e)\leq \frac{C}{r} \, C_1,\]
where $C_1 = \sup_{\rho <\rho_1}  \int \|\Phi \|_{D^1(O^sSM)} \, d\k_\rho (\Phi ). $ \end{prop}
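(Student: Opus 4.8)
The plan is to derive Proposition~\ref{Yomdin} from the Yomdin--Gromov $C^r$-reparametrization estimate, in the random form of \cite{KY} and \cite{CY}, fed with the first-order bound $C_1$ and the higher-order bounds $C_r(\rho)$ already used in the proof of Theorem~\ref{cont.}. First I would reduce to a leafwise statement on the compact manifold $O^sSM$. Each $\Phi=(\phi_\xi,\xi)$ in the support of $\k_\rho$ fixes the $\pp\M$-coordinate and acts on the leaves of $\ov\W^s$ by the $C^\infty$ diffeomorphisms $\phi_\xi$, continuously in $\xi$. Since $\Phi$ does not move $\xi$, a random Bowen ball $\ov B(\un\phi,u,\e,n)$ is comparable, with multiplicative constants independent of $n$, to a product of a leafwise random Bowen ball (in a chart modelled on a ball of the leaf dimension $q:=d+\tfrac{d(d-1)}{2}=\dim O\M$) with a transverse $\e$-ball in $\pp\M$ whose size does not grow with $n$; the latter needs only $\asymp(\e/\tau)^{d-1}$ balls of radius $\tau$, contributing $0$ to $\tfrac1n\log r$. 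Thus $h_{\rho,loc}(\e)$ equals the leafwise local entropy of the i.i.d.\ cocycle $(\phi_k)_{k\ge0}$, $\phi_k$ distributed as $\k_\rho$. (These reductions, including the control of the H\"older dependence on the transversal, are in \cite{LS4}, Section~4.)

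Next I would apply the Yomdin--Gromov reparametrization lemma to the leafwise $n$-fold composition $F_n:=\phi_{n-1}\circ\cdots\circ\phi_0$: there is a combinatorial constant $c(q,r)$ such that, for each $\un\phi$, the $\e$-ball can be covered by at most
\[ c(q,r)^{\,n}\,\tau^{-q}\prod_{k=0}^{n-1}\big(\max(1,\|D\phi_k\|_\infty)\big)^{q/r} \]
images of $C^r$-reparametrizations, on each of which the leafwise $C^r$-norm of $F_n$ is so small that the image lies in a single $\tau$-Bowen ball; here the higher leafwise $C^r$-data of the $\phi_k$ enters only through $c(q,r)$ and through the (bounded) number of initial steps needed for the reparametrization to saturate, not through the exponential rate, and this is where one uses $\sup_{\rho<\rho_r}C_r(\rho)<\infty$, valid below some $\rho_r$ since $\limsup_{\rho\to-\infty}C_r(\rho)<\infty$ (proof of Theorem~\ref{cont.}). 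Taking $\tfrac1n\log$, then $\limsup_n$, then $\tau\to0$, then the supremum over base points, and using $\log^+(ab)\le\log^+a+\log^+b$ together with $\|DF_n\|_\infty\le\prod_k\|D\phi_k\|_\infty$, yields, for all sufficiently small $\e>0$ and all $\rho<\rho_r$,
\[ h_{\rho,loc}(\e)\;\le\;\frac{q}{r}\,\limsup_{n\to\infty}\frac1n\sum_{k=0}^{n-1}\log^+\|D\phi_k\|_\infty . \]

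Since the $\phi_k$ are i.i.d.\ with law $\k_\rho$ and $\log^+\|D\Phi\|_\infty\le\|\Phi\|_{D^1(O^sSM)}$ is $\k_\rho$-integrable (because $C_1(\rho)<\infty$ for $\rho<\rho_1$), the strong law of large numbers and $\log^+t\le t$ give, $\mathcal{K}_\rho$-a.s.,
\[ \limsup_{n\to\infty}\frac1n\sum_{k=0}^{n-1}\log^+\|D\phi_k\|_\infty \;=\;\int\log^+\|D\Phi\|_\infty\,d\k_\rho(\Phi)\;\le\;\int\|\Phi\|_{D^1(O^sSM)}\,d\k_\rho = C_1(\rho)\;\le\;C_1, \]
the last step once $\rho<\rho_1$, so we arrange $\rho_r\le\rho_1$. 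Hence $\sup_{\rho<\rho_r}h_{\rho,loc}(\e)\le\tfrac{q}{r}C_1$ for all small $\e$, and since $\e\mapsto h_{\rho,loc}(\e)$ is non-decreasing, letting $\e\to0$ gives Proposition~\ref{Yomdin} with $C=q=\dim O\M=d+\tfrac{d(d-1)}{2}$.

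The main obstacle is the reparametrization step: one must show that the number of $C^r$-reparametrization pieces grows exponentially only at the first-order rate $\tfrac{q}{r}\sum\log^+\|D\phi_k\|_\infty$, with the higher-order $C^r$-norms absorbed into the $r$-dependent prefactor and the saturation delay, and do this uniformly in $n$ so that the $\tau$-dependence sits in an $n$-independent constant and disappears under $\tfrac1n\log$. This is precisely the content of the Yomdin--Gromov machinery in its random adaptation (\cite{KY}, \cite{CY}), and it has to be dovetailed with the leafwise reduction of the first step, since $O^sSM$ is only foliated and it is the leafwise random compositions, rather than iterates of a single global diffeomorphism, that must be controlled.
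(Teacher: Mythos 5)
Your route is the one the paper intends. The paper itself gives no proof of Proposition~\ref{Yomdin}: it only attributes the deterministic case to Yomdin and Buzzi, points to Kifer--Yomdin and Cowieson--Young for the random adaptation, and refers to \cite{LS4}, Section~4 for details. Your scheme --- reduce to a leafwise statement on the $q$-dimensional leaves of $\ov\W^s$, apply the Yomdin--Gromov reparametrization to the random compositions, and conclude by the law of large numbers using $\log^+t\le t$ and $\sup_{\rho<\rho_1}C_1(\rho)=C_1$ --- is exactly that scheme, and you correctly identify where the uniform bounds $\sup_{\rho<\rho_r}C_r(\rho)<\infty$ enter (the $\e$-threshold and the prefactor, uniformly over $\rho<\rho_r$, which is what makes the order of limits in the statement legitimate).

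There is, however, one concrete gap in the derivation as written. From your covering bound $c(q,r)^{\,n}\,\tau^{-q}\prod_{k}\bigl(\max(1,\|D\phi_k\|_\infty)\bigr)^{q/r}$, taking $\tfrac1n\log$ yields $h_{\rho,loc}(\e)\le \log c(q,r)+\tfrac{q}{r}\limsup_n\tfrac1n\sum_k\log^+\|D\phi_k\|_\infty$: the per-step combinatorial constant \emph{does} contribute to the exponential rate, contrary to what you assert two lines later, and since $\log c(q,r)$ does not tend to $0$ as $r\to\infty$ the resulting bound is not of the form $\tfrac{C}{r}C_1$. The standard repair is the blocking trick: apply the reparametrization estimate to the $m$-fold block compositions, which are i.i.d.\ with law $\k_{\rho,mt}=\k_{\rho,t}^{\ast m}$ by the flow property, use that the local entropy of the blocked cocycle is $m$ times that of the original (up to distortion over $m$ steps controlled by $C_1$), and let $m\to\infty$ to kill the term $\tfrac1m\log c(q,r)$; the $\e$-threshold then depends on $m$, $r$ and $\sup_{\rho<\rho_r}C_r(\rho)$ but not on the individual $\rho$, so the conclusion survives the $\sup_{\rho<\rho_r}$ inside the limit. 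The other point where your argument is asserted rather than proved is the leafwise reduction: points of the $\e$-Bowen ball with distinct boundary coordinates $\xi'\ne\xi$ evolve under \emph{different} diffeomorphisms $\phi_{\xi'}$, so the random Bowen ball is not literally a product, and one must use the uniform H\"older dependence of $\xi\mapsto\phi^\rho_{\xi,t}$ to check that this transverse variation does not add to the rate; you defer this to \cite{LS4}, which is also all the paper does.
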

Proposition \ref{Bowen} in the deterministic case is due to Bowen (\cite{B}). Proposition \ref{Yomdin}  in the deterministic case is a famous result of Yomdin (\cite{Yo}, \cite{Y2}) and Buzzi (\cite{Buz}).  By proposition \ref{Yomdin}, since $r$ is arbitrary, $  \lim _{\e \to 0} \limsup_{\rho \to - \infty} h_{\rho,loc} (\e) =0.$  Proposition \ref{Bowen} then yields the claimed inequality.

\subsection{Conclusion. Katok's conjecture}
 Let $(M, g)$ be a $C^ \infty $  $d$-dimensional Riemannian manifold with negative curvature. We introduced in Sections 1 and 2  the numbers $H,$ the entropy of the Liouville measure
for the geodesic flow,  $V,$ the topological entropy of the geodesic flow,  and   the function $B$ on $SM$. The function $B$ is constant if, and only if $(M,g)$ is a locally symmetric space (Theorem \ref{BCG}). Using thermodynamical formalism, $H\leq V$ and  if $H=V,$ there exists a continuous function $F$ on $SM$, $C^1$ along the trajectories of the flow, such that 
$  B = V - \frac{\pp }{\pp t } F\circ \vf _t \big|_{t=0}$ (see Theorem \ref{cute}). Katok's conjecture (see surveys \cite{L5} and \cite{Yu} for  history of this topic) is that this can only happen when $(M,g)$ is a locally symmetric space, that is, when $B$ is constant on $SM$. This was proven by Katok (\cite{K2}) in dimension 2 and more generally if $g$ is conformally equivalent to a locally symmetric $g_0$. It was also proven by   Flaminio  (\cite{Fl}) in a $C^2$ neighborhood of a constant curvature metric $g_0$.    Here, we introduced a family of measures $m_\rho, \rho \leq V,$ such that 
$\int B \, dm_V = V$ and for $\rho <V, \int B \, dm_\rho \leq V$ with equality only in the case of locally symmetric spaces (Theorem \ref{cuter}). Finally, in the $C^\infty$ case, we also show that $\lim _{\rho \to -\infty} \int B \, dm_\rho = H$ (Corollary \ref{cont.2}).


\begin{thebibliography}{99}


\bibitem[\bf AS]{AS} M. Anderson and R. Schoen, Positive harmonic functions on complete manifolds of negative curvature, \emph{Ann. of Math. (2)}  {\bf 121} (1985), 429--461.
\bibitem[\bf Anc]{Anc} A.  Ancona, Negatively curved manifolds, elliptic
operators and the Martin boundary, \emph {Ann. of  Math. (2)} {\bf 125}
(1987), 495--536.
\bibitem[\bf Ano]{Ano} D. V. Anosov, Geodesic flow on closed
Riemannian manifolds of negative curvature, \emph{Proc. Steklov Inst.
Math.} {\bf 90} (1967).



\bibitem[\bf BCG]{BCG} G. Besson, G. Courtois and S. Gallot, Entropies et rigidit\'es des espaces localement sym\'etriques de courbure strictement n\'egative, \emph{GAFA}, {\bf 5} (1995), 731--799.
\bibitem[{\bf BFL}]{BFL} Y. Benoist, P. Foulon and F. Labourie,  Flots d'Anosov \`a distributions stables et instables diff\'erentiables, {\em  J. Amer. Math. Soc. }  {\bf 5} (1992), 33--74.
\bibitem[\bf Bor]{Bl} A. Borel, Compact Clifford-Klein forms of symmetric spaces, \emph {Topology } {\bf 2} (1963),  111--122.
\bibitem[\bf Bow1]{B} R. Bowen, Entropy-expansive maps,  \emph{Trans. Amer. Math. Soc.}  {\bf 164} (1972), 323--331.

\bibitem[\bf Bow2]{B2}  R. Bowen, \emph{Equilibrium states and the ergodic theory of Anosov diffeomorphisms,} Lecture Notes in Math.,  {\bf 470}, \emph{Springer-Verlag, Berlin-New York,} 1975. 

\bibitem[\bf BM]{BM} R. Bowen and B. Marcus, Unique ergodicity of the stable foliation,  \emph{Israel J. Math.} {\bf 26} (1977),  43--67.


\bibitem[\bf BR]{BR} R. Bowen and D. Ruelle,  The ergodic theory of Axiom A flows,  \emph{Invent. Math. } {\bf 29} (1975), 389--449.
\bibitem[\bf BK]{BK}  M. Brin and A. Katok, On local entropy, in \emph{ Geometric Dynamics,} 30--38, 
Lecture Notes in Math., {\bf 1007,} \emph{Springer, Berlin,} 1983.
\bibitem[\bf Bur]{Bu} M. Burger, Horocycle flow on  geometrically finite surfaces, \emph { Duke Math. J.} {\bf  61} (1990),  779--803.
\bibitem[\bf Buz]{Buz} J. Buzzi, Intrinsic ergodicity of smooth interval maps, \emph{Israel J. Math.} {\bf 100} (1997), 125--161.

\bibitem[\bf CE]{CE}  A. P. Carverhill  and K. D.  Elworthy,  Lyapunov exponents for a stochastic analogue of the geodesic flow,  \emph{Trans. Amer. Math. Soc.}  {\bf 295} (1986),  85--105.
\bibitem[\bf CY]{CY} W. Cowieson and L.-S. Young, SRB measures as zero-noise limits, \emph{Ergod. Th. \& Dynam. Sys.}  {\bf 25} (2005), 1115--1138.
\bibitem[\bf D]{D}  M. Deraux, A negatively curved K\"ahler manifold not covered by the ball, \emph {Invent. Math.} {\bf 160} (2005),  501--525.
\bibitem[\bf El]{El} K. D. Elworthy, \emph{Stochastic Differential Equations on Manifolds},  London Mathematical Society Lecture Note Series, {\bf 70.} \emph{Cambridge University Press, Cambridge-New York,} 1982.
\bibitem[\bf Fl]{Fl} L. Flaminio, Local entropy rigidity for hyperbolic manifolds, \emph{Comm. Anal. Geom. }  {\bf 3}  (1995),  555--596. 
\bibitem[\bf FJ]{FJ} F. T.  Farrell and L. E. Jones, Negatively curved manifolds with exotic smooth structures, \emph {J. Amer. Math. Soc.} {\bf 2} (1989), 899--908.
\bibitem[\bf FL]{FL} P. Foulon and F. Labourie, Sur les vari{\'e}t{\'e}s compactes asymptotiquement harmoniques,  \emph {Invent. Math.} {\bf 109} (1992), 97--111.

\bibitem[\bf Ga]{Ga} L. Garnett, Foliations, the ergodic theorem and Brownian motion, \emph{J. Funct.
Anal.} {\bf 51} (1983), 285--311.


\bibitem[\bf GT]{GT}  M. Gromov and W. Thurston,  Pinching constants for hyperbolic manifolds, \emph {Invent. Math.} {\bf 89} (1987), 1--12.
\bibitem[\bf Gu]{Gu} Y. Guivarc'h, Sur la loi des grands nombres et le rayon spectral
d'une marche al\'{e}atoire, \emph{Ast\'{e}risque} {\bf 74} (1980),
47--98.
\bibitem[\bf H1]{H1} U. Hamenst\"{a}dt, A new description of the Bowen-Margulis measure, \emph{Ergod. Th. \& Dynam. Sys.}   {\bf 9} (1989), 455--464.
\bibitem[\bf H2]{H} U. Hamenst\"{a}dt, An explicit description of harmonic
  measure, \emph{Math. Z.} {\bf{205}} (1990), 287--299. 

\bibitem[\bf H3]{H2} U. Hamenst\"{a}dt, Harmonic measures for compact
negatively curved manifolds, \emph{Acta Math.}  {\bf 178} (1997),
39--107.
\bibitem[\bf Kai1]{K1} V. A. Kaimanovich, Brownian motion and harmonic
functions on covering manifolds. An entropic approach, \emph {Soviet Math. Dokl.} {\bf 33} (1986),  812--816.
\bibitem[\bf Kai2]{Ka2}  V. A. Kaimanovich, Brownian motion on foliations: entropy, invariant measures, mixing, \emph{Funct. Anal. Appl.}  {\bf 22} (1988), 326--328.\bibitem[\bf Kai3]{Ka3}  V. A. Kaimanovich, Invariant measures of the geodesic flow and measures at  infinity on negatively curved manifolds, \emph {Ann. Inst. H. Poincar\'e Phys. Th\'eor.} {\bf 53} (1990),  361--393.

\bibitem[\bf Kat1]{Kk} A. Katok, Lyapunov exponents, entropy and periodic orbits for diffeomorphisms, \emph {Publications Math{\'e}matiques de l'IH{\'E}S} {\bf 51} (1980),  137--173.
\bibitem[\bf Kat2]{K2} A. Katok, Entropy and closed geodesics, \emph{Ergod. Th. \& Dynam. Sys.} {\bf 2} (1982), 339--365. 
\bibitem[\bf KL]{KL} Y. Kifer and P. D. Liu, Random dynamics, \emph{Handbook of dynamical susytems} {\bf 1} (2006), 379--499. 
\bibitem[\bf KY]{KY} Y. Kifer and Y. Yomdin, Volume growth and topological entropy for random transformations, in \emph{Dynamical Systems,}    361--373, Lecture Notes in Math., {\bf 1342}, \emph{Springer, Berlin,} 1988.
\bibitem[\bf Kn]{Kn} G. Knieper,  Spherical means on compact Riemannian manifolds of negative curvature, \emph {Differential Geometry and its Applications} {\bf 4} (1994), 361--390.



\bibitem[\bf L1]{L2} F. Ledrappier, Ergodic properties of Brownian motion on covers of compact negatively-curve manifolds, \emph{Bol. Soc. Brasil. Mat.} {\bf 19} (1988), 115--140.

\bibitem[\bf L2]{L5} F. Ledrappier,  Applications of dynamics to compact manifolds of negative curvature, \emph{Proceedings of the International Congress of Mathematicians,}  Vol. 1, 2 (Z\"{u}rich, 1994), 1195--1202, Birkh\"{a}user, Basel, 1995.


\bibitem[\bf L3]{L1} F. Ledrappier, Structure au bord des vari\'et\'es \`a  courbure n\'egative, \emph{S\'eminaire de  th\'eorie spectrale et g\'eom\'etrie,  Grenoble 1994--1995, }(1995), 97--122.



\bibitem[\bf LL]{LL} F. Ledrappier and S. Lim, Local limit Theorem in negative curvature,   \emph {arXiv preprint arXiv:1503.04156v5}.
\bibitem[\bf LS1]{LS2} F. Ledrappier and L. Shu,  Differentiability of the stochastic entropy for compact negatively curved spaces under conformal changes  \emph {Ann. Inst. Fourier}, {\bf 67} (2017), 1115--1183. Erratum,  \emph {Ann. Inst. Fourier}. 
\bibitem[\bf LS2]{LS4} F. Ledrappier and L. Shu,  A family of stable diffusions, \emph {arXiv preprint arXiv:1812.09708}.
\bibitem[\bf Li]{Li}  P. D. Liu, Pesin's entropy formula for endomorphisms, \emph{Nagoya Math. J.}  {\bf 150} (1998), 197--209.
\bibitem[\bf LiS]{LiS} P. D. Liu and L. Shu,  Absolute continuity of hyperbolic invariant measures for endomorphisms, \emph{Nonlinearity} {\bf 24} (2011),  1596--1611.



\bibitem[\bf Ma\~n]{Me} R. Ma\~n\'e, A proof of Pesin's formula,   \emph{Ergod. Th. \& Dynam. Sys.} {\bf1} (1981),  95--102. Errata \emph{Ergod. Th. \& Dynam. Sys.}  {\bf 3} (1983), 159--160. 
\bibitem[\bf Man]{Mg} A. Manning, Topological entropy for geodesic flow, \emph {Ann. of Math. (2)}   {\bf 110} (1979),  567--573.
\bibitem[\bf Mar]{Ms}  G. A. Margulis, Certain measures associated with U-flows, \emph {Funct. Anal. Appl.} {\bf 4} (1970),  55--67.
\bibitem[\bf MS]{MS} G. D. Mostow and Y. T. Siu, A compact K\"ahler surface of negative curvature not covered by the ball, \emph {Ann. Math.} {\bf 112} (1980), 321--360.
\bibitem[\bf PP]{PP} W. Parry and M. Pollicott, Zeta functions and the periodic orbit structure of hyperbolic dynamics, \emph {Ast\'erisque} {\bf 187--188} (1976).
\bibitem[\bf P]{P} S. J. Patterson, The limit set of a Fuchsian group, \emph {Acta Math.} {\bf 136} (1976), 241--273. 

\bibitem[\bf PPS]{PPS} F. Paulin, M. Pollicott and B. Schapira, Equilibrium states in negative curvature, \emph{Ast\'erisque } {\bf 373} (2015).

\bibitem[\bf Ro]{Ro} T. Roblin,  Ergodicit{\'e} et {\'e}quidistribution en courbure n{\'e}gative,  \emph {M\'emoires Soc. Math. France,} {\bf 95} (2003).
\bibitem[\bf Ru]{Re} D. Ruelle, An inequality for the entropy of differentiable maps, \emph {Bul. Braz. Math. Soc.} {\bf 9} (1978),  83--87.
\bibitem[\bf Se]{Se} A. Selberg, On discontinuous groups in higher dimensional symmetric spaces, in \emph { Contributions to Function Theory} Tata IFR, Bombay (1960), 147--164.

\bibitem[\bf Su]{Su} D. Sullivan, The density at infinity of a discrete group of hyperbolic motions, \emph {Pub. Math. I.H.\'E.S.} {\bf 50} (1979),171--202.
\bibitem[\bf Yo1]{Yo} Y. Yomdin, Volume growth and entropy, \emph{Israel J. Math.}  {\bf 57} (1987), 285--300.
\bibitem[\bf Yo2]{Y2} Y. Yomdin, $C^k$-resolution of semialgebraic mappings. Addendum to: ``Volume growth and entropy'', \emph{Israel J. Math.} {\bf  57}  (1987), 301--317. 



\bibitem[\bf Yu]{Yu} C. Yue,  Rigidity and dynamics around manifolds of negative curvature, 
\emph{Math. Res. Lett.}  {\bf 1} (1994), 123--147. 


\end{thebibliography}
\end{document}